\documentclass[a4paper]{amsart}
\usepackage[utf8]{inputenc}
\usepackage[T1]{fontenc}
\usepackage{lmodern}
\usepackage{amssymb}
\usepackage[all]{xy}
\usepackage{nicefrac,mathtools,enumitem}
\usepackage{microtype}
\usepackage[pdftitle={Non-Hausdorff Symmetries of C*-algebras},
  pdfauthor={Alcides Buss, Ralf Meyer, Chenchang Zhu},
  pdfsubject={Mathematics}
]{hyperref}
\usepackage[lite]{amsrefs}
\newcommand*{\MRref}[2]{ \href{http://www.ams.org/mathscinet-getitem?mr=#1}{MR #1}}
\newcommand*{\arxiv}[1]{\href{http://www.arxiv.org/abs/#1}{arXiv: #1}}

\numberwithin{equation}{section}
\theoremstyle{plain}
\newtheorem{theorem}[equation]{Theorem}
\newtheorem{lemma}[equation]{Lemma}
\newtheorem{proposition}[equation]{Proposition}
\newtheorem{corollary}[equation]{Corollary}
\theoremstyle{definition}
\newtheorem{definition}[equation]{Definition}
\theoremstyle{remark}
\newtheorem{remark}[equation]{Remark}
\newtheorem{example}[equation]{Example}

\DeclareMathOperator{\source}{s}
\DeclareMathOperator{\target}{r}
\DeclareMathOperator{\Aut}{Aut}
\DeclareMathOperator{\Inn}{Inn}
\DeclareMathOperator{\Out}{Out}

\newcommand*{\Rotc}[1]{A_#1}
\newcommand*{\nb}{\nobreakdash}
\newcommand*{\Star}{$^*$\nobreakdash-}

\newcommand*{\C}{\mathbb C}
\newcommand*{\Z}{\mathbb Z}
\newcommand*{\R}{\mathbb R}
\newcommand*{\N}{\mathbb N}

\newcommand*{\Torus}{\mathbb T}
\newcommand*{\Bound}{\mathbb B}
\newcommand*{\Comp}{\mathbb K}
\newcommand*{\Mat}{\mathbb M}

\newcommand*{\Cst}{\textup C^*}
\newcommand*{\Cont}{\textup C}
\newcommand*{\Mult}{\mathcal M}

\newcommand*{\K}{\textup K}

\newcommand*{\ima}{\textup i}
\newcommand*{\Eul}{\textup e}
\newcommand*{\Id}{\textup{Id}}
\newcommand*{\Ad}{\textup{Ad}}
\newcommand*{\leftsub}{\textup l}
\newcommand*{\rightsub}{\textup r}

\newcommand*{\Cat}{\mathcal C}
\newcommand*{\cC}{\mathcal C}
\newcommand*{\Cattwo}{\mathcal C}
\newcommand*{\Hils}{\mathcal H}
\newcommand*{\Hilm}[1][H]{\mathcal #1}

\newcommand*{\U}{\mathcal U}
\newcommand*{\A}{\mathcal A}
\newcommand*{\actA}{\alpha}
\newcommand*{\acm}{c}
\newcommand*{\tcm}{\partial}
\newcommand*{\twc}{v}

\newcommand*{\horizprod}{\mathbin{\cdot_\textup h}}
\newcommand*{\vertprod}{\mathbin{\cdot_\textup v}}

\newcommand*{\defeq}{\mathrel{\vcentcolon=}}
\newcommand*{\congto}{\xrightarrow\sim}

\newcommand*{\norm}[1]{\lVert#1\rVert}
\newcommand*{\braket}[2]{\left\langle#1\!\mid\!#2\right\rangle}

\newcommand*{\Csttwocat}{\mathfrak C^*(2)}
\newcommand*{\Corrcat}{\mathfrak{Corr}(2)}

\begin{document}
\title[A higher category approach to twisted actions on $\Cst$-algebras]{A higher category approach to\\twisted actions on $\Cst$-algebras}

\author{Alcides Buss}
\email{alcides@mtm.ufsc.br}
\address{Departamento de Matem\'atica\\
 Universidade Federal de Santa Catarina\\
 88.040-900 Florian\'opolis-SC\\
 Brazil}

\author{Ralf Meyer}
\email{rameyer@uni-math.gwdg.de}
\address{Mathematisches Institut and Courant Research Centre ``Higher Order Structures''\\
 Georg-August-Universit\"at G\"ottingen\\
 Bunsenstra\ss e 3--5\\
 37073 G\"ottingen\\
 Germany}

\author{Chenchang Zhu}
\email{zhu@uni-math.gwdg.de}
\address{Courant Research Centre ``Higher Order Structures''\\
 Georg-August-Universit\"at G\"ottingen\\
 Bunsenstra\ss e 3--5\\
 37073 G\"ottingen\\
 Germany}

\begin{abstract}
  \(\Cst\)\nb-algebras form a \(2\)\nb-category with \Star{}homomorphisms or correspondences as morphisms and unitary intertwiners as \(2\)\nb-morphisms.  We use this structure to define weak actions of \(2\)\nb-categories, weakly equivariant maps between weak actions, and modifications between weakly equivariant maps.  In the group case, we identify the resulting notions with known ones, including Busby--Smith twisted actions and equivalence of such actions, covariant representations, and saturated Fell bundles.  For \(2\)\nb-groups, weak actions combine twists in the sense of Green and Busby--Smith.

  The Packer--Raeburn Stabilisation Trick implies that all Busby--Smith twisted group actions of locally compact groups are Morita equivalent to classical group actions.  We generalise this to actions of strict \(2\)\nb-groupoids.
\end{abstract}
\subjclass[2000]{46L55, 18D05}
\keywords{Green twisted action, Busby--Smith twisted action, Fell bundle, bicategory, \(2\)\nb-category, correspondence, Morita equivalence, Packer--Raeburn Stabilisation Trick}
\thanks{Supported by the German Research Foundation (Deutsche Forschungsgemeinschaft (DFG)) through the Institutional Strategy of the University of G\"ottingen.}
\maketitle

\tableofcontents

\section{Introduction}
\label{sec:introduction}

An automorphism of a \(\Cst\)\nb-algebra~\(A\) is called \emph{inner} if it is of the form \(\Ad_u\colon a\mapsto uau^*\) for some unitary multiplier~\(u\) of~\(A\).  Inner automorphisms act trivially on \(\K\)\nb-theory and all other interesting invariants for \(\Cst\)\nb-algebras; more precisely, they act trivially on a functor~\(F\) if the corner embedding \(F(A)\to F\bigl(\Mat_2(A)\bigr)\) is invertible for all~\(A\).  If two automorphisms \(\alpha_1\) and~\(\alpha_2\) of a \(\Cst\)\nb-algebra~\(A\) differ by an inner automorphism, \(\alpha_2=\Ad_u\circ\alpha_1\), then their crossed product \(\Cst\)\nb-algebras are isomorphic.

While these statements suggest that we may simply ignore inner automorphisms, this is false for representations of more general groups.  For instance, any automorphism of the \(\Cst\)\nb-algebra~\(\Comp\) of compact operators on a separable Hilbert space is inner, so that any two group actions on~\(\Comp\) agree up to inner automorphisms.  Group actions by \Star{}automorphisms on~\(\Comp(\Hils)\) correspond to projective representations of the group on the underlying Hilbert space~\(\Hils\).  But the stabilised rotation algebras \(\Rotc\vartheta\otimes\Comp\) for \(\vartheta\in[0,1]\) can be realised as crossed products with \(\Comp(\Hils)\) for a family of projective representations of~\(\Z^2\) on~\(\Hils\).  Hence there are actions of~\(\Z^2\) on~\(\Comp(\Hils)\) for which the crossed products are not Morita equivalent, although all automorphisms of \(\Comp(\Hils)\) are inner.  The correct way to formulate the insignificance of inner automorphisms is the well-known \emph{exterior equivalence} of group actions.  It differs from the naive notion by a cocycle condition for the twisting unitaries.

Similarly, a group homomorphism to the \emph{outer automorphism group}
\[
\Out(A) \defeq \Aut(A)\bigm/\Inn(A)
\]
does not qualify as a group action on~\(A\) because no crossed product can be defined in such a situation (of course, \(\Aut(A)\) denotes the automorphism group of~\(A\) and \(\Inn(A)\) the normal subgroup of inner automorphisms).  The correct way to define group actions up to inner automorphisms are the \emph{twisted actions} in the sense of Busby and Smith~\cite{Busby-Smith:Representations_twisted_group}.

Non-Hausdorff symmetry groups of \(\Cst\)\nb-algebras are related to inner automorphisms in~\cite{Buss-Meyer-Zhu:Non-Hausdorff_symmetries}. For example, the rotation algebra~\(\Rotc\vartheta\) plays the role of the algebra of functions on the non-commutative space~\(\Torus/\lambda^\Z\) for \(\lambda\defeq \exp(2\pi\ima\vartheta)\).  Since this quotient group acts on itself by multiplication, we expect~\(\Rotc\vartheta\) to carry a canonical action of~\(\Torus/\lambda^\Z\).  Since~\(\lambda^\Z\) is dense in~\(\Torus\) for irrational~\(\vartheta\), such an action cannot exist in the classical sense.  Instead, we have an action of~\(\Torus\) that restricts to an inner action on~\(\lambda^\Z\), so that we get a natural homomorphism \(\Torus/\lambda^\Z \to \Out(\Rotc\vartheta)\).  We already remarked above that, even for classical groups, a homomorphism to \(\Out(\Rotc\vartheta)\) is not the right way to define group actions up to inner automorphisms.  Instead, we need a pair of group homomorphisms from~\(\Torus\) to~\(\Aut(\Rotc\vartheta)\) and from~\(\Z\) to the group of unitary multipliers~\(\U\Mult(\Rotc\vartheta)\) satisfying two compatibility conditions familiar from the twisted group actions of Philip Green~\cite{Green:Local_twisted}.

In this article, we interpret twisted group actions and related notions from the point of view of \(2\)\nb-category theory.  Weak \(2\)\nb-categories, which are also called bicategories, were used in an operator algebraic context in \cites{Brouwer:Bicategorical, Landsman:Bicategories} in order to study Morita equivalence of \(\Cst\)\nb-algebras and von Neumann algebras.  The authors implicitly used \(2\)\nb-categories in~\cite{Buss-Meyer-Zhu:Non-Hausdorff_symmetries} in order to describe non-Hausdorff symmetry groups of \(\Cst\)\nb-algebras.

In~\cite{Buss-Meyer-Zhu:Non-Hausdorff_symmetries} non-Hausdorff symmetry groups are described by crossed modules.  Crossed modules are already considered in~\cite{Raeburn-Sims-Williams:Twisted_obstructions} in order to study some obstruction problems for twisted actions cohomologically.  Since crossed modules are equivalent to strict \(2\)\nb-groups, their appearance in \cites{Buss-Meyer-Zhu:Non-Hausdorff_symmetries, Raeburn-Sims-Williams:Twisted_obstructions} is of fundamental importance.  The general theory of \(2\)\nb-categories explains various definitions related to twisted group actions, and it even provides some insights for ordinary group actions.

The theory extends to continuous actions of locally compact topological \(2\)\nb-cat\-egories as well, but we do not discuss the relevant continuity conditions in the introduction.

The first example of a strict \(2\)\nb-category is the \(2\)\nb-category of categories.  Its \(2\)\nb-category structure encodes the familiar properties of categories (objects), functors (arrows between objects), and natural transformations (arrows between arrows).  One of the basic ideas of category theory is that two functors that are related by an invertible natural transformation should be considered equivalent.  This leads to the notion of equivalence of categories.

If two groups are isomorphic, then it is usually important to remember the isomorphism between them.  Similarly, if two functors are naturally isomorphic, we should remember the natural isomorphism between them and require suitable coherence laws.  An example of this are products, say, of sets.  When we construct products explicitly using the set theory axioms, then \((X\times Y)\times Z\) and \(X\times (Y\times Z)\) are different but naturally isomorphic, and so are \(X\times Y\) and \(Y\times X\).  But the existence of such isomorphisms is not enough: we must select such natural isomorphisms and then check certain coherence laws.  Depending on the coherence laws we use, we arrive at the notion of a symmetric or a braided monoidal category (see~\cite{Baez-Dolan:Categorification}).

The idea to weaken equality of morphisms to isomorphism of morphisms works in any \(2\)\nb-category and is very familiar from homotopy theory, where continuous maps are considered equivalent if they are homotopic.  We will apply this idea to \(\Cst\)\nb-algebras.  There are several possible ways to turn \(\Cst\)\nb-algebras into a \(2\)\nb-category.  We will use two definitions here, one using \Star{}representations and unitary intertwiners, the other using correspondences and unitary intertwiners.  By definition, a \emph{\Star{}representation} from~\(A\) to~\(B\) is a non-degenerate \Star{}homomorphism from~\(A\) to the multiplier algebra of~\(B\), and a \emph{correspondence} from~\(A\) to~\(B\) is a non-degenerate \Star{}homomorphism from~\(A\) to the algebra of adjointable operators on some Hilbert \(B\)\nb-module.  A unitary intertwiner between two \Star{}representations \(f,g\colon A\rightrightarrows\Mult(B)\) is a unitary multiplier~\(u\) of~\(B\) with
\[
uf(a)=g(a)u\qquad\text{for all \(a\in A\).}
\]
Unitary intertwiners between correspondences are \(A\)\nb-linear unitary operators between the underlying Hilbert \(B\)\nb-modules.

Whereas \(\Cst\)\nb-algebras with \Star{}representations and unitary intertwiners form a strict \(2\)\nb-category, \(\Cst\)\nb-algebras with correspondences and unitary intertwiners only form a weak \(2\)\nb-category (bicategory) because the (tensor) product of correspondences is only associative up to isomorphism and only has units up to isomorphism.

Recall that categories form a \(2\)\nb-category with categories as objects, functors as \(1\)\nb-morphisms, and natural transformations as \(2\)\nb-morphisms.  Similarly, \(2\)\nb-categories form a \(3\)\nb-category.  Therefore, given a \(2\)\nb-category, or just a group viewed as a \(2\)\nb-category, there are three levels of morphisms from this \(2\)\nb-category to the two \(2\)\nb-categories of \(\Cst\)\nb-algebras mentioned above.

First, there are functors between \(2\)\nb-categories.  A strict functor from a group to the \(2\)\nb-category of \(\Cst\)\nb-algebras is a group action.  When we weaken equalities of morphisms to isomorphisms, we get \emph{Busby--Smith twisted actions}.

Secondly, there are natural transformations between such functors.  The strict natural transformations between group actions are exactly the \emph{equivariant maps}.  The weak version of this combines equivariant maps with \emph{exterior equivalence}: a weakly equivariant map between two weak actions of a group is a strictly equivariant map to an exterior equivalent weak action.  Moreover, if the target algebra carries the trivial weak group action, then a weakly equivariant map is the same thing as a \emph{covariant representation}.  Thus weak natural transformations contain equivariant maps, exterior equivalence, and covariant representations as special cases.

Thirdly, there are modifications between weakly equivariant maps.  These generalise unitary intertwining operators between covariant representations to the setting where the target algebra carries a non-trivial action and provide the correct notion of equivalence for weakly equivariant maps between weak actions.

General \(2\)\nb-category theory (see~\cite{Leinster:Basic_Bicategories}) explains the various coherence laws or covariance conditions that appear in the above definitions.  Once you understand \(2\)\nb-categories, you no longer have to memorise the cocycle condition for Busby--Smith twisted actions or the definition of exterior equivalence of such twisted actions because you can derive these conditions in a minute.

Even more, the general theory applies equally well to actions of more general \(2\)\nb-categories.  For strict \(2\)\nb-groupoids, represented by crossed modules, the notion of a strict action is a straightforward generalisation of Philip Green's twisted actions (see~\cite{Buss-Meyer-Zhu:Non-Hausdorff_symmetries}).  Weak actions of crossed modules consistently combine the twists of Green and Busby--Smith.  The relevant coherence laws are already rather complicated, but are produced automatically by higher category theory.

Using the weak \(2\)\nb-category of \(\Cst\)\nb-algebras with correspondences and unitary intertwiners, we get yet another setup to study actions of groups or \(2\)\nb-categories on \(\Cst\)\nb-algebras.  Since the target category is not strict, we only have the weak notions of action, equivariant map, and equivalence here.  We show that weak actions of a group are equivalent to \emph{saturated Fell bundles} over this group.  We interpret weakly equivariant maps as correspondences of Fell bundles.  Modifications are isomorphisms of correspondences.

The interesting feature of actions by correspondences is that we can transport them under Morita equivalences.  That is, if~\(G\) acts by correspondences on~\(A\) and~\(A\) is Morita equivalent to~\(B\), then we get an induced action of~\(G\) by correspondences on~\(B\).  Actions of~\(\Z\) of this kind were already studied in~\cite{Abadie-Eilers-Exel:Morita_bimodules} -- free groups are the only case where we do not need higher categories to define weak actions.

Recall that a Morita equivalence between two \(\sigma\)\nb-unital \(\Cst\)\nb-algebras implies a \Star{}isomorphism between their \(\Cst\)\nb-stabilisations.  A more precise version of this statement shows that a weak action by correspondences of a strict \(2\)\nb-groupoid on a \(\sigma\)\nb-unital \(\Cst\)\nb-algebra becomes a weak action by \Star{}isomorphisms after \(\Cst\)\nb-stabilisation.  A basic fact about Busby--Smith twisted group actions is the Packer--Raeburn Stabilisation Trick, which shows that Busby--Smith twisted actions are Morita equivalent to untwisted group actions.  Combining both results, we find that a group action by correspondences is equivalent to a classical group action on the \(\Cst\)\nb-stabilisation.  As a consequence, a group action by correspondences or, equivalently, a saturated Fell bundle with given unit fibre is exactly the structure that is inherited by \(\Cst\)\nb-algebras that are Morita equivalent to \(\Cst\)\nb-algebras with a classical group action.

The same argument applies to crossed modules of locally compact topological groupoids: any topological weak action by correspondences of such a crossed module is equivalent to a strict action of the crossed module.  Such strictification results are rather unusual in higher category theory.  For most target categories, weak actions are genuinely more general than strict actions.

\section{\texorpdfstring{$2$}{2}-categories and crossed modules}
\label{sec:two_categories}

Here we review strict and weak \(2\)\nb-categories.  Our motivating examples are the \(2\)\nb-categories of \(\Cst\)\nb-algebras \(\Csttwocat\) and \(\Corrcat\) based on \Star{}representations or correspondences, respectively, and unitary intertwiners.

\subsection{Strict \texorpdfstring{$2$}{2}-categories}

The quick definition of a strict \(2\)\nb-category describes it as a category enriched over categories.  That is, for two objects \(x\) and~\(y\) of our first order category, we have a \emph{category} of morphisms from~\(x\) to~\(y\), and the composition of morphisms lifts to a bifunctor between these morphism categories.  This definition is similar to the definition of a pre-additive category: the latter is nothing but a category enriched over Abelian groups.

We now write down more explicitly what a category enriched over categories is (see also~\cite{Baez:Introduction_n}).  Having categories of morphisms boils down to having \emph{arrows} between objects \(x\to y\), also called \(1\)\nb-morphisms, and arrows between arrows
\[
\xymatrix@1@C+2em{
  y &
  \ar@/_1pc/[l]_{f}_{}="0"
  \ar@/^1pc/[l]^{g}^{}="1"
  \ar@{=>}"0";"1"^{a}
  x,
}
\]
which are called \(2\)\nb-morphisms or \emph{bigons} because of their shape.  We prefer to call them bigons because there are other ways to describe \(2\)\nb-categories that use triangles or even more complicated shapes as \(2\)\nb-morphisms (see~\cite{Baez:Introduction_n}).

The category structure on the space of arrows \(x\to y\) provides a \emph{vertical composition} of bigons
\[
\xymatrix@1@C+1.5em{
  y &
  \ar@/_1.5pc/[l]_{f}_{}="0"
  \ar[l]|{\phantom{|}g\phantom{|}}^{}="1"_{}="1b"
  \ar@/^1.5pc/[l]^{h}^{}="2"
  \ar@{=>} "0";"1"^{a}
  \ar@{=>} "1b";"2"^{b}
  x}
\quad\mapsto\quad
\xymatrix@1@C+2em{
  y & \ar@/_1pc/[l]_{f}_{}="4"
  \ar@/^1pc/[l]^{h}^{}="5"
  \ar@{=>}"4";"5"^{b\vertprod a}
  x.
}
\]
The composition functor between the arrow categories provides a composition of arrows
\[
\xymatrix@1{z & \ar[l]_{f} y & \ar[l]_{g} x} \quad\mapsto\quad
\xymatrix@1{z & \ar[l]_{f g} x}
\]
and a \emph{horizontal composition} of bigons
\[
\xymatrix@1@C+2em{
  z &
  \ar@/_1pc/[l]_{f_1}_{}="0"
  \ar@/^1pc/[l]^{g_1}^{}="1"
  \ar@{=>}"0";"1"^{a}
  y &
  \ar@/_1pc/[l]_{f_2}_{}="2"
  \ar@/^1pc/[l]^{g_2}^{}="3"
  \ar@{=>}"2";"3"^{b}
  x}
\quad\mapsto\quad
\xymatrix@1@C+2em{
  z & \ar@/_1pc/[l]_{f_1f_2}_{}="4"
  \ar@/^1pc/[l]^{g_1g_2}^{}="5"
  \ar@{=>}"4";"5"^{a\horizprod b}
  x.
}
\]
These three compositions of arrows and bigons are associative and unital, with the same unit bigons for the vertical and horizontal product.  Furthermore, the horizontal and vertical products commute: given a diagram
\[
\xymatrix@1@C+2em{
  z &
  \ar@/_1.5pc/[l]_{f_1}_{}="0"
  \ar[l]|{g_1}^{}="1"_{}="1b"
  \ar@/^1.5pc/[l]^{h_1}^{}="2"
  \ar@{=>} "0";"1"^{a_1}
  \ar@{=>} "1b";"2"^{b_1}
  y &
  \ar@/_1.5pc/[l]_{f_2}_{}="3"
  \ar[l]|{g_2}^{}="4"_{}="4b"
  \ar@/^1.5pc/[l]^{h_2}^{}="5"
  \ar@{=>} "3";"4"^{a_2}
  \ar@{=>} "4b";"5"^{b_2}
  x,
}
\]
composing first vertically and then horizontally or the other way around produces the same bigon \(f_1f_2\Rightarrow h_1h_2\).  The three composition operations and the associativity, unitality, and interchange conditions above only make explicit the structure present in a category enriched over categories.

In any strict \(2\)\nb-category, the objects and arrows form an ordinary category.

\begin{example}
  \label{exa:categories_twocat}
  Categories form a strict \(2\)\nb-category with small categories as objects, functors between categories as arrows, and natural transformations between functors as bigons.  The composition of arrows is the composition of functors and the vertical composition of bigons is the composition of natural transformations.  The horizontal composition of bigons yields the canonical natural transformation
  \[
  \Phi_{1,G_2(A)}\circ F_1(\Phi_{2,A}) = G_1(\Phi_{2,F_2(A)})\circ \Phi_{1,F_2(A)}\colon
  F_1\bigl(F_2(A)\bigr)\to G_1\bigl(G_2(A)\bigr)
  \]
  for the diagram
  \[
  \xymatrix@C+2em{
    \Cat_1 &
    \ar@/_1pc/[l]_{F_1}_{}="0"
    \ar@/^1pc/[l]^{G_1}^{}="1"
    \ar@{=>}"0";"1"^{\Phi_1}
    \Cat_2 &
    \ar@/_1pc/[l]_{F_2}_{}="2"
    \ar@/^1pc/[l]^{G_2}^{}="3"
    \ar@{=>}"2";"3"^{\Phi_2}
    \Cat_3.
  }
  \]
\end{example}

\subsubsection{The strict \texorpdfstring{$2$}{2}-category \texorpdfstring{$\Csttwocat$}{C*(2)} of \texorpdfstring{$\Cst$}{C*}-algebras}

Now we describe a strict \(2\)\nb-category with \(\Cst\)\nb-algebras as objects, non-degenerate \Star{}homomorphisms \(A\to\Mult(B)\) as arrows, and unitary intertwiners between such \Star{}homomorphisms as bigons.

Let \(A\) and~\(B\) be \(\Cst\)\nb-algebras and let \(\Mult(A)\) and~\(\Mult(B)\) be their multiplier algebras, equipped with the strict topologies.  By definition, an arrow from~\(A\) to~\(B\) is a strictly continuous, unital \Star{}homomorphism from~\(\Mult(A)\) to~\(\Mult(B)\).  These arrows are composed in the obvious way.  Since~\(A\) is strictly dense in its multiplier algebra, an arrow is determined uniquely by its restriction to a \Star{}homomorphism from~\(A\) to~\(\Mult(B)\).  A \Star{}homomorphism from~\(A\) to~\(\Mult(B)\) extends to a strictly continuous, unital \Star{}homomorphism from~\(\Mult(A)\) to~\(\Mult(B)\) if and only if it is non-degenerate.  We also call non-degenerate \Star{}homomorphisms from~\(A\) to~\(\Mult(B)\) \emph{\Star{}representations} of~\(A\) on~\(B\).  Thus the arrows in~\(\Csttwocat\) are equivalent to \emph{\Star{}representations} of~\(A\) on~\(B\); some authors simply call them \emph{morphisms}.

Let \(f\) and~\(g\) be two arrows from~\(A\) to~\(B\), that is, strictly continuous, unital \Star{}homomorphisms from~\(\Mult(A)\) to~\(\Mult(B)\).  An element \(b\in\Mult(B)\) is called an \emph{intertwiner} from~\(f\) to~\(g\) if \(b\cdot f(a) = g(a)\cdot b\) for all \(a\in \Mult(A)\) or, equivalently, for all \(a\in A\).  If~\(b\) is unitary, this is equivalent to \(g = \Ad_b\circ f\), where~\(\Ad_b\) is the inner automorphism generated by~\(b\).  The set of bigons from~\(f\) to~\(g\) in~\(\Csttwocat\) is the set of unitary intertwiners from~\(f\) to~\(g\).  The vertical composition of intertwiners is the product in \(\Mult(B)\), the horizontal composition of two bigons \(c\colon f_1\Rightarrow g_1\) and \(b\colon f_2\Rightarrow g_2\) for composable pairs of arrows \(f_1,g_1\colon B\rightrightarrows C\) and \(f_2,g_2\colon A\rightrightarrows B\) is
\begin{equation}
  \label{eq:horizprod_Cstartwocat}
  c\horizprod b \defeq c\cdot f_1(b) = g_1(b)\cdot c.
\end{equation}
It is easy to check that the vertical and horizontal compositions of bigons are associative and satisfy the interchange law.

This strict \(2\)\nb-category~\(\Csttwocat\) may be modified in several ways.  We may allow non-unitary intertwiners.  We may also allow non-unital strictly continuous \Star{}homomorphisms \(\Mult(A)\to\Mult(B)\) as our morphisms.

\begin{remark}
  \label{rem:Cstarcat_without_multipliers}
  When we want to study \(\K\)\nb-theory, we should use possibly degenerate \Star{}homomorphisms \(A\to B\) as morphisms.  This creates problems with the horizontal composition of bigons because degenerate \Star{}homomorphisms need not act on multipliers.  This problem may be avoided if we replace \(\Mult(B)\) by the unitalisation~\(B^+\) of~\(B\).  This leads to a \(2\)\nb-category with \(\Cst\)\nb-algebras as objects, \Star{}homomorphisms \(A\to B\) as arrows from~\(A\) to~\(B\) and unitary intertwiners in~\(B^+\) as bigons.
\end{remark}

\subsubsection{Strict \texorpdfstring{$2$}{2}-groupoids}
\label{sec:strict-2-gpd}

\begin{definition}
  \label{def:strict_two-groupoid}
  A \emph{strict \(2\)\nb-groupoid} is a strict \(2\)\nb-category in which all arrows and bigons are invertible (bigons are invertible both for the vertical and horizontal product); a \emph{strict \(2\)\nb-group} is a strict \(2\)\nb-groupoid with a single object.
\end{definition}

Given a strict \(2\)\nb-groupoid, its objects and arrows form a groupoid, which we call~\(G\).  We may use horizontal products with unit bigons to produce any bigon from a bigon that starts at a unit morphism:
\[
\xymatrix@C+2em{
  y &
  \ar@/_1pc/[l]_{1_y}_{}="0"
  \ar@/^1pc/[l]^{\tcm(a)}^{}="1"
  \ar@{=>}"0";"1"^{a}
  y &
  \ar@/_1pc/[l]_{f}_{}="2"
  \ar@/^1pc/[l]^{f}^{}="3"
  \ar@{=>}"2";"3"^{1_f}
  x}
\quad\mapsto\quad
\xymatrix@C+2em{
  y & \ar@/_1pc/[l]_{f}_{}="4"
  \ar@/^1pc/[l]^{\tcm(a)f}^{}="5"
  \ar@{=>}"4";"5"^{a\horizprod f}
  x.
}
\]
The bigons starting at the identity morphism on~\(x\) form a group~\(H_x\) with respect to horizontal composition, and the map \(h\mapsto \tcm(h)\) is a homomorphism to the isotropy group bundle \(H=\bigsqcup_{x\in G^{(0)}} H_x\) of our groupoid~\(G\).  Furthermore, the groupoid~\(G\) acts on the group bundle~\(H\) by a conjugation action:
\[
\xymatrix@C+2em{
  x &
  \ar@/_1pc/[l]_{g}_{}="2"
  \ar@/^1pc/[l]^{g}^{}="3"
  \ar@{=>}"2";"3"^{1_g}
  y &
  \ar@/_1pc/[l]_{1_y}_{}="0"
  \ar@/^1pc/[l]^{\tcm(a)}^{}="1"
  \ar@{=>}"0";"1"^{a}
  y &
  \ar@/_1pc/[l]_{g^{-1}}_{}="2"
  \ar@/^1pc/[l]^{g^{-1}}^{}="3"
  \ar@{=>}"2";"3"^{1_{g^{-1}}}
  x}
\quad\mapsto\quad
\xymatrix@C+2em{
  y & \ar@/_1pc/[l]_{f}_{}="4"
  \ar@/^1pc/[l]^{g\tcm(a)g^{-1}}^{}="5"
  \ar@{=>}"4";"5"^{\acm_g(a)}
  x.
}
\]
The groupoid~\(G\), the group bundle \(H\) with this conjugation action, and the map \(\tcm\colon H\to G\) form a \emph{crossed module} of groupoids.  If we start with a topological \(2\)\nb-groupoid, this yields a crossed module of topological groupoids.  Conversely, any (topological) strict \(2\)\nb-groupoid arises in this fashion for a unique crossed module of (topological) groupoids.  Crossed modules and their (strict) actions on \(\Cst\)\nb-algebras are also studied in~\cite{Buss-Meyer-Zhu:Non-Hausdorff_symmetries}.  In this article, we focus on weak notions of group action and equivariant map.

\begin{example}
  \label{exa:torus_by_integers}
  Let \(\vartheta\in\R\) and let \(\lambda\defeq \Eul^{2\pi\ima\vartheta}\). Consider the map~\(\tcm\) from ~\(\Z\) into the \(1\)\nb-torus~\(\Torus\) defined by \(\tcm(n)\defeq \lambda^n\).  The map \(\tcm\colon \Z\to\Torus\) and the trivial conjugation action of~\(\Torus\) on~\(\Z\) define a crossed module, which appears as symmetry of the rotation algebra \(A_\vartheta\) in~\cite{Buss-Meyer-Zhu:Non-Hausdorff_symmetries}.

  From the discussion above, we can construct a strict \(2\)\nb-group out of this crossed module.  It has a single object~\(\star\), and the group of arrows is~\(\Torus\) with its usual multiplication.  The set of bigons is \(\Z\times\Torus\), where a pair \((n,z)\in \Z\times\Torus\) is viewed as a \(2\)\nb-morphism from~\(z\) to~\(\lambda^n\cdot z\):
  \[
  \xymatrix@C+2em{
    \star & \star
    \ar@/_1pc/[l]_{z}_{}="0"
    \ar@/^1pc/[l]^{\lambda^n\cdot z}_{}="1"
    \ar@{=>}"0";"1"^{n}
  }
  \]
  The horizontal multiplication of \(2\)\nb-morphisms is
  \[
  \xymatrix@C+2em{
    \star & \star
    \ar@/_1pc/[l]_-{z_1}_{}="0"
    \ar@/^1pc/[l]^-{\lambda^{n_1}z_1}_{}="1"
    \ar@{=>}"0";"1"^{n_1}
    &\star
    \ar@/_1pc/[l]_-{z_2}_{}="2"
    \ar@/^1pc/[l]^-{\lambda^{n_2}z_2}_{}="3"
    \ar@{=>}"2";"3"^{n_2}
  }
  \quad\mapsto\quad
  \xymatrix@C+2em{
    \star &\star,
    \ar@/_1pc/[l]_-{z_1z_2}^{}="0"
    \ar@/^1pc/[l]^-{\lambda^{n_1+n_2}z_1z_2}_{}="1"
    \ar@{=>}"0";"1"|{\vphantom{|}n_1+n_2}
  }
  \]
  and the vertical composition is
  \[
  \xymatrix@C+2em{
    \star & \star
    \ar@/_1.5pc/[l]_-{z}^{}="0"
    \ar[l]|{\lambda^n z}^{}="3"_{}="1"
    \ar@/^1.5pc/[l]^-{\lambda^{n'}\lambda^nz}_{}="2"
    \ar@{=>} "0";"1"^{n}
    \ar@{=>} "3";"2"^{n'}
  }
  \quad\mapsto\quad
  \xymatrix@C+2em{
    \star & \star.
    \ar@/_1.5pc/[l]_-{z}^{}="0"
    \ar@/^1.5pc/[l]^-{\lambda^{n+n'}z}_{}="1"
    \ar@{=>}"0";"1"|{\vphantom{|}n+n'}
  }
  \]
\end{example}

\subsection{The weak \texorpdfstring{$2$}{2}-category of correspondences}
\label{sec:Corrcat}

Why did we choose unitary multipliers as our bigons in~\(\Csttwocat\)?  This choice becomes more natural after introducing another \(2\)\nb-category~\(\Corrcat\) of \(\Cst\)\nb-algebras that has \(\Cst\)\nb-correspondences as arrows.

\begin{definition}
  \label{def:correspondence}
  A \emph{\(\Cst\)\nb-correspondence} from~\(A\) to~\(B\) is a Hilbert \(B\)\nb-module~\(\Hilm\) with a non-degenerate \Star{}representation of~\(A\) on~\(\Hilm\) by adjointable operators.
\end{definition}

Bigons in~\(\Corrcat\) are isomorphisms of \(\Cst\)\nb-correspondences, that is, \(A,B\)\nb-bimodule isomorphisms that are unitary for the \(B\)\nb-valued inner products.

\begin{example}
  \label{exa:automorphism_Hilbert_bimodule}
  Let \(f\colon A\to\Mult(B)\) be a \Star{}representation.  This yields a correspondence~\([f]\) that involves~\(B\) with its usual right Hilbert \(B\)\nb-module structure and the left action of~\(A\) induced by~\(f\), that is, \(a\cdot b\defeq f(a)b\); here we tacitly use the fact that the algebra of adjointable operators on~\(B\) is \(\Mult(B)\).  An isomorphism between \([f_1]\) and~\([f_2]\) is a unitary adjointable operator~\(b\) on~\(B\) --~that is, a unitary multiplier of~\(B\)~-- such that \(b f_1(a) = f_2(a) b\) for all \(a\in A\).  Thus the bigons \([f_1]\Rightarrow [f_2]\) in~\(\Corrcat\) are the same as the bigons \(f_1\Rightarrow f_2\) in~\(\Csttwocat\).
\end{example}

To get a \(2\)\nb-category~\(\Corrcat\), we also have to compose arrows and bigons.  The composition of arrows is the (balanced) tensor product: given two composable correspondences \(f_1\colon B\to \Bound(\Hilm_D)\) and \(f_2\colon A\to \Bound(\Hilm_B)\), we want to define
\begin{equation}
  \label{eq:compose_correspondences}
  f_1\circ f_2 \defeq f_2\otimes_B f_1.
\end{equation}
More precisely, the right hand side denotes the Hilbert \(D\)\nb-module \(\Hilm_B \otimes_B \Hilm_D\) with the action \(f_2\otimes_B\Id\) of~\(A\).  Recall that \(\Hilm_B \otimes_B \Hilm_D\) is the completion of the algebraic tensor product \(\Hilm_B\otimes\Hilm_D\) with respect to the \(D\)\nb-valued inner product
\[
\langle \xi_1\otimes\eta_1,\xi_2\otimes\eta_2\rangle
\defeq \bigl\langle \eta_1,
f_1\bigl(\langle \xi_1,f_1\xi_2\rangle_B\bigr)\cdot\eta_2 \bigr\rangle_D.
\]
If \([f_1]\) and~\([f_2]\) are correspondences coming from \Star{}representations \(f_1\colon B\to \Mult(D)\) and \(f_2\colon A\to \Mult(B)\) as in Example~\ref{exa:automorphism_Hilbert_bimodule}, then there is a natural isomorphism
\[
[f_1\circ f_2] \cong [f_1]\circ[f_2] \defeq [f_2]\otimes_B[f_1].
\]
The horizontal and vertical compositions of arrows are given by
\[
b_1\horizprod b_2 \defeq b_2 \otimes_B b_1
\qquad\text{and}\qquad
b_1\vertprod b_2 \defeq b_1 \cdot b_2.
\]
The unit arrow~\(1_A\) on an object~\(A\) (a \(\Cst\)\nb-algebra) is the identity correspondence \([\Id_A]\), and the unit bigon~\(1_f\) on an arrow~\(f\) (a \(\Cst\)\nb-correspondence) is the identity isomorphism on the underlying Hilbert module of~\(f\).  These obvious definitions produce only a \emph{weak} \(2\)\nb-category, also called a \emph{bicategory} in~\cite{Benabou:Bicategories}.

\subsubsection{The definition of weak \texorpdfstring{$2$}{2}-categories}
\label{sec:weak-2-cat}

Weak \(2\)\nb-categories differ from strict ones because the composition of arrows is only associative and unital \emph{up to isomorphism}.  A weak \(2\)\nb-category has objects, arrows and bigons as above, with a unit arrow on each object and a unit bigon on each arrow, a composition of arrows, and horizontal and vertical compositions of bigons.  But the associativity law \((f_1\cdot f_2)\cdot f_3 = f_1 \cdot (f_2\cdot f_3)\) and the unit law \(1\cdot f = f = f\cdot 1\) are \emph{weakened}: we only require invertible bigons
\begin{equation}
  \label{eq:weak_two-category_bigons}
  (f_1\cdot f_2) \cdot f_3
  \xRightarrow[\cong]{a(f_1,f_2,f_3)}
  f_1\cdot (f_2 \cdot f_3),\qquad
  1_y\cdot f \xRightarrow[\cong]{l_f} f
  \xLeftarrow[\cong]{r_f} f\cdot 1_x
\end{equation}
for any triple of composable arrows \(f_1,f_2,f_3\) and any arrow \(f\colon x\to y\).  The bigons \(a(f_1,f_2,f_3)\), \(l_f\) and~\(r_f\) are sometimes called \emph{associator}, and left and right \emph{unitor}, respectively.  Besides naturality, we require the following two diagrams to commute:
\begin{equation}
  \label{eq:pentagon}
  \begin{gathered}
    \xymatrix{
      \bigl((f_1\cdot f_2)\cdot f_3\bigr)\cdot f_4 \ar@{=>}[r] \ar@{=>}[d]&
      (f_1\cdot f_2)\cdot (f_3\cdot f_4) \ar@{=>}[r]&
      f_1\cdot \bigl(f_2\cdot (f_3\cdot f_4)\bigr)\\
      \bigl(f_1\cdot (f_2\cdot f_3)\bigr)\cdot f_4 \ar@{=>}[rr]&&
      f_1\cdot \bigl((f_2\cdot f_3)\cdot f_4\bigr) \ar@{=>}[u]
    }
  \end{gathered}
\end{equation}
for a quadruple of composable arrows \(f_1,f_2,f_3,f_4\), and
\begin{equation}
  \label{eq:coherence-on-1}
  \begin{gathered}
    \xymatrix{
      (f\cdot 1_y)\cdot g \ar@{=>}[rr] \ar@{=>}[dr]&&
      f\cdot (1_y\cdot g) \ar@{=>}[dl]\\
      &f\cdot g
    }
  \end{gathered}
\end{equation}
for composable arrows \(f\colon y\to z\) and \(g\colon x\to y\); the bigons involved come from~\eqref{eq:weak_two-category_bigons}.  By Mac Lane's Coherence Theorem~\cite{MacLane:Associativity}, the two \emph{coherence laws} \eqref{eq:pentagon} and~\eqref{eq:coherence-on-1} imply that any two parallel bigons constructed out of the bigons in~\eqref{eq:weak_two-category_bigons} are equal.

In addition to the coherence laws, the vertical composition of bigons is required to be strictly associative and unital; the horizontal composition of bigons must be associative up to the vertical products with the associators for arrows that are needed to identify the source and range arrows of \((b_1\horizprod b_2)\horizprod b_3\) with those of \(b_1\horizprod (b_2\horizprod b_3)\), and it must be unital in a similar sense.

The following definition summarises these properties:

\begin{definition}
  \label{def:weak_2-cat}
  A \emph{weak \(2\)\nb-category}~\(\cC\) contains the following data:
  \begin{itemize}
  \item a set of objects~\(\cC_0\);
  \item for each pair of objects \((x,y)\), a category \(\cC(x,y)\) of arrows from~\(x\) to~\(y\); the objects and morphisms of the categories~\(\cC(x,y)\) are called arrows and bigons of~\(\cC\), respectively; in particular, this gives us a unit bigon \(\Id_f\) for each arrow~\(f\) in~\(\cC\) and a vertical composition
    \[
    \xymatrix@1@C+2em{
      y & x
      \ar@/_1.5pc/[l]_-{f}^{}="0"
      \ar[l]_{}="1"
      \ar@/^1.5pc/[l]^-{g}_{}="2"
      \ar@{=>} "0";"1"^{b}
      \ar@{=>} "1";"2"^{b'}
    }
    \quad\mapsto\quad
    \xymatrix@1@C+2em{
      y & x.
      \ar@/_1.5pc/[l]_-{f}^{}="0"
      \ar@/^1.5pc/[l]^-{g}_{}="1"
      \ar@{=>}"0";"1"|{\vphantom{|}b'\vertprod b}
    }
    \]
  \item For each triple \((x, y, z)\) of objects, a composition functor \(\cC(y, z) \times \cC(x, y) \to \cC(x, z)\),
    \[
    \xymatrix@1@C+2em{
      z& y
      \ar@/_1pc/[l]_-{f_1}_{}="0"
      \ar@/^1pc/[l]^-{g_1}_{}="1"
      \ar@{=>}"0";"1"^{b_1}
      &x
      \ar@/_1pc/[l]_-{f_2}_{}="2"
      \ar@/^1pc/[l]^-{g_2}_{}="3"
      \ar@{=>}"2";"3"^{b_2}
    }
    \quad\mapsto\quad
    \xymatrix@1@C+2em{
      z & x,
      \ar@/_1pc/[l]_-{f_1\cdot f_2}^{}="0"
      \ar@/^1pc/[l]^-{g_1\cdot g_2}_{}="1"
      \ar@{=>}"0";"1"|{\vphantom{|}b_1\horizprod b_2}
    }
    \]
    which combines the composition of arrows and the horizontal composition of bigons.

  \item For each object~\(x\), a unit arrow \(1_x \in \cC(x, x)\);
  \item invertible bigons as in~\eqref{eq:weak_two-category_bigons}.
  \end{itemize}
  This data must satisfy the coherence conditions \eqref{eq:pentagon} and~\eqref{eq:coherence-on-1}.
\end{definition}

\begin{example}
  In the case of~\(\Corrcat\), we use the obvious isomorphisms of correspondences
  \[
  (f_1\cdot f_2) \cdot f_3
  \xRightarrow[\cong]{a(f_1,f_2,f_3)}
  f_1\cdot (f_2 \cdot f_3),\qquad
  f\cdot 1_B \xRightarrow[\cong]{l_f} f
  \xLeftarrow[\cong]{r_f} 1_A\cdot f;
  \]
  the first isomorphism maps the generator \((\xi_1\otimes \xi_2)\otimes \xi_3\) of the first tensor product to \(\xi_1\otimes (\xi_2\otimes \xi_3)\); the second and third isomorphisms map \(\xi\otimes b\) and \(a\otimes \xi\) to \(\xi\cdot b\) and \(a\cdot \xi\), respectively.  The coherence laws are trivial to verify for these natural isomorphisms.
\end{example}

The definition of a weak \(2\)\nb-category is a special case of weakening equalities of arrows in \(2\)\nb-categories to isomorphisms of arrows.  Whenever we do this, we must specify the bigons that implement these isomorphisms as part of our data, and we must require these bigons to satisfy suitable coherence laws.  It is usually easy to find \emph{some} such coherence laws.  But it may be more difficult to justify that we have found \emph{all} relevant coherence laws.  The examples we are going to consider are sufficiently well-known to find this in the literature.

\subsection{Isomorphisms and equivalences}

\begin{definition}
  \label{def:isomorphism_equivalence}
  An \emph{isomorphism} in a strict \(2\)\nb-category is an arrow \(f\colon x\to y\) for which there is an arrow \(g\colon y\to x\) with \(g\circ f = 1_x\) and \(f\circ g = 1_y\).

  An \emph{equivalence} in a weak \(2\)\nb-category is an arrow \(f\colon x\to y\) for which there is an arrow \(g\colon y\to x\) with \(g\circ f \cong 1_x\) and \(f\circ g \cong 1_y\), that is, there are invertible bigons \(g\circ f\Rightarrow 1_x\) and \(f\circ g\Rightarrow 1_y\).
\end{definition}

Clearly, products of equivalences are equivalences, and products of isomorphisms are isomorphisms.  The distinction between isomorphisms and equivalences is uninteresting in weak \(2\)\nb-categories because unit arrows in such categories only behave nicely up to equivalence, anyway.

\begin{proposition}
  \label{pro:equivalence_Cstar}
  Let \(A\) and~\(B\) be \(\Cst\)\nb-algebras.  A strictly continuous unital \Star{}homo\-morphism \(f\colon \Mult(A)\to\Mult(B)\) is an equivalence in~\(\Csttwocat\) if and only if it is an isomorphism in~\(\Csttwocat\), if and only if~\(f\) restricts to a \(\Cst\)\nb-algebra isomorphism between~\(A\) and~\(B\).
\end{proposition}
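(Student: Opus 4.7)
My plan is to prove the cycle $(3)\Rightarrow(2)\Rightarrow(1)\Rightarrow(3)$. The first two implications are quick. For $(3)\Rightarrow(2)$ I will use the standard fact that a $\Cst$\nb-isomorphism $f|_A\colon A\to B$ is non-degenerate as a map into $\Mult(B)$ and so extends uniquely to a strictly continuous unital $*$-homomorphism $\Mult(A)\to\Mult(B)$, which must agree with $f$; applying the same to $(f|_A)^{-1}$ and invoking uniqueness of the strict extension produces a strict two-sided inverse of $f$ in $\Csttwocat$. The implication $(2)\Rightarrow(1)$ is immediate because an equality of arrows is a special case of an invertible bigon.

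The substantive direction is $(1)\Rightarrow(3)$. I will choose a quasi-inverse arrow $g\colon\Mult(B)\to\Mult(A)$ and unwind the two invertible bigons $g\circ f\Rightarrow 1_{\Mult(A)}$ and $f\circ g\Rightarrow 1_{\Mult(B)}$: in the bigon convention of $\Csttwocat$ these are unitaries $u\in\Mult(A)$ and $v\in\Mult(B)$ with $g\circ f=\Ad_{u^*}$ and $f\circ g=\Ad_{v^*}$. Since inner automorphisms are bijective, both compositions are bijective, which forces $f$ and $g$ to be bijective $*$-homomorphisms between the multiplier algebras of $A$ and $B$.

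The real work is to show that this multiplier-algebra isomorphism restricts to a map $A\to B$. Both $f|_A\colon A\to\Mult(B)$ and $g|_B\colon B\to\Mult(A)$ are non-degenerate, since $f$ and $g$ are arrows in $\Csttwocat$. Writing $g=\Ad_{u^*}\circ f^{-1}$ and absorbing the unitary $u\in\Mult(A)$, the non-degeneracy of $g|_B$ becomes the density condition $\overline{f^{-1}(B)\cdot A}=A$. Cohen factorisation then lets me write each $a\in A$ as $f^{-1}(b)\cdot a'$ with $b\in B$ and $a'\in A$; applying $f$ yields $f(a)=b\cdot f(a')\in B\cdot\Mult(B)\subseteq B$, so $f(A)\subseteq B$. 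The symmetric argument, starting from the non-degeneracy of $f|_A$, gives $f^{-1}(B)\subseteq A$. Combined, these inclusions yield $f(A)=B$, so $f$ restricts to a $\Cst$\nb-isomorphism $A\to B$.

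The main obstacle is precisely this last restriction step. A $*$-isomorphism between multiplier algebras need not restrict to one between the underlying $\Cst$\nb-algebras, and it is the combination of non-degeneracy on \emph{both} sides --- which the symmetry between $f$ and $g$ in the definition of equivalence makes available --- that drives the argument; the unitaries $u$ and $v$ play a purely cosmetic role because they are multiplier unitaries and hence leave the ideals $A$ and $B$ invariant.
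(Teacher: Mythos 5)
Your proof is correct and follows essentially the same route as the paper: the key step in both is to use the non-degeneracy (coming from strict continuity) of the quasi-inverse to see that \(f(A)\) lands in the norm-closure of \(B\cdot f(A)\subseteq B\), and symmetrically for the other inclusion. The appeal to Cohen factorisation is unnecessary overkill --- since \(B\) is a closed ideal in \(\Mult(B)\), it suffices to note that \(f\) maps the norm-dense set \(f^{-1}(B)\cdot A\) into \(B\cdot\Mult(B)\subseteq B\) and is norm-continuous --- but it is not an error.
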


\begin{proof}
  It is clear that the strictly continuous extension of an isomorphism \(A\congto B\) is an isomorphism and hence an equivalence in~\(\Csttwocat\).  Conversely, let~\(f\) be an equivalence in the \(2\)\nb-category~\(\Csttwocat\).  This means that there is another strictly continuous unital \Star{}homomorphism \(g\colon \Mult(B)\to\Mult(A)\) such that \(f\circ g\cong\Id_{\Mult(B)}\) and \(g\circ f\cong\Id_{\Mult(A)}\), that is, \(f\circ g=\Ad_u\) and \(g\circ f=\Ad_v\) for unitaries \(u\in\Mult(B)\), \(v\in\Mult(A)\).  Strict continuity implies that \(g(B)\cdot A\) is norm-dense in~\(A\).  Hence \(f(A)\) is contained in the norm-closure of
  \[
  f\bigl(g(B)\cdot A\bigr) = \Ad_v(B)\cdot f(A) = B\cdot f(A)
  \subseteq B.
  \]
  Thus \(f(A)\subseteq B\) and, similarly, \(g(B)\subseteq A\).  Since inner automorphisms restrict to bijections on \(A\) and~\(B\), this implies that~\(f\) restricts to a bijection \(A\congto B\), so that~\(f\) is an isomorphism of \(\Cst\)\nb-algebras as asserted.
\end{proof}

\begin{proposition}
  \label{pro:equivalence_correspondence}
  Let \(A\) and~\(B\) be two \(\Cst\)\nb-algebras.  A correspondence \(f\colon A\to\Bound(\Hilm)\) from~\(A\) to~\(B\) is an equivalence in~\(\Corrcat\) if and only if~\(f\) is an isomorphism onto \(\Comp(\Hilm)\), so that we may enrich~\(f\) to an \(A,B\)-imprimitivity bimodule.
\end{proposition}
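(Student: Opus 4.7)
I will prove the two implications separately, both building on the theory of imprimitivity bimodules. The backward direction constructs the inverse concretely via the conjugate module; the forward direction is the main content and extracts an imprimitivity bimodule structure from the abstract equivalence data.

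\emph{Backward direction.} Assume $f$ restricts to a \Star{}isomorphism from~$A$ onto $\Comp(\Hilm)$ and that~$\Hilm$ has been enriched to an $A,B$\nb-imprimitivity bimodule, with left $A$-valued inner product ${}_A\langle\xi,\eta\rangle \defeq f^{-1}(|\xi\rangle\langle\eta|)$. The conjugate Hilbert $A$-module~$\Hilm^*$, with its natural left $B$-action $b\cdot\eta^* \defeq (\eta\cdot b^*)^*$, is a nondegenerate $B,A$\nb-correspondence. I would verify that the standard structure maps
\[
\Hilm\otimes_B\Hilm^*\to A,\quad \xi\otimes\eta^*\mapsto{}_A\langle\xi,\eta\rangle,
\qquad
\Hilm^*\otimes_A\Hilm\to B,\quad \xi^*\otimes\eta\mapsto\langle\xi,\eta\rangle_B,
\]
are well-defined bimodule maps which preserve inner products (using the imprimitivity compatibility ${}_A\langle\xi,\eta\rangle\cdot\zeta = \xi\cdot\langle\eta,\zeta\rangle_B$) and are surjective by the hypothesis. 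Hence they give unitary isomorphisms of correspondences, exhibiting~$\Hilm^*$ as an inverse to~$f$ in~$\Corrcat$.

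\emph{Forward direction.} Assume~$f$ is an equivalence. Choose an inverse correspondence $g\colon B\to\Bound(\Hilm')$ together with unitary bigons $u\colon\Hilm\otimes_B\Hilm'\congto A$ and $v\colon\Hilm'\otimes_A\Hilm\congto B$, adjusted (as one always may) to satisfy the triangle identities of an adjoint equivalence in~$\Corrcat$. Unpacking the triangle identity on the triple tensor $(\Hilm\otimes_B\Hilm')\otimes_A\Hilm \cong \Hilm\otimes_B(\Hilm'\otimes_A\Hilm)$ yields the fundamental formula
\[
f\bigl(u(\xi_1\otimes\eta)\bigr)\cdot\xi_2 \;=\; \xi_1\cdot v(\eta\otimes\xi_2)
\qquad(\xi_1,\xi_2\in\Hilm,\ \eta\in\Hilm').
\]
The plan is then to show $f(A)=\Comp(\Hilm)$. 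For $a = u(\xi_1\otimes\eta)$ and $a' = u(\xi_1'\otimes\eta')^*$ in~$A$, the product~$a a'\in A$ maps under~$f$ to the composition $|\xi_1\rangle\circ v_\eta\circ v_{\eta'}^*\circ\langle\xi_1'|$, where $v_\eta(\xi)=v(\eta\otimes\xi)$. The middle piece $v_\eta v_{\eta'}^*\colon B\to B$ is right $B$-linear, hence is left multiplication by an element $c\in\Mult(B)$; the composition then equals the rank-one operator $|\xi_1\cdot c\rangle\langle\xi_1'|$, which is compact once one knows $c\in B$. Since products~$aa'$ of this form span~$A$ densely (by surjectivity of~$u$ and $\cl{A\cdot A}=A$) and rank-one operators span $\Comp(\Hilm)$ densely, we conclude $f(A)\subseteq\Comp(\Hilm)$, and the image exhausts all rank-one operators. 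Injectivity of~$f$ follows from injectivity of~$u$, and fullness of~$\Hilm$ as Hilbert $B$-module follows by the symmetric argument using~$v$.

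\textbf{Main obstacle.} The principal technical difficulty is the ``collapse'' step in the forward direction: showing that the composition $v_\eta v_{\eta'}^*$ truly corresponds to multiplication by an element of $B$ (not merely of $\Mult(B)$), equivalently that $f(aa')$ lies in $\Comp(\Hilm)$ rather than in the larger algebra $\Bound(\Hilm)=\Mult(\Comp(\Hilm))$. A priori the adjoint $v_\eta^*\colon B\to\Hilm$ only identifies with a ket $|\xi''\rangle$ for some $\xi''\in\Mult(\Hilm)$. Pinning down that the twofold composition lands in the compact operators requires the triangle identity together with an approximate-unit argument and the nondegeneracy of the correspondences~$\Hilm,\Hilm'$, and is the heart of the proof.
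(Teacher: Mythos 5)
The paper itself offers no internal argument here---it simply cites \cite{Echterhoff-Kaliszewski-Quigg-Raeburn:Categorical}*{Lemma 2.4}---so you are supplying a proof the paper omits, and your overall strategy (adjoint equivalence plus triangle identity) is the right one. The backward direction is standard and correct. In the forward direction, the identity \(f\bigl(u(\xi_1\otimes\eta)\bigr)\xi_2=\xi_1\cdot v(\eta\otimes\xi_2)\) is a correct consequence of the triangle identity, and the inclusion \(f(A)\subseteq\Comp(\Hilm)\) does follow---in fact more easily than you suggest. The ``main obstacle'' you flag is not an obstacle: writing \(\xi_1=\xi_0\cdot b_0\) with \(b_0\in B\) (Cohen factorisation, or an approximate unit of \(B\)), the ket \(\lvert\xi_1\rangle\colon B\to\Hilm\), \(b\mapsto\xi_1 b\), equals \(\lvert\xi_0\rangle\langle b_0^*\rvert\) and hence lies in \(\Comp(B,\Hilm)\); therefore \(f\bigl(u(\xi_1\otimes\eta)\bigr)=\lvert\xi_1\rangle\circ v_\eta\) is already compact, with no need to form products \(aa'\) or to decide whether \(c=v_\eta v_{\eta'}^*\) lies in \(B\) rather than \(\Mult(B)\). (Equivalently: \(\xi_1\cdot c\in\Hilm\) for every \(c\in\Mult(B)\), and every genuine rank-one operator is compact.)

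The genuine gap is the reverse inclusion, which you dispose of with the single clause ``the image exhausts all rank-one operators.'' This does not follow from what precedes it: at that stage you only know that \(f(A)\) is a closed, nondegenerately acting \(\Cst\)\nb-subalgebra of \(\Comp(\Hilm)\), and such subalgebras can be proper (e.g.\ \(\C\cdot 1\subseteq\Mat_2(\C)\)). You must actually exhibit each \(\lvert\zeta\rangle\langle\zeta'\rvert\) inside \(f(A)\). Your formula does the job once combined with an approximate-unit argument: writing \(v_\eta=v\circ T_\eta\) with \(T_\eta\colon\Hilm\to\Hilm[K]\otimes_A\Hilm\) the creation operator, one has \(v_\eta v_{\eta'}^*=v\bigl(\lvert\eta\rangle\langle\eta'\rvert\otimes 1\bigr)v^{-1}\); choosing finite sums \(\sum_i\lvert\eta_i\rangle\langle\eta_i\rvert\) that form an approximate unit of \(\Comp(\Hilm[K])\) gives \(\sum_i v_{\eta_i}v_{\eta_i}^*\to 1\) strictly in \(\Mult(B)\), whence
\[
\sum_i f\bigl(u(\zeta\otimes\eta_i)\,u(\zeta'\otimes\eta_i)^*\bigr)
=\Bigl\lvert\zeta\cdot\textstyle\sum_i v_{\eta_i}v_{\eta_i}^*\Bigr\rangle\bigl\langle\zeta'\bigr\rvert
\longrightarrow\lvert\zeta\rangle\langle\zeta'\rvert
\]
in norm, and closedness of \(f(A)\) finishes the argument. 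Without some such step the forward direction is incomplete. The remaining points are fine: injectivity of \(f\) follows because \(f(a)=0\) forces \(a\cdot u(\xi\otimes\eta)=u(a\xi\otimes\eta)=0\) on a dense subset of \(A\), and fullness of \(\Hilm\) over \(B\) follows from surjectivity of \(v\) together with the formula for the \(B\)\nb-valued inner product on \(\Hilm[K]\otimes_A\Hilm\).
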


\begin{proof}
  This is equivalent to \cite{Echterhoff-Kaliszewski-Quigg-Raeburn:Categorical}*{Lemma 2.4}.
\end{proof}

\section{The group case}
\label{sec:group_case}

Let~\(G\) be a group.  A (strict) \emph{group action} of~\(G\) on an object~\(A\) of a category~\(\Cat\) is given by invertible arrows~\(\alpha_g\) for all \(g\in G\) that satisfy the equations \(\alpha_1=1_A\) and \(\alpha_{g_1}\alpha_{g_2} = \alpha_{g_1g_2}\) for all \(g_1,g_2\in G\); the equation \(\alpha_1=1_A\) is redundant if~\(\Cat\) is a groupoid.  A (strictly) \emph{equivariant map} between group actions \((\alpha_g)_{g\in G}\) on~\(A\) and \((\beta_g)_{g\in G}\) on~\(B\) is an arrow \(f\colon A\to B\) that satisfies the equation \(f\alpha_g = \beta_g f\) for all \(g\in G\).  Two such maps \(f_1\) and~\(f_2\) are \emph{equal} if \(f_1=f_2\).

In this section, we replace~\(\Cat\) by a \(2\)\nb-category~\(\Cattwo\) and define the notions of \emph{weak group actions}, \emph{weakly equivariant maps}, and \emph{modifications} of weakly equivariant maps by replacing equalities \(f=g\) of arrows by bigons \(f\Rightarrow g\) or \(g\Rightarrow f\) that satisfy appropriate coherence laws.  We limit our discussion to the more palatable group case in this section.  In the next section, we will generalise our definitions, allowing general \(2\)\nb-groupoids to act.  This also explains the coherence laws: they are part of the standard way to define the \(3\)\nb-category of \(2\)\nb-categories (see~\cite{Leinster:Basic_Bicategories}); weak group actions are the same thing as morphisms from a group, viewed as a \(2\)\nb-category, to another \(2\)\nb-category; weakly equivariant maps are transformations between such morphisms, and modifications appear under the same name in~\cite{Leinster:Basic_Bicategories}.

We show that weak group actions in the \(2\)\nb-category~\(\Csttwocat\) are Busby--Smith twisted actions (as defined in~\cite{Kaliszewski:Morita_twisted}), while weak group actions in the correspondence \(2\)\nb-category~\(\Corrcat\) are equivalent to saturated Fell bundles.  Thus \(2\)\nb-categories make precise in what sense Fell bundles are a kind of group action.  It is not clear, however, whether non-saturated Fell bundles can be interpreted as group actions as well: they seem closer to the actions of the inverse semigroup \(S(G)\) defined by Ruy Exel in~\cite{Exel:PartialActionsGroupsAndInverseSemigroups}, whose actions are partial actions of~\(G\), than to actions of the group itself.

A strictly equivariant map between two (weak) group actions is a strict transformation between them in the notation of~\cite{Leinster:Basic_Bicategories}.  Weakening this yields transformations between morphisms.  Any transformation to \(\Csttwocat\) may be decomposed into a strictly equivariant map and an equivalence between two actions on the same object, where equivalence of weak actions in~\(\Csttwocat\) is the usual notion of (exterior) equivalence for Busby--Smith twisted actions.  In addition, covariant representations are a special case of transformations between weak actions.

In the setting of \(2\)\nb-categories, transformations become themselves objects of a category; the morphisms between them are called \emph{modifications}.  For instance, a modification between two covariant representations, viewed as strong transformations, is a unitary intertwiner between these covariant representations.

\subsection{Weak group actions}
\label{sec:weak_group_action}

Let~\(G\) be a discrete group and let~\(\Cattwo\) be a \(2\)\nb-category.  A \emph{weak action} of~\(G\) on an object~\(A\) of~\(\Cattwo\) consists of
\begin{itemize}
\item arrows \(\alpha_g\colon A\to A\) for all \(g\in G\),

\item a bigon \(u\colon  1_A \Rightarrow \alpha_1 \), and

\item bigons \(\omega(g_1,g_2)\colon \alpha_{g_1}\alpha_{g_2}
  \Rightarrow\alpha_{g_1g_2}\) for all \(g_1,g_2\in G\),
\end{itemize}
subject to certain coherence laws.  These bigons replace the equations \(\alpha_1 = \Id_A\) and \(\alpha_{g_1}\alpha_{g_2}=\alpha_{g_1g_2}\) for strict actions by isomorphisms.  Roughly speaking, these isomorphisms satisfy a coherence law whenever we can prove an equality for strict group actions in two different ways.  For instance, we may simplify~\(\alpha_1\alpha_g\) to~\(\alpha_g\) in two different ways: \(\alpha_1\alpha_g = 1_A\alpha_g = \alpha_g\) or \(\alpha_1\alpha_g = \alpha_{1\cdot g} = \alpha_g\).  Similarly, there are two ways to simplify~\(\alpha_g\alpha_1\) to~\(\alpha_g\).  This leads to the coherence laws
\begin{equation}
  \label{eq:group_coherence_unit}
  \begin{gathered}
    \xymatrix{
      \alpha_1\cdot\alpha_g \ar@{=>}[r]^-{\omega(1,g)}
      \ar@{<=}[d]_{u\horizprod\alpha_g}&
      \alpha_{1\cdot g} \ar@{=}[d]\\
      1_A\cdot\alpha_g \ar@{<=>}[r]&
      \alpha_g
    }\qquad
    \xymatrix{
      \alpha_g\cdot\alpha_1 \ar@{=>}[r]^-{\omega(g,1)}
      \ar@{<=}[d]_{\alpha_g\horizprod u}&
      \alpha_{g\cdot 1} \ar@{=}[d]\\
      \alpha_g\cdot1_A \ar@{<=>}[r]&
      \alpha_g.
    }
  \end{gathered}
\end{equation}
This diagram contains some conventions that we will use in all following diagrams to keep them more readable.  First, the unlabeled invertible bigons~\(\Leftrightarrow\) denote the bigons~\eqref{eq:weak_two-category_bigons} that implement the associativity or the unitality of the target category~\(\Cattwo\); they become equalities in a strict \(2\)\nb-category such as~\(\Csttwocat\) and are obvious canonical isomorphisms in \(\Corrcat\).  Secondly, the composition of bigons is the vertical multiplication.  Thirdly, we denote the identity bigon on an arrow~\(\alpha\) by~\(\alpha\) as well.  This explains the notation \(u\horizprod\alpha_g\) and \(\alpha_g\horizprod u\).  The horizontal products that we need are usually of this form: we multiply horizontally with some identity bigon to change the source and target arrows of a bigon.

Similarly, we may simplify
\((\alpha_{g_1}\alpha_{g_2})\alpha_{g_3}\)
to~\(\alpha_{g_1g_2g_3}\) in two ways:
\begin{align*}
  (\alpha_{g_1}\alpha_{g_2})\alpha_{g_3}
  &= \alpha_{g_1g_2}\alpha_{g_3}
  = \alpha_{g_1g_2g_3},\\
  (\alpha_{g_1}\alpha_{g_2})\alpha_{g_3}
  &= \alpha_{g_1}(\alpha_{g_2}\alpha_{g_3})
  = \alpha_{g_1}\alpha_{g_2g_3}
  = \alpha_{g_1g_2g_3}.
\end{align*}
This leads us to the coherence law
\begin{equation}
  \label{eq:group_coherence_associativity}
  \begin{gathered}
    \xymatrix@C-2.5em@R-.5em{
      (\alpha_{g_1}\alpha_{g_2})\alpha_{g_3} \ar@{<=>}[rr]
      \ar@{=>}[d]_{\omega(g_1,g_2)\horizprod \alpha_{g_3}}&&
      \alpha_{g_1}(\alpha_{g_2}\alpha_{g_3})
      \ar@{=>}[d]^{\alpha_{g_1}\horizprod\omega(g_2,g_3)}\\
      \alpha_{g_1g_2}\alpha_{g_3} \ar@{=>}[dr]_{\omega(g_1g_2,g_3)}
      &&
      \alpha_{g_1}\alpha_{g_2g_3} \ar@{=>}[dl]^{\omega(g_1,g_2g_3)}
      \\
      &\alpha_{g_1g_2g_3}
    }
  \end{gathered}
\end{equation}
In the above definition, it is reasonable to require the bigons \(u\) and~\(\omega\) to be invertible; since all bigons in the categories \(\Csttwocat\) and~\(\Corrcat\) are invertible, anyway, we assume this wherever we need it.

It turns out that we do not need more coherence laws because, in some sense, they generate all other reasonable coherence laws.  More precisely, let \(g_1,\dotsc,g_n\in G\).  We use~\(u\) to add as many unit morphisms to this list as we like and then insert brackets to interpret the product \(\alpha_{g_1}\dotsm \alpha_{g_n}\); the associativity bigons in~\eqref{eq:weak_two-category_bigons} tell us how to relate different ways of putting these brackets.  Then we use the bigons~\(\omega\), together with the invertible bigons in~\eqref{eq:weak_two-category_bigons}, to simplify \(\alpha_{g_1}\dotsm \alpha_{g_n}\) to \(\alpha_{g_1\dotsm g_n}\).  The coherence laws above ensure that we get the same bigon \(\alpha_{g_1}\dotsm \alpha_{g_n} \Rightarrow \alpha_{g_1\dotsm g_n}\), no matter how we put brackets to begin with and in which order we simplify our product.  Our definition of weak action is a special case of the more general concept of \emph{morphisms} between weak \(2\)\nb-categories defined in \cite{Benabou:Bicategories}*{Section~4}.

Assume that the bigons \(u\) and~\(\omega\) are invertible.  Then the arrows~\(\alpha_g\) must be \emph{equivalences} because of the \emph{invertible} bigons \(\alpha_g\alpha_{g^{-1}} \Rightarrow \alpha_{gg^{-1}} = \alpha_1 \Leftarrow 1_A\).  In general, the arrows~\(\alpha_g\) need not be invertible.

We call a group action \emph{strict} if the bigons \(u\) and~\(\omega\) are identity bigons.  This yields group actions of~\(G\) in the usual sense.

\begin{remark}
  Recall that the condition \(\alpha_1=\Id\) is redundant if the target category~\(\Cat\) of a group action~\(\alpha\) is a groupoid.  Similarly, if the target \(2\)\nb-category~\(\Cattwo\) is a \(2\)\nb-groupoid, that is, all arrows are equivalences and all bigons are invertible, then the bigon~\(u\) in our definition of a weak action is redundant.  The (invertible) bigon \(\omega(1,1)\colon \alpha_1\alpha_1\Rightarrow\alpha_1\) then yields a canonical invertible bigon \(\alpha_1\Leftarrow \Id\) by cancelling the equivalence~\(\alpha_1\).  More precisely, we follow the chain of bigons
  \[
  \alpha_1 \Leftrightarrow \alpha_1(\alpha_1\alpha_1^{-1})
  \Leftrightarrow (\alpha_1\alpha_1)\alpha_1^{-1}
  \xLeftrightarrow{\omega(1,1)} \alpha_1\alpha_1^{-1} \Leftrightarrow \Id,
  \]
  where the outer two bigons are part of the structure of the inverse~\(\alpha_1^{-1}\), see Definition~\ref{def:isomorphism_equivalence}.  It can be checked that the coherence laws imply that~\(u\) must be this particular bigon.  Hence we may omit the unitary~\(u\) and the coherence law~\eqref{eq:group_coherence_unit} if~\(\Cattwo\) is a \(2\)\nb-groupoid.  That is, a weak action of~\(G\) on an object of~\(\Cattwo\) is equivalent to the pair \((\alpha,\omega)\) satisfying~\eqref{eq:group_coherence_associativity}.  We will repeat this argument below for the simpler case \(\Cattwo=\Csttwocat\).
\end{remark}

\subsubsection{Weak actions of groups by \texorpdfstring{\Star{}}{*-}representations and Busby--Smith twisted actions}
\label{sec:weak_action_Cst}

Now we study weak group actions in the strict \(2\)\nb-category~\(\Csttwocat\) of \(\Cst\)\nb-algebras with \Star{}representations as arrows and unitary intertwiners as bigons.  Here any bigon is invertible.  Furthermore, the isomorphisms in~\(\Csttwocat\) are the \Star{}isomorphisms.  Thus a strict group action on an object of~\(\Csttwocat\) is a group homomorphism from~\(G\) to the group of \Star{}automorphisms of a \(\Cst\)\nb-algebra~\(A\) --~this is the usual notion of a group action on a \(\Cst\)\nb-algebra.

What is a weak group action \((A,\alpha,\omega,u)\) on an object of~\(\Csttwocat\)?  Here~\(A\) is a \(\Cst\)\nb-algebra and \(\alpha_g\colon \Mult(A)\to\Mult(A)\) restricts to a \Star{}automorphism of~\(A\) by Proposition~\ref{pro:equivalence_Cstar} because~\(\alpha_g\) is an equivalence for any weak action.  The bigons \(\omega(g_1,g_2)\) and~\(u\) are unitary multipliers of~\(A\) that satisfy
\begin{alignat}{2}
  \label{eq:weak_action_omega}
  \omega(g_1,g_2) \cdot \alpha_{g_1}\bigl(\alpha_{g_2}(a)\bigr) \cdot \omega(g_1,g_2)^*
  &= \alpha_{g_1g_2}(a)
  &\qquad&\text{for all \(g_1,g_2\in G\), \(a\in A\),}\\
  \label{eq:weak_action_u}
  u\cdot a \cdot u^* &= \alpha_1(a)
  &\qquad&\text{for all \(a\in A\)}
\end{alignat}
because \(u\) and~\(\omega(g_1,g_2)\) are bigons \(1_A\Rightarrow \alpha_1\) and \(\alpha_{g_1}\alpha_{g_2}\Rightarrow \alpha_{g_1g_2}\), respectively.

The coherence laws \eqref{eq:group_coherence_unit} and~\eqref{eq:group_coherence_associativity} look more familiar if we express them in terms of the adjoints \(\omega^*(g_1,g_2)\defeq \omega(g_1,g_2)^*\).  This is what we will do from now on.  Recall that the vertical multiplication in~\(\Csttwocat\) is just multiplication of unitaries, while~\eqref{eq:horizprod_Cstartwocat} yields \(f \horizprod u = f(u)\) and \(u\horizprod f=u\) for an arrow~\(f\) and a unitary~\(u\), viewed as a bigon between some arrows it intertwines.

As a result, the two coherence laws in~\eqref{eq:group_coherence_unit} mean that
\begin{equation}
  \label{eq:weak_action_unit}
  \omega^*(1,g) = u,\qquad
  \omega^*(g,1) = \alpha_g(u)
\end{equation}
for all \(g\in G\); and the coherence law~\eqref{eq:group_coherence_associativity} amounts to the cocycle condition
\begin{equation}
  \label{eq:Busby_Smith_cocycle}
  \alpha_{g_1}\bigl(\omega^*(g_2,g_3)\bigr) \cdot \omega^*(g_1,g_2g_3)
  = \omega^*(g_1,g_2) \cdot \omega^*(g_1g_2,g_3)
\end{equation}
for all \(g_1,g_2,g_3\in G\).  In particular, \(u = \omega^*(1,1)\).

If we define \(u\defeq \omega^*(1,1)\), then \eqref{eq:weak_action_u} follows from~\eqref{eq:weak_action_omega} and the invertibility of~\(\alpha_1\), and the conditions~\eqref{eq:weak_action_unit} follow from~\eqref{eq:Busby_Smith_cocycle} for \(g_2=g_3=1\) and \(g_1=g_2=1\).  Thus a weak action of a group~\(G\) on~\(A\) is given by \Star{}automorphisms~\(\alpha_g\) for \(g\in G\) and unitaries~\(\omega^*(g_1,g_2)\) for \(g_1,g_2\in G\) that satisfy \eqref{eq:weak_action_omega} and~\eqref{eq:Busby_Smith_cocycle}.  This is called a \emph{Busby--Smith twisted dynamical system} in~\cite{Kaliszewski:Morita_twisted}.  The original definition of twisted actions by Busby and Smith in \cite{Busby-Smith:Representations_twisted_group}*{Definition 2.1} imposes the additional condition \(u=1\), that is, \(\alpha_1=\Id_A\) and \(\omega(g,1)=1=\omega(1,g)\).  Lemma~\ref{lem:weaken_unit} will show that any weak action is weakly isomorphic to one with \(\alpha_1=\Id_A\) and \(u=\omega(1,1)^*=1\) (weak isomorphism in \(\Csttwocat\) is the same as exterior equivalence).  Thus the two definitions of Busby--Smith twisted actions in \cites{Busby-Smith:Representations_twisted_group, Kaliszewski:Morita_twisted} are essentially equivalent.

\subsubsection{Group actions by correspondences and saturated Fell bundles}

Now we replace \(\Csttwocat\) by the correspondence category \(\Corrcat\) and study weak \emph{group actions by correspondences}.  We will show that such group actions are equivalent to saturated Fell bundles.  Since the category \(\Corrcat\) is weak, it is pointless to study strict group actions in \(\Corrcat\).

\begin{definition}
  \label{def:Fell_bundle}
  A \emph{Fell bundle} (see~\cite{Doran-Fell:Representations}) over a (discrete) group~\(G\) is a family of Banach spaces~\(A_g\) for \(g\in G\) with multiplication maps \(\mu(g_1,g_2)\colon A_{g_1}\times A_{g_2} \to A_{g_1g_2}\) and conjugate-linear \Star{}operations \(A_g \to A_{g^{-1}}\) that satisfy analogues of the usual conditions for a \(\Cst\)\nb-algebra: the multiplication is associative, the \Star{}operation is an involutive anti-homomorphism with \(\xi^*\xi\geq 0\) for all \(\xi\in A_g\), and the norm satisfies \(\norm{\xi}^2_{A_g} = \norm{\xi\xi^*}_{A_1} = \norm{\xi^*\xi}_{A_1}\) for all \(g\in G\).  A Fell bundle is called \emph{saturated} if the span of \(A_{g_1}\cdot A_{g_2}\) is dense in~\(A_{g_1g_2}\) for all \(g_1,g_2\in G\).
\end{definition}

\begin{theorem}
  \label{the:group_act_Corrcat}
  A group action by correspondences of a group~\(G\) on a \(\Cst\)\nb-algebra~\(A\) is equivalent to a saturated Fell bundle \((A_g)_{g\in G}\) over~\(G\) with a \(\Cst\)\nb-algebra isomorphism \(A\cong A_1\).
\end{theorem}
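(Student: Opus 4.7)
The plan is to set up mutually inverse constructions between saturated Fell bundles over~\(G\) and weak actions of~\(G\) on \(\Cst\)\nb-algebras in~\(\Corrcat\). The starting observation is that in a weak action \((\alpha,\omega,u)\), each~\(\alpha_g\) is an equivalence in~\(\Corrcat\), since the bigons \(\omega(g,g^{-1})\) and~\(u^{-1}\) combine to give an invertible bigon \(\alpha_g\otimes_A\alpha_{g^{-1}}\Rightarrow 1_A\), and similarly on the other side. By Proposition~\ref{pro:equivalence_correspondence}, every~\(\alpha_g\) is therefore an \(A,A\)\nb-imprimitivity bimodule, in particular a Banach space with compatible left and right \(A\)\nb-actions and \(A\)\nb-valued inner products.

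To go from a weak action to a Fell bundle, I set \(A_g\) to be the Banach space underlying~\(\alpha_g\), define the multiplication \(A_{g_1}\times A_{g_2}\to A_{g_1g_2}\) as the composite of the canonical map \(\alpha_{g_1}\times\alpha_{g_2}\to\alpha_{g_1}\otimes_A\alpha_{g_2}\) with the unitary intertwiner~\(\omega(g_1,g_2)\), and define the involution \(A_g\to A_{g^{-1}}\) via the canonical conjugate-linear identification of~\(\alpha_{g^{-1}}\) with the conjugate bimodule~\(\overline{\alpha_g}\) coming from the chosen isomorphism \(\alpha_g\otimes_A\alpha_{g^{-1}}\cong A\). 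The associativity of the multiplication is precisely the coherence law~\eqref{eq:group_coherence_associativity}; the unit axioms and the special cases \(\omega(1,g)\), \(\omega(g,1)\) agree with the rules~\eqref{eq:group_coherence_unit} after using~\(u^{-1}\) to identify~\(\alpha_1\) with the unit correspondence~\(1_A=A\); the \(\Cst\)\nb-identity \(\norm{\xi}^2=\norm{\xi^*\xi}\) and the positivity \(\xi^*\xi\geq 0\) are built into the definition of an imprimitivity bimodule, since under the above identifications \(\xi^*\eta\) is just the right \(A\)\nb-valued inner product and \(\xi\eta^*\) the left one. Saturation \(\overline{A_{g_1}\cdot A_{g_2}}=A_{g_1g_2}\) follows because \(\omega(g_1,g_2)\) is a unitary intertwiner out of the internal tensor product, whose image of simple tensors is total.

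Conversely, given a saturated Fell bundle \((A_g)_{g\in G}\), I set \(A\defeq A_1\), which is a \(\Cst\)\nb-algebra. Each~\(A_g\) becomes an \(A,A\)\nb-bimodule by Fell bundle multiplication, and the formulas \(\langle\xi,\eta\rangle_A\defeq\xi^*\eta\), \({}_A\langle\xi,\eta\rangle\defeq\xi\eta^*\) give full \(A\)\nb-valued inner products by saturation, turning~\(A_g\) into an \(A,A\)\nb-imprimitivity bimodule. Taking \(\alpha_g\defeq A_g\), \(u\defeq\Id_A\), and \(\omega(g_1,g_2)\) to be the unitary descending from Fell bundle multiplication \(A_{g_1}\times A_{g_2}\to A_{g_1g_2}\) to the internal tensor product (an isomorphism by saturation), yields a weak action: the associativity coherence~\eqref{eq:group_coherence_associativity} is the associativity of the Fell bundle multiplication, and \eqref{eq:group_coherence_unit} is immediate with \(u=1\). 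The two constructions are manifestly inverse to each other up to canonical isomorphism of Fell bundles and weak isomorphism (in the sense of the next subsection) of weak actions.

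The main obstacle is the involution in the forward direction: while \(\alpha_{g^{-1}}\) is evidently \emph{some} inverse of~\(\alpha_g\) in~\(\Corrcat\), one must pin down a specific conjugate-linear isomorphism \(\alpha_g\congto\alpha_{g^{-1}}\), \(\xi\mapsto\xi^*\), so that the resulting involution is involutive, conjugate-multiplicative, and satisfies \(\xi^*\xi\geq 0\). I plan to \emph{define} the involution by the requirement \(\omega(g,g^{-1})(\xi\otimes\xi^*)=u\bigl({}_A\langle\xi,\xi\rangle\bigr)\) and to verify its remaining properties using the coherence law~\eqref{eq:group_coherence_associativity} applied to triples of the form \((g_1,g_2,(g_1g_2)^{-1})\), \((g,g^{-1},g)\) and \((g^{-1},g,g^{-1})\); these identities force anti-multiplicativity and the double-involution identity up to the associators and unitors, whose compatibility is already guaranteed by Mac Lane coherence.
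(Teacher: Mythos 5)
Your proposal follows essentially the same route as the paper's proof: in both directions you use Proposition~\ref{pro:equivalence_correspondence} to identify each \(\alpha_g\) with an \(A,A\)\nb-imprimitivity bimodule, obtain the Fell bundle multiplication from~\(\omega\), read off saturation from the unitarity of~\(\omega\), and construct the involution from the duality pairing given by \(\omega(g,g^{-1})\) and~\(u\), with \((\xi^*)^*=\xi\) and \((\xi\eta)^*=\eta^*\xi^*\) extracted from the coherence law~\eqref{eq:group_coherence_associativity} applied to exactly the triples you name; this is precisely how the paper handles what you correctly identify as the main obstacle. Two small points of bookkeeping. First, with the paper's convention \(f_1\circ f_2\defeq f_2\otimes_B f_1\) in~\(\Corrcat\), the bigon \(\omega(g_1,g_2)\) is a unitary \(\alpha_{g_2}\otimes_A\alpha_{g_1}\Rightarrow\alpha_{g_1g_2}\), not \(\alpha_{g_1}\otimes_A\alpha_{g_2}\Rightarrow\alpha_{g_1g_2}\); so with your choice \(A_g\defeq\alpha_g\) the induced multiplication reverses the grading and would only give an anti-homomorphic bundle. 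The paper resolves this by setting \(A_g\defeq\alpha_{g^{-1}}\) and using \(\omega(g_2^{-1},g_1^{-1})\) for the product \(A_{g_1}\times A_{g_2}\to A_{g_1g_2}\); you should do the same (or flip the composition convention). Second, the single identity \(\omega(g,g^{-1})(\xi\otimes\xi^*)=u\bigl({}_A\langle\xi,\xi\rangle\bigr)\) does not by itself determine~\(\xi^*\); you need the polarised version against all \(\eta\), and an existence argument for such a \(\xi^*\), which the paper supplies by writing down the explicit isomorphism \(\hat v_g\colon A_g\congto A_{g^{-1}}^*\) induced by \(v_g\defeq u^{-1}\circ\omega(g^{-1},g)\). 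Neither point changes the architecture of your argument.
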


\begin{proof}
  Let~\((A_g)_{g\in G}\) be a saturated Fell bundle.  Then~\(A_1\) is a \(\Cst\)\nb-algebra, so that it makes sense to ask for a \(\Cst\)\nb-algebra isomorphism \(\varphi\colon A\congto A_1\).  Each~\(A_g\) is a Hilbert \(A_1,A_1\)\nb-bimodule via the multiplication \(A_1\times A_g\times A_1\to A_g\) and the right and left inner products \(\braket{\xi}{\eta}_\rightsub \defeq \xi^*\cdot\eta\) and \(\braket{\xi}{\eta}_\leftsub \defeq \xi\cdot\eta^*\) for \(\xi,\eta\in A_g\).  This Hilbert bimodule is an imprimitivity bimodule because the Fell bundle is saturated.  We use the isomorphism \(\varphi\colon A\to A_1\) to view~\(A_g\) as an \(A,A\)\nb-imprimitivity bimodule or, equivalently, as an invertible correspondence from~\(A\) to itself (Proposition~\ref{pro:equivalence_correspondence}), and let \(\alpha_g \defeq A_{g^{-1}}\).  The bigon \(u\colon A\Rightarrow \alpha_1\) is simply \(\varphi\colon A\to \alpha_1\) --~this \(\Cst\)\nb-algebra isomorphism is an isomorphism of correspondences as well.

  The associativity of the multiplication \(\mu\colon A_{g_1}\times A_{g_2}\to A_{g_1g_2}\) in the Fell bundle yields
  \[
  \mu(a_1\cdot\xi\cdot a_2,\eta\cdot a_3) =
  a_1\cdot\mu(\xi,a_2\cdot\eta)\cdot a_3
  \qquad\text{for all \(a_1,a_2,a_3\in A\).}
  \]
  Hence \(\mu\) induces an \(A\)\nb-bimodule homomorphism \(A_{g_1}\otimes_A A_{g_2} \to A_{g_1g_2}\) that is isometric with respect to both the left and the right inner products.  This isometry is unitary because the Fell bundle is saturated.  We let
  \[
  \omega(g_1,g_2)\colon \alpha_{g_2}\otimes_A \alpha_{g_1} =
  A_{g_2^{-1}} \otimes_A A_{g_1^{-1}} \to A_{g_2^{-1}g_1^{-1}} =
  \alpha_{g_1g_2}
  \]
  be this isomorphism for \(g_2^{-1}\) and~\(g_1^{-1}\).

  The diagram~\eqref{eq:group_coherence_unit} commutes for trivial reasons, and the associativity of the multiplication in the Fell bundle implies the commutativity of~\eqref{eq:group_coherence_associativity}.  Hence the data \((A,\alpha,\omega,u)\) defines a weak group action of~\(G\) in~\(\Corrcat\).

  Conversely, a group action on~\(A\) by correspondences consists of correspondences \(\alpha_g\colon A\to A\) and unitary bimodule homomorphisms
  \[
  u\colon 1_A \Rightarrow \alpha_1,\qquad
  \omega(g_1,g_2)\colon \alpha_{g_2} \otimes_A \alpha_{g_1}
  \Rightarrow \alpha_{g_1g_2}.
  \]

  The correspondences~\(\alpha_g\) are equivalences and hence \(A\)\nb-imprimitivity bimodules by Proposition~\ref{pro:equivalence_correspondence}.  The Banach spaces \(A_g\defeq \alpha_{g^{-1}}\) will, of course, become the fibres of our Fell bundle.  The bigon \(\varphi=u\) identifies \(A \cong \alpha_1\) as an imprimitivity bimodule and, in particular, as a Banach space.

  The unitary intertwiner \(\omega(g_2^{-1},g_1^{-1})\) yields a bilinear map
  \[
  \mu(g_1,g_2)\colon A_{g_1}\times A_{g_2} \to A_{g_1g_2},\qquad
  (\xi_1,\xi_2)\mapsto \omega(g_2^{-1},g_1^{-1})(\xi_1\otimes\xi_2).
  \]
  We use these maps~\(\mu(g_1,g_2)\) to define the multiplication in our Fell bundle.  The multiplication is associative because of the coherence law~\eqref{eq:group_coherence_associativity}.  Since \(\omega(g_1,g_2)\) is unitary, the range of~\(\mu(g_1,g_2)\) is dense in~\(A_{g_1g_2}\) for all \(g_1,g_2\in G\).

  In particular, the above multiplication turns~\(A_1\) into an algebra with multiplication provided by \(\mu(1, 1)\), which we denote by~\(\mu\) for short.  The coherence law~\eqref{eq:group_coherence_unit} implies \(\mu(\xi,\eta) = u^{-1}(\xi)\cdot\eta\) for all \(\xi, \eta\in A_1\).  Applying~\(u^{-1}\) to this equation and using that it is a bimodule homomorphism, we get \(u^{-1}\bigl(\mu(\xi,\eta)\bigr) = u^{-1}(\xi)\cdot u^{-1}(\eta)\), so that \(\varphi=u\colon A\to A_1\) is an algebra homomorphism for the multiplication~\(\mu\) on~\(\alpha_1\).

  Before we can construct the \Star{}operation of our Fell bundle, we must discuss inversion of imprimitivity bimodules.  To reduce confusion, we do this for two possibly different \(\Cst\)\nb-algebras \(A\) and~\(B\).  Let~\(\Hilm\) be an \(A,B\)\nb-imprimitivity bimodule.  The \emph{dual} \(B,A\)\nb-imprimitivity bimodule~\(\Hilm^*\) is~\(\Hilm\) as a set, with the same \(A\)- and \(B\)\nb-valued inner products, the conjugate-linear \(\C\)\nb-vector space structure, and the \(B,A\)-bimodule structure \(b\cdot \xi^*\cdot a\defeq a^*\cdot\xi\cdot b^*\) for \(a\in A\), \(b\in B\), \(\xi^*\in\Hilm^*\); here we write~\(\xi^*\) instead of~\(\xi\) to emphasise that we view~\(\xi\) as an element of~\(\Hilm^*\).  This dual imprimitivity bimodule is inverse to~\(\Hilm\) in the sense that the \(A\)- and \(B\)\nb-valued inner products on~\(\Hilm\) induce canonical isomorphisms \(\Hilm\otimes_B \Hilm^*\cong A\) and \(\Hilm^*\otimes_A \Hilm\cong B\) of Hilbert bimodules.

  The inverse of an imprimitivity bimodule is determined uniquely up to isomorphism.  We can make this more precise in our \(2\)\nb-categorical setup.  Let~\(\Hilm[K]\) be a \(B,A\)\nb-imprimitivity bimodule and let \(v\colon \Hilm\otimes_B\Hilm[K] \congto A\) be an isomorphism.  Then we get an induced isomorphism
  \[
  \hat{v}\colon \Hilm[K]
  \congto B\otimes_B \Hilm[K]
  \congto (\Hilm^*\otimes_A \Hilm)\otimes_B \Hilm[K]
  \congto \Hilm^*\otimes_A (\Hilm\otimes_B \Hilm[K])
  \xrightarrow[\cong]{v} \Hilm^*\otimes_A A
  \congto \Hilm^*,
  \]
  which maps \(\braket{\xi}{\eta}_B\cdot\zeta \mapsto \xi^*\cdot v(\eta\otimes\zeta) = (v(\eta\otimes\zeta)^*\cdot\xi)^*\) for \(\xi,\eta\in\Hilm\), \(\zeta\in\Hilm[K]\).

  We now apply this construction to the isomorphism
  \[
  v_g \defeq u^{-1}\circ\omega(g^{-1},g)\colon
  \alpha_g \otimes_A \alpha_{g^{-1}}
  \xrightarrow{\omega(g^{-1},g)}
  \alpha_{g^{-1}g} = \alpha_1
  \xrightarrow{u^{-1}}
  A.
  \]
  This yields a canonical isomorphism of \(A,A\)-imprimitivity bimodules \(\hat{v}_g\colon A_g\congto A_{g^{-1}}^*\).  Since \(A_{g^{-1}}^*\) is equal to~\(A_{g^{-1}}\) as a set, we may view~\(\hat{v}_g\) as a map from~\(A_g\) to~\(A_{g^{-1}}\).  This is the involution of our Fell bundle: \(\xi^*\defeq \hat{v}_g(\xi)\).  The map \(\xi\mapsto\xi^*\) is conjugate-linear by construction.

  Next we check \((\xi^*)^*=\xi\) for all \(\xi\in A_g\).  This follows from a more general observation in the case of an \(A,B\)\nb-imprimitivity bimodule~\(\Hilm\) and a \(B,A\)\nb-imprimitivity bimodule~\(\Hilm[K]\) with isomorphisms \(v\colon \Hilm\otimes_B\Hilm[K] \to A\) and \(w\colon \Hilm[K]\otimes_A\Hilm \to B\).  Then we get isomorphisms \(\hat{v}\colon \Hilm[K]\to\Hilm^*\) and \(\hat{w}\colon \Hilm\to\Hilm[K]^*\).  We may also view~\(\hat{w}\) as a map \(\hat{w}\colon \Hilm^*\to\Hilm[K]\).  When are \(\hat{w}\) and~\(\hat{v}\) inverse to each other?

  This is clearly the case if \(\Hilm[K]=\Hilm^*\) and \(v\) and~\(w\) are the canonical isomorphisms given by the inner products on~\(\Hilm\).  We may assume that \(\Hilm[K]=\Hilm^*\) and that~\(v\) is this canonical isomorphism because any triple \((\Hilm[K],v,w)\) is isomorphic to one of this form.  Then~\(\hat{w}\) is the inverse of~\(\hat{v}\) if and only if~\(w\) is equal to the \emph{canonical} isomorphism \(\Hilm^*\otimes_A\Hilm\congto B\) induced by the \(B\)\nb-valued inner product on~\(\Hilm\).  Equivalently, the map \(\Id_{\Hilm}\otimes_B w\colon \Hilm\otimes_B\Hilm[K]\otimes_A\Hilm\to\Hilm\) is equal to \(v\otimes_A\Id_{\Hilm}\).  This final formulation makes sense for general \(\Hilm[K]\) and~\(v\).  As a consequence, \(\hat{v}\) and~\(\hat{w}\) are inverse to each other if and only if
  \[
  v\otimes_A\Id_{\Hilm} = \Id_{\Hilm}\otimes_B w\colon
  \Hilm\otimes_B\Hilm[K]\otimes_A\Hilm \to \Hilm.
  \]
  Furthermore, in this case \(\braket{\hat{v}(\xi)}{\eta}_B = w(\xi\otimes\eta)\) for all \(\xi\in\Hilm[K]\), \(\eta\in\Hilm\) because this property is isomorphism-invariant and clearly holds if \(\Hilm[K]=\Hilm^*\) and \(v\) and~\(w\) are the canonical isomorphisms.

  We apply this in the situation \(\Hilm=\alpha_g\), \(\Hilm[K]=\alpha_{g^{-1}}\), \(v=v_g\), \(w=v_{g^{-1}}\).  The coherence laws for a weak group action imply \(v_g\otimes_A \Id_{\alpha_g} = \Id_{\alpha_g}\otimes_A v_{g^{-1}}\) because the following diagram commutes:
  \[
  \xymatrix@C+3em{
    \alpha_g\otimes_A\alpha_{g^{-1}}\otimes_A\alpha_g
    \ar@{=>}[r]^-{\omega(g^{-1},g)\otimes_A\alpha_g}
    \ar@{=>}[d]_{\alpha_g\otimes_A\omega(g,g^{-1})}&
    \alpha_{g^{-1}g} \otimes_A \alpha_g
    \ar@{=>}@/^1pc/[d]^{u^{-1}\otimes_A \alpha_g}
    \ar@{=>}[d]_{\omega(g,g^{-1}g)}\\
    \alpha_g \otimes_A \alpha_{gg^{-1}}
    \ar@{=>}[r]^{\omega(gg^{-1},g)}
    \ar@{=>}@/_1pc/[r]_{\alpha_g\otimes_A u^{-1}}&
    \alpha_g
  }
  \]
  Hence \(\hat{v}_g\) and~\(\hat{v}_{g^{-1}}\) are inverse to each other, that is, \((\xi^*)^*=\xi\) for all \(\xi\in A_g = \alpha_{g^{-1}}\).  Furthermore, we get \(\norm{\xi}^2 = \norm{\braket{\xi}{\xi}} = \norm{\xi^*\cdot\xi}\) for all \(\xi\in A_g\).

  If \(g,h\in G\), then the isomorphism
  \[
  \alpha_{g^{-1}}\otimes_A \alpha_{h^{-1}} \otimes_A \alpha_{gh}
  \xRightarrow{\omega(h^{-1},g^{-1})\otimes_A \alpha_{gh}}
  \alpha_{(gh)^{-1}} \otimes_A \alpha_{gh}
  \xRightarrow{\omega(gh,(gh)^{-1})}
  \alpha_1 \xRightarrow{u^{-1}} A
  \]
  induces the isomorphism \(A_g\otimes_A A_h \to A_{(gh)^{-1}}^*\), \((\xi\otimes\eta)\mapsto (\xi\cdot\eta)^*\).  The map \((\xi\otimes\eta)\mapsto \eta^*\xi^*\) is associated to the unitary
  \begin{multline*}
    \alpha_{g^{-1}}\otimes_A \alpha_{h^{-1}} \otimes_A \alpha_{gh}
    \xRightarrow{\alpha_{g^{-1}}\otimes_A\alpha_{h^{-1}}\otimes_A \omega(g,h)^{-1}}
    \alpha_{g^{-1}}\otimes_A \alpha_{h^{-1}} \otimes_A \alpha_h \otimes_A \alpha_g
    \\\xRightarrow{\alpha_g^{-1}\otimes_A \omega(h,h^{-1}) \otimes_A \alpha_g}
    \alpha_{g^{-1}} \otimes_A \alpha_1 \otimes_A \alpha_g
    \\\xRightarrow{\alpha_{g^{-1}} \otimes_A u^{-1}\otimes_A \alpha_g}
    \alpha_{g^{-1}} \otimes_A \alpha_g
    \xRightarrow{\omega(g,g^{-1})}
    \alpha_1 \xRightarrow{u^{-1}} A.
  \end{multline*}
  It follows from the coherence laws that both unitaries from \(\alpha_{g^{-1}}\otimes_A \alpha_{h^{-1}} \otimes_A \alpha_{gh}\) to~\(A\) agree.  Hence \((\xi\eta)^* = \eta^*\xi^*\) for all \(g,h\in G\), \(\xi\in A_g\), \(\eta\in A_h\).  In particular, it follows that~\(A_1\) is a \(\Cst\)\nb-algebra.

  Clearly, our constructions of saturated Fell bundles from group actions by correspondences and vice versa are inverse to each other, even up to equality and not only up to isomorphism.
\end{proof}

\begin{remark}
  \label{rem:Fell_from_action}
  A weak group action by \Star{}automorphisms is, in particular, a weak group action by correspondences and hence gives rise to a Fell bundle by Theorem~\ref{the:group_act_Corrcat}.  This yields the familiar construction of Fell bundles from Busby--Smith twisted actions (see~\cite{Exel:TwistedPartialActions}).
\end{remark}

\subsection{Weakly equivariant maps}
\label{sec:weakly_equivariant_maps}

Let~\(\Cattwo\) be a \(2\)\nb-category and let~\(G\) be a discrete group.  Let \((A,\alpha,\omega_A,u_A)\) and \((B,\beta,\omega_B,u_B)\) be two weak actions of~\(G\) on objects \(A\) and \(B\) of~\(\Cattwo\).  A strictly equivariant map is a map \(f\colon A\to B\) with equalities \(\beta_g f = f\alpha_g\) for all \(g\in G\).  The map~\(f\) and these equalities, turned into isomorphisms, provide the data of a \emph{transformation}:
\begin{itemize}
\item an arrow \(f\colon A\to B\) and

\item bigons \(V_g\colon \beta_g f \Rightarrow f\alpha_g\) for all \(g\in G\).
\end{itemize}
In addition, we impose the following two coherence laws.  First,
\begin{equation}
  \label{eq:weakly_equivariant_twist1}
  \begin{gathered}
    \xymatrix{
      \beta_1\cdot f \ar@{=>}[d]^{V_1} \ar@{<=}[r]^{u_B\horizprod f}&
      1_B \cdot f \ar@{<=>}[r]&
      f\\
      f\cdot\alpha_1 \ar@{<=}[r]_{f\horizprod u_A}&
      f\cdot 1_A \ar@{<=>}[ru]&
    }
  \end{gathered}
\end{equation}
corresponds to the two simplifications \(\beta_1\cdot f = 1_B \cdot f = f\) and \(\beta_1\cdot f = f\cdot \alpha_1 = f \cdot 1_A = f\) for a strictly equivariant map~\(f\).  Secondly, the two ways of simplifying \(\beta_{g_1}\beta_{g_2} f\) to~\(f\alpha_{g_1g_2}\) provide the following coherence law:
\begin{equation}
  \label{eq:weakly_equivariant_twist2}
  \begin{gathered}
    \xymatrix@C+2.5em{
      (\beta_{g_1}\beta_{g_2})f \ar@{<=>}[d]
      \ar@{=>}[r]^-{\omega_B(g_1,g_2)\horizprod f}&
      \beta_{g_1g_2}f \ar@{=>}[r]^{V_{g_1g_2}}&
      f\alpha_{g_1g_2}\\
      \beta_{g_1}(\beta_{g_2}f)
      \ar@{=>}[d]_{\beta_{g_1}\horizprod V_{g_2}}&&
      f(\alpha_{g_1}\alpha_{g_2})
      \ar@{=>}[u]_{f\horizprod\omega_A(g_1,g_2)}\\
      \beta_{g_1}(f\alpha_{g_2})\ar@{<=>}[r]&
      (\beta_{g_1}f)\alpha_{g_2}
      \ar@{=>}[r]_{V_{g_1}\horizprod\alpha_{g_2}}&
      (f\alpha_{g_1})\alpha_{g_2} \ar@{<=>}[u]
    }
  \end{gathered}
\end{equation}
Recall that the unlabeled isomorphisms correspond to the canonical invertible bigons~\eqref{eq:weak_two-category_bigons}, which are identities for strict \(2\)\nb-categories.  The definition above is equivalent to the definition of a transformation between morphisms of weak categories in~\cite{Leinster:Basic_Bicategories}.  Thus we need no more coherence conditions than the two above.  Indeed, the two coherence conditions above already imply the following: given any word \((g_1,\dotsc,g_n)\) in~\(G\) and a subset~\(I\) with \(g_i=1\) for \(i\in I\), all bigons \(\beta_{g_1}\dotsb\beta_{g_n} f\Rightarrow f\alpha_{g_1\dotsm g_n}\) constructed out of the natural bigons are equal.

The transformation is called \emph{strong} if the bigons~\(V_g\) are invertible, and \emph{strict} if the bigons~\(V_g\) are identities.  The strict transformations are simply arrows \(A\to B\) that intertwine the additional structure \((\omega_A,u_A)\) and \((\omega_B,u_B)\) and deserve to be called \emph{equivariant maps} between weak group actions.  General transformations will be also called \emph{weakly equivariant maps} in this work. The distinction between transformations and strong transformations does not concern us because all bigons in \(\Csttwocat\) and \(\Corrcat\) are invertible, anyway.

\begin{definition}
  \label{def:equivalence}
  Given a weak action \((B,\beta,\omega,u)\) and a family of \emph{invertible} bigons \(V_g\colon \beta_g\Rightarrow\beta'_g\) for some family of invertible arrows \(\beta_g'\colon B\to B\), we let
  \[
  u' \defeq V_1 u\colon 1_B \xRightarrow{u} \beta_1
  \xRightarrow{V_1} \beta'_1
  \]
  and
  \[
  \omega'(g_1,g_2)\defeq \Bigl( \beta'_{g_1}\cdot\beta'_{g_2}
  \xRightarrow{V_{g_1}^{-1}\horizprod V_{g_2}^{-1}}
  \beta_{g_1}\cdot\beta_{g_2} \xRightarrow{\omega(g_1,g_2)}
  \beta_{g_1g_2} \xRightarrow{V_{g_1g_2}} \beta'_{g_1g_2} \Bigr).
  \]
  It is routine to check the coherence laws \eqref{eq:group_coherence_unit} and~\eqref{eq:group_coherence_associativity}, so that \((B,\beta',\omega',u')\) is another weak action of~\(G\) on~\(B\).  By construction, the identity map~\(\Id_B\) with the bigons~\((V_g)\) is a strong transformation from \((B,\beta',\omega',u')\) to \((B,\beta,\omega,u)\).

  In this event, we call~\(V\) an \emph{equivalence} from \((\beta',\omega',u')\) to \((\beta,\omega,u)\).
\end{definition}

Let us specialise this to \(\Cattwo=\Csttwocat\).  Here the \emph{strict transformations} (equivariant maps) are simply non-degenerate \Star{}homomorphisms \(f\colon A\to\Mult(B)\) with
\[
f\circ\alpha_g = \beta_g\circ f,\qquad f\bigl(\omega_A(g_1,g_2)\bigr) = \omega_B(g_1,g_2)\qquad\text{and}\qquad f(u_A)=u_B.
\]
Since all bigons are invertible, there is no difference between transformations and strong transformations.  A transformation from~\(A\) to~\(B\) consists of a non-degenerate \Star{}homomorphism \(f\colon A\to\Mult(B)\) and unitaries \(V_g\in\Mult(B)\) with
\begin{equation}
  \label{eq:transformation_intertwine}
  V_g \beta_g\bigl(f(a)\bigr)V_g^* = f\bigl(\alpha_g(a)\bigr)
  \qquad\text{for all \(g\in G\)}
\end{equation}
because~\(V_g\) is a bigon from~\(\beta_gf\) to~\(f\alpha_g\).

When we plug in the definitions of horizontal and vertical products in~\(\Csttwocat\), the coherence laws \eqref{eq:weakly_equivariant_twist1} and~\eqref{eq:weakly_equivariant_twist2} amount to the requirements \(V_1 \cdot u_B = f(u_A) \) and
\begin{equation}
  \label{eq:transformation_cocycle}
  f\bigl(\omega_A(g_1,g_2)\bigr) \cdot V_{g_1} \cdot
  \beta_{g_1}(V_{g_2}) = V_{g_1g_2} \cdot \omega_B(g_1,g_2)
  \qquad\text{for all \(g_1,g_2\in G\),}
\end{equation}
where~\(\cdot\) is the multiplication of unitary elements in~\(\Mult(B)\).  (The unlabelled invertible bigons in~\eqref{eq:weakly_equivariant_twist2} are identities in~\(\Csttwocat\).)

Recall that the unitaries \(u_A\) and~\(u_B\) are redundant because \(u_A=\omega_A(1,1)^*\) and \(u_B=\omega_B(1,1)^*\) and that the adjoints \(\omega_A^*\) and~\(\omega_B^*\) yield Busby--Smith twisted actions.  The condition~\eqref{eq:transformation_cocycle} for \(g_1=g_2=1\) specialises to \(f(u_A^*)\cdot V_1 u_BV_1u_B^* = V_1u_B^*\).  Hence the coherence law \(V_1\cdot u_B = f(u_A) \) is redundant.  Thus \(f\) and \((V_g)_{g\in G}\) form a transformation from \((A,\alpha,\omega_A,u_A)\) to \((B,\beta,\omega_B,u_B)\) if and only if they satisfy~\eqref{eq:transformation_intertwine} and the cocycle condition
\[
V_{g_1} \cdot \beta_{g_1}(V_{g_2}) \cdot \omega_B^*(g_1,g_2) =
f\bigl(\omega^*_A(g_1,g_2)\bigr) \cdot V_{g_1g_2}
\qquad\text{for all \(g_1,g_2\in G\).}
\]

A \emph{strictly equivariant map} is a transformation \((f,V)\) with \(V_g=1\) for all \(g\in G\).  The conditions above become \(f\bigl(\alpha_g(a)\bigr) = \beta_g\bigl(f(a)\bigr)\) for all \(a\in A\), \(g\in G\), and \(\omega^*_B(g_1,g_2) = f\bigl(\omega^*_A(g_1,g_2)\bigr)\), that is, \(f\) intertwines the group actions and preserves the twists.

Let \((\beta',\omega')\) and \((\beta,\omega)\) be weak actions of~\(G\) on the same \(\Cst\)\nb-algebra~\(B\).  Recall that an equivalence~\(V\) between them is a transformation \((f,V)\) with \(f=\Id_B\).  This means that we are given unitaries \(V_g\in\Mult(B)\) for all \(g\in G\) that satisfy
\begin{alignat}{2}
  \label{eq:outer_equivalence_intertwine}
  \beta_g'(b) &= V_g \beta_g(b) V_g^*
  &\quad&\text{for all \(b\in B\), \(g\in G\),}\\
  \label{eq:outer_equivalence_cocycle}
  \omega^{\prime *}(g_1,g_2) &=
  V_{g_1} \cdot \beta_{g_1}(V_{g_2}) \cdot
  \omega^*(g_1,g_2) \cdot V_{g_1g_2}^*
  &\quad&\text{for all \(b\in B\), \(g_1,g_2\in G\).}
\end{alignat}
Two weak actions are called equivalent if there is an equivalence between them.  This is the notion of exterior equivalence for Busby-Smith twisted actions in \cite{Busby-Smith:Representations_twisted_group}*{Definition~2.4}.

The bigons~\(V_g\) are, in particular, unitary multipliers of~\(B\) and provide bigons \(\beta_g\Rightarrow\beta_g'\) for some~\(\beta_g'\).  Hence the construction in Definition~\ref{def:equivalence} yields:

\begin{lemma}
  \label{lem:decompose_strong_strict_equivalence}
  If \(\Cattwo\) is~\(\Csttwocat\), then a transformation \((f,V)\) from \((A,\alpha,\omega_A,u_A)\) to \((B,\beta,\omega_B,u_B)\) decomposes into a strict transformation \((f,1)\) from \((A,\alpha,\omega_A,u_A)\) to \((B,\beta',\omega'_B,u'_B)\) and an equivalence~\(V\) from \((B,\beta',\omega'_B,u'_B)\) to \((B,\beta,\omega_B,u_B)\).
\end{lemma}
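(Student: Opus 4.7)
The plan is to apply Definition~\ref{def:equivalence} to the bigons~$V_g$ themselves, using the new arrows $\beta'_g\defeq\Ad_{V_g}\circ\beta_g$ on~$B$. Since each $V_g$ is a unitary multiplier of~$B$ and $\beta_g$ is a \Star{}automorphism of~$B$ (by Proposition~\ref{pro:equivalence_Cstar}), $\beta'_g$ is again an arrow in~$\Csttwocat$ and $V_g$ is an invertible bigon $\beta_g\Rightarrow\beta'_g$. Definition~\ref{def:equivalence} then produces a weak action $(B,\beta',\omega'_B,u'_B)$ together with an equivalence~$V$ from it to $(B,\beta,\omega_B,u_B)$. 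Spelling out the formulas in~$\Csttwocat$ using~\eqref{eq:horizprod_Cstartwocat} and passing to adjoints, one finds
\[
u'_B = V_1\cdot u_B, \qquad \omega'^*_B(g_1,g_2) = V_{g_1}\cdot\beta_{g_1}(V_{g_2})\cdot\omega^*_B(g_1,g_2)\cdot V^*_{g_1g_2}.
\]

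It then remains to verify that $(f,1)$ is a strict transformation from $(A,\alpha,\omega_A,u_A)$ to $(B,\beta',\omega'_B,u'_B)$. As explained in the discussion preceding the lemma, this amounts to three identities: $f\circ\alpha_g = \beta'_g\circ f$, $f(\omega^*_A(g_1,g_2)) = \omega'^*_B(g_1,g_2)$, and $f(u_A) = u'_B$. The first unfolds to $f(\alpha_g(a)) = V_g\beta_g(f(a))V_g^*$, which is exactly~\eqref{eq:transformation_intertwine}. The second, after substituting the formula above for~$\omega'^*_B$, becomes the cocycle~\eqref{eq:transformation_cocycle} rewritten for the adjoints, which the paper has already derived from~\eqref{eq:weakly_equivariant_twist2}. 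The third is the equality $V_1\cdot u_B = f(u_A)$ extracted in the text from~\eqref{eq:weakly_equivariant_twist1}. All three are thus simply re-readings of the data defining the transformation $(f,V)$.

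Finally, to justify the word ``decomposes'', one checks that $(f,V)$ is the composite of $(f,1)$ with the equivalence~$V$: the composite bigon at~$g$ is the vertical product of $V_g\horizprod\Id_f$ with the identity bigon $\beta'_g\cdot f = f\cdot\alpha_g$, and by~\eqref{eq:horizprod_Cstartwocat} one has $V_g\horizprod\Id_f = V_g\cdot\beta_g(1) = V_g$. The argument presents no genuine obstacle; it is a pure restructuring of the data, and the only calculation that requires care is the consistent tracking of adjoints in the passage $\omega_B\leftrightarrow\omega^*_B$ so that the formula produced by Definition~\ref{def:equivalence} lines up exactly with the cocycle the transformation $(f,V)$ is already known to satisfy.
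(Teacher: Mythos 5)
Your proposal is correct and follows exactly the route the paper intends: the paper's entire proof is the remark that the $V_g$ are invertible bigons $\beta_g\Rightarrow\beta_g'$ so that Definition~\ref{def:equivalence} applies, and your verification that $(f,1)$ is then strict (via \eqref{eq:transformation_intertwine}, the adjointed form of \eqref{eq:transformation_cocycle}, and $V_1\cdot u_B=f(u_A)$) together with the check that the composite recovers $(f,V)$ just makes explicit what the paper leaves to the reader. No discrepancies.
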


As a result, transformations between Busby--Smith twisted actions combine strictly equivariant maps with equivalence of twisted actions.

\begin{lemma}
  \label{lem:weaken_unit}
  If~\(\Cattwo\) is~\(\Csttwocat\), then any weak action \((\beta,\omega,u)\) on a \(\Cst\)\nb-algebra~\(B\) is equivalent to a weak action \((\beta',\omega',u')\) with \(\beta'_1=\Id_B\) and \(u'=1\).
\end{lemma}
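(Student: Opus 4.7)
The plan is to construct the equivalence by invoking Definition~\ref{def:equivalence} with a family $(V_g)_{g\in G}$ that differs from the identity only at $g=1$. Specifically, I will take $V_1 \defeq u^* \in \U\Mult(B)$, which is a unitary and hence an invertible bigon out of $\beta_1$, and $V_g \defeq 1$ for every $g\neq 1$. Under Definition~\ref{def:equivalence}, the target automorphisms $\beta'_g$ are then determined: for $g\neq 1$ we have $\beta'_g=\beta_g$, and for $g=1$ the formula $\beta'_1(b)=V_1\cdot\beta_1(b)\cdot V_1^*$ combined with \eqref{eq:weak_action_u}, which says $\beta_1(b)=u\cdot b\cdot u^*$, gives
\[
\beta'_1(b) = u^*\cdot(u\cdot b\cdot u^*)\cdot u = b,
\]
so $\beta'_1=\Id_B$ as required.

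The next step is to read off the new unit bigon from Definition~\ref{def:equivalence}: by construction $u' = V_1\cdot u = u^*\cdot u = 1$. Since~$V_1$ was invertible, Definition~\ref{def:equivalence} also furnishes the multiplication bigons $\omega'(g_1,g_2)$ and guarantees that $(B,\beta',\omega',u')$ satisfies the coherence laws \eqref{eq:group_coherence_unit} and~\eqref{eq:group_coherence_associativity}, and that $(V_g)_{g\in G}$ is an equivalence from $(B,\beta',\omega',u')$ to $(B,\beta,\omega,u)$ in the sense of that definition.

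There is no serious obstacle here; the argument is essentially a one-line application of Definition~\ref{def:equivalence} once one spots that the bigon~$u$ can be absorbed into~$V_1$. The only point of genuine content is the calculation $\beta'_1=\Id_B$, which relies on the identity $\beta_1=\Ad_u$ contained in~\eqref{eq:weak_action_u}. I would not bother to write out explicit formulas for $\omega'(g_1,g_2)$, since Definition~\ref{def:equivalence} already certifies that it is a cocycle — although one may note in passing that specialising \eqref{eq:group_coherence_unit} to the new action recovers $\omega'(g,1)=\omega'(1,g)=1$, showing that the lemma indeed reduces a general weak action to the normalised form used in \cite{Busby-Smith:Representations_twisted_group}*{Definition~2.1}.
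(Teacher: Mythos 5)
Your proof is correct and is essentially the paper's own argument: both consist of a one-line application of Definition~\ref{def:equivalence} with $V_1 = u^*$, the only difference being that the paper takes the constant family $V_g \defeq u^{-1}$ for \emph{all} $g\in G$ rather than setting $V_g=1$ for $g\neq 1$. That constant choice is marginally preferable because it transfers verbatim to the continuous setting of Section~\ref{sec:topological_group}, where your family $g\mapsto V_g$ would fail to be continuous at $1$ for a non-discrete group unless $u=1$ already.
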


\begin{proof}
  Let \(V_g\defeq u^{-1}\) for all \(g\in G\).  These unitary multipliers provide invertible bigons \(V_g\colon \beta_g\Rightarrow\beta_g'\) for some arrows~\(\beta_g'\colon B\to B\).  Definition~\ref{def:equivalence} provides an equivalence to a weak action \((\beta',\omega',u')\) with \(u'\defeq V_1 u=1\) and hence \(\beta'_1=\Id_B\).
\end{proof}

\begin{example}
  \label{exa:covariant_representation_as_weakly_equivariant_map}
  Now we specialise to the case where~\(B\) carries the trivial action, \(\beta_g=\Id_B\) for all \(g\in G\) and \(\omega_B^*(g_1,g_2)=1\) for all \(g_1,g_2\in G\).  In this case, the conditions for a transformation are
  \[
  f\bigl(\alpha_g(a)\bigr) = V_g f(a)V_g^*,\qquad
  V_{g_1} \cdot V_{g_2} = f\bigl(\omega^*_A(g_1,g_2)\bigr) \cdot V_{g_1g_2}.
  \]
  Such a pair \((f,V)\) is called a \emph{covariant representation} of a weak group action, compare \cite{Busby-Smith:Representations_twisted_group}*{Definition after Theorem 3.2} for Busby--Smith twisted actions.
\end{example}

Next, we describe transformations between weak actions of~\(G\) by correspondences, that is, we specialise the concepts above to the \(2\)\nb-category~\(\Corrcat\).  By Theorem~\ref{the:group_act_Corrcat}, weak actions of~\(G\) by correspondences are equivalent to saturated Fell bundles over~\(G\).  We are going to relate transformations to Morita equivalences of saturated Fell bundles.

\begin{definition}
  \label{def:Morita_equivalence_Fell_bundles}
  A \emph{Morita equivalence} between two saturated Fell bundles \(A=(A_g)_{g\in G}\) and \(B=(B_g)_{g\in G}\) over a (discrete) group~\(G\) is a Banach bundle \(\Gamma = (\Gamma_g)_{g\in G}\) over~\(G\) such that
  \begin{itemize}
  \item there is a non-degenerate \(G\)\nb-grading preserving \(A,B\)-bimodule structure on~\(\Gamma\) in the sense that there are bilinear maps \(A_{g_1}\times\Gamma_{g_2}\to \Gamma_{g_1g_2}\) and \(\Gamma_{g_1}\times B_{g_2}\to \Gamma_{g_1g_2}\), written multiplicatively, such that
    \begin{itemize}
    \item \(a_1\cdot (a_2\cdot\xi)=(a_1a_2)\cdot\xi\) for all \(a_1,a_2\in A\) and \(\xi\in \Gamma\);
    \item \((\xi\cdot b_1)\cdot b_2=\xi\cdot(b_1b_2)\) for all \(b_1,b_2\in B\) and \(\xi\in \Gamma\);
    \item \((a\cdot\xi)\cdot b=a\cdot(\xi\cdot b)\) for all \(a\in A\), \(\xi\in \Gamma\), and \(b\in B\); and
    \item the multiplication map \(A_{g_1}\otimes\Gamma_{g_2}\otimes B_{g_3}\to \Gamma_{g_1g_2g_3}\) has dense range for all \(g_1,g_2,g_3\in G\).
    \end{itemize}
  \item there are bilinear maps (left and right \emph{inner products})
    \[
    \braket{\cdot}{\cdot}_A\colon
    \Gamma_{g_1}\times\Gamma_{g_2}^*\to
    A_{g_1g_2^{-1}}
    \quad\text{and}\quad
    \braket{\cdot}{\cdot}_B\colon
    \Gamma_{g_1}^*\times\Gamma_{g_2}\to B_{g_1^{-1}g_2}
    \]
    for all \(g_1,g_2\in G\), such that
    \begin{itemize}
    \item \(\braket{\xi\cdot b}{\eta}_A = \braket{\xi}{\eta\cdot b^*}_A\) and \(\braket{a\cdot\xi}{\eta}_B=\braket{\xi}{a^*\cdot\eta}_B\) for all \(\xi,\eta\in\Gamma\), \(a\in A\), \(b\in B\);
    \item \(\braket{\xi}{\eta}_B^*=\braket{\eta}{\xi}_B\) and \(\braket{\xi}{\eta}_A^*=\braket{\eta}{\xi}_A\) for all \(\xi,\eta\in \Gamma\);
    \item \(\braket{\xi}{\xi}_B\ge0\), \(\braket{\xi}{\xi}_A\ge0\), and \(\norm{\xi}^2 = \norm{\braket{\xi}{\xi}_A} = \norm{\braket{\xi}{\xi}_B}\) for all \(\xi\in\Gamma\);
    \item \(\braket{\xi}{\eta\cdot b}_B = \braket{\xi}{\eta}_B\cdot b\) and \(\braket{a\cdot\xi}{\eta}_A = a\cdot \braket{\xi}{\eta}_A\) for all \(\xi,\eta\in \Gamma\), \(a\in A\) and \(b\in B\); and
    \item \(\braket{\xi}{\eta}_A\cdot\zeta = \xi\cdot \braket{\eta}{\zeta}_B\) for all \(\xi,\eta,\zeta\in \Gamma\);
    \item \(\braket{\Gamma_1}{\Gamma_1}_A\) is dense in~\(A_1\) and \(\braket{\Gamma_1}{\Gamma_1}_B\) is dense in~\(B_1\).
    \end{itemize}
  \end{itemize}
  A \emph{correspondence} of Fell bundles from~\(A\) to~\(B\) is defined similarly, dropping the \(A\)\nb-valued inner products, all conditions that involve it, and the fullness of the \(B_1\)\nb-valued inner product.
\end{definition}

\begin{example}
  \label{exa:identity_bimodule_Fell}
  Let \(A=\Gamma=B\) for a saturated Fell bundle~\(B\), equip~\(\Gamma\) with the given bimodule structure and the inner products \(\braket{\xi}{\eta}_A \defeq \xi\eta^*\), \(\braket{\xi}{\eta}_B \defeq \xi^*\eta\).  This is the identity Morita equivalence on~\(B\).
\end{example}

Morita equivalence of saturated Fell bundles was introduced by Yamagami in~\cite{Yamagami:IdealStructure} and later used by Muhly and Williams in \cites{Muhly:BundlesGroupoids, Muhly-Williams:Equivalence.FellBundles}.  These articles work with Fell bundles over groupoids, where the notion of Morita equivalence becomes more interesting because it also relates Fell bundles over possibly different groupoids.  Nevertheless, the underlying groupoids are always Morita equivalent by definition.  In the group case, this does not appear because Morita equivalence in this case reduces to isomorphism so that there is no loss of generality in assuming that the two groups are the same as we did above.

\begin{proposition}
  \label{pro:Morita_Fell_transformation}
  Under the equivalence between weak actions of~\(G\) by correspondences and saturated Fell bundles over~\(G\) described in Theorem~\textup{\ref{the:group_act_Corrcat}}, a transformation between two weak actions of~\(G\) is equivalent to a correspondence between the associated saturated Fell bundles.  Equivalences of weak actions correspond to Morita equivalences between the associated Fell bundles.
\end{proposition}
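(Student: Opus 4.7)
The plan is to mimic the proof of Theorem~\ref{the:group_act_Corrcat}, upgrading objects to Fell bundles by extending them with a third direction corresponding to the transformation. Write $A_g\defeq \alpha_{g^{-1}}$ and $B_g\defeq \beta_{g^{-1}}$ for the fibres of the two associated saturated Fell bundles. Given a transformation $(f,V)$ from $(A,\alpha,\omega_A,u_A)$ to $(B,\beta,\omega_B,u_B)$, I define
\[
\Gamma_g \defeq A_g\otimes_A f,
\]
a Hilbert \(B\)\nb-module and $A,B$\nb-correspondence. The invertible bigon
\[
V_{g^{-1}}\colon f\otimes_B B_g = \beta_{g^{-1}}\cdot f\congto f\cdot\alpha_{g^{-1}} = A_g\otimes_A f
\]
yields an equivalent description $\Gamma_g \cong f\otimes_B B_g$ that will be used to encode the right $B$\nb-action and the $B$\nb-valued inner products cleanly.

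The Fell-bundle-correspondence structure is built from $\omega_A$, $\omega_B$, the bigons $V_g$, and the inner product on $f$. Left multiplication $A_{g_1}\times\Gamma_{g_2}\to\Gamma_{g_1g_2}$ is induced by the Fell bundle multiplication $\omega_A(g_2^{-1},g_1^{-1})\colon A_{g_1}\otimes_A A_{g_2}\congto A_{g_1g_2}$, tensored on the right with $\Id_f$. Right multiplication $\Gamma_{g_1}\times B_{g_2}\to\Gamma_{g_1g_2}$ is defined in the $f\otimes_B B_\bullet$-picture using $\omega_B(g_2^{-1},g_1^{-1})\colon B_{g_1}\otimes_B B_{g_2}\congto B_{g_1g_2}$; equivalently, in the $A_\bullet\otimes_A f$-picture, one first passes the $B_{g_2}$-factor through $f$ via $V_{g_2^{-1}}$ and then multiplies with $\omega_A$. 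The $B$\nb-valued inner product pairs the Fell bundle pairing $(b_1,b_2)\mapsto b_1^*b_2\in B_{g_1^{-1}g_2}$ with $\braket{\cdot}{\cdot}_B$ on $f$, through the $f\otimes_B B_\bullet$-identification.

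The axioms of a correspondence of Fell bundles then reduce to those of $f$ as a correspondence, the Fell bundle axioms for $A,B$, and the transformation coherence. The most delicate point, and the main obstacle, is the left-right commutativity $(a\cdot\xi)\cdot b = a\cdot(\xi\cdot b)$: this is precisely where \eqref{eq:weakly_equivariant_twist2} enters, saying in essence that the two ways of passing $B$\nb-factors through $f$ via $V$ agree after correction by $\omega_A$ and $\omega_B$. Associativity of left and right multiplications is \eqref{eq:group_coherence_associativity} for $\alpha$ and $\beta$; compatibility with the unit factors comes from \eqref{eq:group_coherence_unit} and \eqref{eq:weakly_equivariant_twist1}. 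Sesquilinearity, positivity, the \(\Cst\)\nb-norm identity, and density of $A_{g_1}\otimes\Gamma_{g_2}\otimes B_{g_3}$ in $\Gamma_{g_1g_2g_3}$ are inherited from the correspondence $f$ together with saturation of $A$ and $B$.

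For the converse, given a correspondence $\Gamma$ of Fell bundles, put $f\defeq \Gamma_1$, an $A,B$\nb-correspondence. Non-degeneracy of the bimodule structure and the inner product axioms imply that the multiplication maps $A_g\otimes_A\Gamma_1\to\Gamma_g$ and $\Gamma_1\otimes_B B_g\to\Gamma_g$ are unitary bimodule isomorphisms. Their composition delivers invertible bigons $V_{g^{-1}}\colon f\otimes_B B_g\congto A_g\otimes_A f$, that is, $V_g\colon \beta_g f\Rightarrow f\alpha_g$; associativity of Fell bundle multiplication on $\Gamma$ gives precisely \eqref{eq:weakly_equivariant_twist1} and \eqref{eq:weakly_equivariant_twist2}. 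The two constructions are inverse up to canonical isomorphism. Finally, an equivalence of weak actions is a transformation $(f,V)$ with $f$ an equivalence in $\Corrcat$, which by Proposition~\ref{pro:equivalence_correspondence} is exactly an $A,B$\nb-imprimitivity bimodule. Then $\Gamma_1=f$ carries an $A$\nb-valued inner product, and the formula
\[
\braket{a\otimes\xi}{a'\otimes\eta}_A \defeq a\cdot \braket{\xi}{\eta}_A\cdot (a')^*\in A_{g_1g_2^{-1}}
\]
(using Fell bundle multiplication and involution) extends it to each $\Gamma_g$, making $\Gamma$ a Morita equivalence; conversely, a Morita equivalence $\Gamma$ has $\Gamma_1$ as an $A,B$\nb-imprimitivity bimodule.
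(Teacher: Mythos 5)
Your proof follows essentially the same route as the paper's: set \(\Gamma_g \defeq A_g\otimes_A f\), transport the right \(B\)\nb-action and the \(B\)\nb-valued inner product through the identification \(\Gamma_g\cong f\otimes_B B_g\) furnished by \(V\), check the axioms of a Fell-bundle correspondence against the coherence laws, and invert the construction by taking \(f\defeq\Gamma_1\) and reading off the bigons \(V_g\) from the multiplication maps \(A_g\otimes_A\Gamma_1\to\Gamma_1\otimes_B B_g\), which are unitary by saturation and non-degeneracy. The only cosmetic difference is where \eqref{eq:weakly_equivariant_twist2} is spent: the paper defines the right action entirely in the \(A_\bullet\otimes_A f\) picture, so that \eqref{eq:weakly_equivariant_twist2} becomes right-module associativity, whereas you define it via \(\omega_B\) in the \(f\otimes_B B_\bullet\) picture, which moves the content of \eqref{eq:weakly_equivariant_twist2} into the compatibility of the two pictures and hence into the left--right commutativity you single out; both bookkeepings are valid.
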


\begin{proof}
  Let \((A,\alpha,\omega_A)\) and \((B,\beta,\omega_B)\) be weak actions of~\(G\).  Recall that the associated saturated Fell bundles have the fibres \(A_g\defeq \alpha_{g^{-1}}\) and \(B_g\defeq \beta_{g^{-1}}\) at~\(g\), respectively.  A transformation from \((A,\alpha,\omega_A)\) to \((B,\beta,\omega_B)\) involves a correspondence~\(\gamma\) from~\(A\) to~\(B\) together with isomorphisms \(V_g\colon \gamma\otimes_B\beta_g \congto \alpha_g\otimes_A\gamma\).  We let
  \[
  \Gamma_g \defeq \alpha_{g^{-1}} \otimes_A \gamma;
  \]
  this is another correspondence from~\(A\) to~\(B\).  Using the isomorphisms \(A\cong\alpha_1 = A_1\) and \(B\cong\beta_1 = B_1\), we view~\(\Gamma_g\) as a correspondence from~\(A_1\) to~\(B_1\).  Using the unitaries \(V\), \(\omega_A\), and~\(\omega_B\), we may construct unitary operators
  \begin{align*}
    A_{g_1} \otimes_A \Gamma_{g_2} &\defeq
    \alpha_{g_1^{-1}}  \otimes_B \alpha_{g_2^{-1}} \otimes_A \gamma
    \xrightarrow{\omega_A}  \alpha_{g_2^{-1}g_1^{-1}} \otimes_B \gamma
    \cong \Gamma_{g_1g_2},\\
    \Gamma_{g_1} \otimes_B B_{g_2} &\defeq
    \begin{multlined}[t]
      \alpha_{g_1^{-1}} \otimes_A \gamma \otimes_B \beta_{g_2^{-1}}
      \xrightarrow{V} \alpha_{g_1^{-1}}\otimes_A
      \alpha_{g_2^{-1}}\otimes_A \gamma
      \\\xrightarrow{\omega_A}  \alpha_{g_2^{-1}g_1^{-1}}\otimes_A\gamma
      \cong \Gamma_{g_1g_2}.
    \end{multlined}
  \end{align*}
  The left \(A_1\)\nb-module structure so defined is evidently equal to the given one.  The same assertion for the right \(B_1\)\nb-module structures is equivalent to the coherence law~\eqref{eq:weakly_equivariant_twist1}.  The associativity of~\(\omega_A\) implies that the maps \(A\otimes \Gamma\to \Gamma\) form a left module structure.

  The associativity of~\(\omega_B\) and the interchange law imply that the left \(A\)- and right \(B\)\nb-module structures on~\(\Gamma\) commute, that is, the following diagram commutes
  \[
  \xymatrix{
    A_{g_1} A_{g_2}\Gamma B_{g_3} \ar@{=>}[r]^{\omega_A}
    \ar@{=>}[d]_{V}&
    A_{g_1g_2}\Gamma B_{g_3} \ar@{=>}[d]^{V}\\
    A_{g_1} A_{g_2} A_{g_3}\gamma
    \ar@{=>}[r]^{\omega_A} \ar@{=>}[d]_{\omega_A}&
    A_{g_1g_2} A_{g_3}\Gamma \ar@{=>}[d]^{\omega_A}\\
    A_{g_1}A_{g_2g_3}\Gamma \ar@{=>}[r]_{\omega_A}&
    A_{g_1g_2g_3}\Gamma.
  }
  \]
  The composition in the top square is the horizontal product of \(V_{g_3}\) and~\(\omega_A(g_1^{-1},g_2^{-1})\).  Here we reversed the order of products, that is, \(A_{g_1}A_{g_2}\) abbreviates \(A_{g_1}\otimes_{A_1} A_{g_2}\), and so on.  We will continue to do this throughout the proof.

  The coherence law~\eqref{eq:weakly_equivariant_twist2} holds if and only if the maps \(\Gamma\otimes B\to\Gamma\) form a right module structure.  This involves the following commuting diagram:
  \[
  \xymatrix{
    A_{g_1}\Gamma B_{g_2} B_{g_3} \ar@{=>}[r]^{V}
    \ar@{=>}[dd]_{\omega_B}&
    A_{g_1} A_{g_2}\Gamma B_{g_3} \ar@{=>}[r]^{\omega_A}
    \ar@{=>}[d]_V&
    A_{g_1g_2}\Gamma B_{g_3} \ar@{=>}[d]^V\\
    &
    A_{g_1} A_{g_2}A_{g_3}\Gamma \ar@{=>}[r]^{\omega_A}
    \ar@{=>}[d]^{\omega_A}&
    A_{g_1g_2}A_{g_3}\Gamma\ar@{=>}[d]^{\omega_A}\\
    A_{g_1}\Gamma B_{g_2g_3} \ar@{=>}[r]_{V}&
    A_{g_1} A_{g_2g_3}\Gamma \ar@{=>}[r]_{\omega_A}&
    A_{g_1g_2g_3}\Gamma
  }
  \]
  The argument above shows that the two small squares on the right commute, and the commutativity of the pentagon on the left is equivalent to the coherence law~\eqref{eq:weakly_equivariant_twist2}.

  To write down the \(B\)\nb-valued inner product, we identify \(\Gamma_g \cong \Gamma\otimes_B B_g\) via~\(V\) and define
  \[
  \Gamma_{g_1}^*\times\Gamma_{g_2} \to B_{g_1^{-1}g_2},\qquad
  \braket{\xi_1\otimes_B b_1}{\xi_2\otimes_B b_2}_B
  \defeq b_1^* \cdot \braket{\xi_1}{\xi_2}_B \cdot b_2.
  \]
  It is straightforward to check the identities \(\braket{\xi_1}{\xi_2}_B^* = \braket{\xi_2}{\xi_1}_B\), \(\braket{\xi_1}{\xi_2\cdot b}_B = \braket{\xi_1}{\xi_2}_B\cdot b\), and \(\braket{a\cdot\xi_1}{\xi_2}_B = \braket{\xi_1}{a^*\cdot \xi_2}_B\) for all \(\xi_1\in\Gamma_{g_1}\), \(\xi_2\in\Gamma_{g_2}\), \(a\in A_{g_3}\), \(B\in B_{g_4}\), \(g_1,g_2,g_3,g_4\in G\).  Furthermore, \(\braket{\xi}{\xi}_B\ge0\) for all~\(\xi\).  The norm on \(\Gamma\otimes_B B_g\) is defined so that \(\norm{\xi}^2 = \norm{\braket{\xi}{\xi}_B}\).  The inner product on~\(\Gamma_1\) coincides with the given \(B\)\nb-valued inner product on~\(\gamma\).  As a result, \(\Gamma=(\Gamma_g)\) with the extra structure defined above is a correspondence of Fell bundles.

  If we start with an equivalence of weak actions, then~\(\gamma\) is even an imprimitivity \(A,B\)-bimodule.  This allows us to define \(A\)\nb-valued inner products as well, satisfying the same conditions as the \(B\)\nb-valued inner products.  The condition \(\braket{\xi}{\eta}_A\zeta = \xi\braket{\eta}{\zeta}_B\) is also built in.

  Conversely, let us start with a correspondence of Fell bundles~\(\Gamma = (\Gamma_g)\) from~\(A\) to~\(B\).  We put \(\gamma\defeq \Gamma_1\); this is a correspondence from~\(A_1\) to~\(B_1\).  Since the multiplication map \(\Gamma_1\times B_g\to \Gamma_g\) has dense range, we get a unitary isomorphism \(\Gamma_g \cong \gamma \otimes_B B_g\).  The multiplication map
  \[
  A_g \times \Gamma_1 \to \Gamma_g \cong \gamma \otimes_B B_g
  \]
  induces an operator \(A_g \otimes_A \Gamma \to \Gamma \otimes_B B_g\).  This operator has dense range because~\((A_g)\) is saturated and \(A_1\times\Gamma \to \Gamma\) has dense range, and it is unitary as well.  Hence we have the data required for a transformation between two weak actions.  The coherence law~\eqref{eq:weakly_equivariant_twist1} is trivial, and the coherence law~\eqref{eq:weakly_equivariant_twist2} follows from the associativity of the bimodule structures on~\(\Gamma\).  Hence we have associated a transformation between weak actions to a correspondence between saturated Fell bundles.  If we start with a Morita equivalence of Fell bundles, then the transformation~\(\gamma\) is invertible, so that we get an equivalence of weak actions.

  It is also clear that the map from transformations to correspondences and back is the identity map on transformations.  The same holds for the map from Fell bundle correspondences to transformations and back because our assumptions for Fell bundle correspondences imply that \(\Gamma_g \cong \Gamma_1 \otimes_B B_g\) and that the inner products \(\Gamma_{g_1}^*\times \Gamma_{g_2}\to B_{g_1^{-1}}B_{g_2}\) are completely determined by the \(B_1\)\nb-inner product on~\(\Gamma_1\).  Analogous statements hold for equivalences of weak actions and Morita equivalences of Fell bundles.
\end{proof}

Recall that transformations from Busby--Smith twisted actions to trivial actions are equivalent to covariant representations (see Example~\ref{exa:covariant_representation_as_weakly_equivariant_map}).  This suggests to interpret transformations from weak actions of~\(G\) by correspondences to trivial actions as covariant representations as well.  Interpreting weak actions by correspondences as saturated Fell bundles by Theorem~\ref{the:group_act_Corrcat}, this leads naturally to covariant representations of Fell bundles.  We interpret these using Proposition~\ref{pro:Morita_Fell_transformation}.  Thus let \(A=(A_g)_{g\in G}\) be a Fell bundle over a group~\(G\) and let~\(B\) be a \(\Cst\)\nb-algebra.  We equip~\(B\) with the trivial action of~\(G\) and interpret the result as a constant Fell bundle with fibre \(B_g=B\) for all \(g\in G\) and the multiplication and involution from~\(B\).  A covariant representation of the saturated Fell bundle~\(A\) on~\(B\) is given by a correspondence of Fell bundles \(\Gamma=(\Gamma_g)_{g\in G}\).  Recall that \(\Gamma_g \cong \Gamma_1 \otimes_B B_g\). Since \(B_g=B\), this simply means \(\Gamma_g\cong\Gamma_1\) for all \(g\in G\).  Thus we are given a single full Hilbert \(B\)\nb-module~\(\Hilm \defeq \Gamma_1\) with bilinear maps \(A_g\times\Hilm\to\Hilm\).  The conditions for a correspondence in Definition~\ref{def:Morita_equivalence_Fell_bundles} amount to requiring that we get a map from the total space \(\A \defeq \bigsqcup_{g\in G} A_g\) to the \(\Cst\)\nb-algebra of adjointable operators on~\(\Hilm\) that preserves both the multiplication and involution. This is the usual definition of a representation of~\(A\):

\begin{definition}[\cite{Doran-Fell:Representations}]
  A (nondegenerate) \emph{representation} of a (saturated) Fell bundle \(A=(A_g)_{g\in G}\) on a Hilbert \(B\)\nb-module~\(\Hilm\) is a family \((\pi_g)_{g\in G}\) of linear maps \(\pi_g\colon A_g\to \Bound(\Hilm)\) satisfying
  \begin{enumerate}[label=(\roman{*})]
  \item \(\pi_{gh}(ab)=\pi_g(a)\pi_h(b)\) for all \(g,h\in G\), \(a\in A_g\) and \(b\in A_h\),
  \item \(\pi_g(a)^*=\pi_{g^{-1}}(a^*)\)  for all \(g\in G\), \(a\in A_g\), and
  \item \(\pi_1(A_1)\Hilm\) spans a dense subspace of~\(\Hilm\).
  \end{enumerate}
\end{definition}

The non-degeneracy condition~(iii) implies that the linear span of \(\pi_g(A_g)\Hilm\) is dense in~\(\Hilm\) for all \(g\in G\) because \(A_gA_{g^{-1}}=A_1\).

\subsection{Modifications}
\label{sec:equiv_weak_equivariant}

Finally, we weaken equality of equivariant maps to the notion of a modification between two transformations.  Let \((A,\alpha,\omega_A,u_A)\) and \((B,\beta,\omega_B,u_B)\) be two weak actions of~\(G\) on objects of a \(2\)\nb-category~\(\Cattwo\) and let \((f,V)\) and \((f',V')\) be transformations (weakly equivariant maps) between them.  A \emph{modification} \((f,V) \Rightarrow (f',V')\) is a bigon \(W\colon f\Rightarrow f'\) --~which weakens the equality \(f=f'\)~-- that satisfies one coherence law
\begin{equation}
  \label{eq:modification_group_action}
  \begin{gathered}
    \xymatrix{
      \beta_g f \ar@{=>}[r]^{V_g}
      \ar@{=>}[d]_{\beta_g\horizprod W}&
      f\alpha_g \ar@{=>}[d]^{W\horizprod \alpha_g}\\
      \beta_g f' \ar@{=>}[r]^{V'_g}&
      f'\alpha_g,
    }
  \end{gathered}
\end{equation}
which corresponds to the two ways of simplifying~\(\beta_g f\) to~\(f' \alpha_g\) via \(\beta_g f'\) or~\(f \alpha_g\) (see also~\cite{Leinster:Basic_Bicategories}).

An invertible modification \(W\colon (f,V)\Rightarrow (f',V')\) (that is, the bigon~\(W\) is invertible) is also called an \emph{equivalence}, and then \((f,V)\) and~\((f',V')\) are called \emph{equivalent}.  In the \(2\)\nb-categories we are interested in, all bigons are invertible, so that any modification is an equivalence.

For a transformation \((f,V)\) between two morphisms and an invertible bigon \(W\colon f\Rightarrow f'\) for some arrow~\(f'\), we define
\[
V'_g \defeq (W\horizprod\alpha_g) V_g (\beta_g\horizprod W^{-1})\colon
\beta_g f' \xRightarrow{\beta_g\horizprod W^{-1}}
\beta_g f \xRightarrow{V_g}
f \alpha_g \xRightarrow{W\horizprod \alpha_g}
f'\alpha_g.
\]
It is routine to check that \((f',V')\) is again a transformation and that~\(W\) is an equivalence \((f,V)\Rightarrow (f',V')\).

Now we consider the category~\(\Csttwocat\).  Let \((f,V)\) and \((f',V')\) be transformations from \((A,\alpha,\omega_A,u_A)\) to \((B,\beta,\omega_B,u_B)\).  Since any bigon in~\(\Csttwocat\) is invertible, any modification is an equivalence.  The coherence law~\eqref{eq:modification_group_action} amounts to the condition \(V'_g \beta_g(W) = WV_g\) for all \(g\in G\).  Thus a modification \((f,V)\Rightarrow (f',V')\) is a unitary multiplier~\(W\) of~\(B\) that satisfies
\begin{equation}
  \label{eq:modification_Cstar}
  f'(a) = W\cdot f(a)\cdot W^*,\qquad
  V'_g = W\cdot V_g\cdot \beta_g(W^*)
  \qquad\text{for all \(a\in A\), \(g\in G\).}
\end{equation}

\begin{example}
  \label{exa:modification_intertwiner}
  Let~\(B\) carry a trivial action as in Example~\ref{exa:covariant_representation_as_weakly_equivariant_map}, so that transformations to~\(B\) are covariant representations.  Then the conditions above simplify to \(f'(a) = Wf(a)W^*\) and \(V'_g = WV_gW^*\), that is, \(W\) is a \emph{unitary intertwiner} between two covariant representations.
\end{example}

Recall that in the correspondence \(2\)\nb-category~\(\Corrcat\), transformations between weak actions are equivalent to correspondences between saturated Fell bundles.  Obviously, there is a bijection between equivalences of transformations and isomorphisms between the associated correspondences of saturated Fell bundles.

\subsection{Adding topology}
\label{sec:topological_group}

Now let~\(G\) be a locally compact topological group.  Then we should impose continuity conditions for weak actions, transformations, and modifications.

If we use the target category~\(\Csttwocat\), we merely have to topologise the groups \(\Aut(A)\) and \(\U\Mult(A)\) of \Star{}automorphisms and unitary multipliers of a \(\Cst\)\nb-algebra~\(A\).  We equip \(\Aut(A)\) with the strong topology, that is, the topology of pointwise convergence.  We equip \(\Mult(A)\) and the subgroup \(\U\Mult(A)\) with the strict topology, that is, the topology generated by the maps \(\lambda_a\colon \Mult(A)\to A\), \(m\mapsto a\cdot m\), and \(\rho_a\colon \Mult(A)\to A\), \(m\mapsto m\cdot a\).  Thus a net \((u_i)_{i\in I}\) in \(\U\Mult(A)\) converges if and only if the nets \((u_i\cdot a)\) and \((a\cdot u_i)\) are norm-convergent for each \(a\in A\).

\begin{definition}
  \label{def:continuous_action_group}
  A weak group action \((A,\alpha,\omega)\) is called \emph{continuous} if the maps \(\alpha\colon G\to\Aut(A)\) and \(\omega\colon G\times G\to\U\Mult(A)\) are continuous with respect to the strong topology on \(\Aut(A)\) and the strict topology on \(\U\Mult(A)\) described above.  A transformation \((f,V)\) between two continuous group actions is called \emph{continuous} if \(V\colon G\to\U\Mult(A)\) is continuous with respect to the strict topology on \(\U\Mult(A)\).  All modifications between group actions are continuous by definition.
\end{definition}

Continuity for weak actions by correspondences is more interesting.  The continuity of a family of correspondences \((\alpha_g)_{g\in G}\) from~\(A\) to~\(B\) is not a property but an additional datum, namely, a \(\Cont_0(G)\)-linear correspondence~\(\alpha\) from \(\Cont_0(G,A)\) to \(\Cont_0(G,B)\).  In the discrete case, we take here the direct sum \(\bigoplus_{g\in G} \alpha_g\) with the obvious \(\Cont_0(G,A)\)-\(\Cont_0(G,B)\)-bimodule structure.  A \(\Cont_0(G)\)-linear correspondence from \(\Cont_0(G,A)\) to \(\Cont_0(G,B)\) is equivalent to a family \((\alpha_g)_{g\in G}\) of correspondences from~\(A\) to~\(B\) together with a space~\(\alpha\) of continuous sections, which forms a correspondence from \(\Cont_0(G,A)\) to \(\Cont_0(G,B)\).  We may also view~\(\alpha\) as an upper semi-continuous Banach bundle.

Thus a continuous action of~\(G\) on~\(A\) by correspondences involves a \(\Cont_0(G)\)-linear correspondence~\(\alpha\) from \(\Cont_0(G,A)\) to itself.  There is no continuity condition for~\(u\), it is simply an isomorphism \(\alpha_1\cong A\), where~\(\alpha_1\) denotes the fibre of~\(\alpha\) at~\(1\) (here we tacitly interpret~\(\alpha\) as a family of correspondences together with a space of continuous sections).  The multiplication maps \(\omega_{g_1,g_2}\colon \alpha_{g_2}\otimes_A \alpha_{g_1} \to \alpha_{g_1g_2}\) should be continuous in the sense that the product of two continuous sections is again continuous.  Equivalently, the maps~\(\omega_{g_1,g_2}\) piece together to a unitary intertwiner
\[
\omega\colon \pi_2^*\alpha\otimes_{\Cont_0(G,A)} \pi_1^*\alpha
\Rightarrow \mu^*\alpha,
\]
where \(\mu\colon G\times G\to G\) is the multiplication map and \(\pi_1,\pi_2\colon G\times G\rightrightarrows G\) are the coordinate projections.  Thus \(\pi_2^*\alpha\otimes_{\Cont_0(G,A)} \pi_1^*\alpha\) and~\(\mu^*\alpha\) are \(\Cont_0(G\times G)\)-linear correspondences with fibres \(\alpha_{g_2}\otimes_A\alpha_{g_1}\) and \(\alpha_{g_1g_2}\) at \((g_1,g_2)\in G\times G\), respectively.  Thus we arrive at the following definition of a continuous weak action by correspondences:

\begin{definition}
  \label{def:continuous_action_correspondence}
  A continuous action of~\(G\) on~\(A\) by correspondences consists of a \(\Cont_0(G)\)\nb-linear correspondence~\(\alpha\) from~\(\Cont_0(G,A)\) to itself and unitary intertwiners
  \[
  \omega\colon \pi_2^*\alpha\otimes_{\Cont_0(G,A)} \pi_1^*\alpha
  \Rightarrow \mu^*\alpha.
  \]
  and \(u\colon [\Id_A] \Rightarrow \alpha_1\) that satisfy analogues of \eqref{eq:group_coherence_unit} and~\eqref{eq:group_coherence_associativity}.  More precisely, we view~\(\alpha\) as an upper semi-continuous field~\((\alpha_g)\) of Hilbert \(A,A\)\nb-bimodules, \(u\) as a unitary intertwiner from~\(\alpha_1\) to~\([\Id_A]\), and~\(\omega\) as a continuous family of unitary intertwiner~\(\omega(g,h)\) from \(\alpha_h\otimes_A \alpha_g\) to~\(\alpha_{hg}\), and require the latter to satisfy these coherence conditions.
\end{definition}

Continuity for a transformation \((f,V)\) between continuous actions \((A,\alpha,\omega_A,u_A)\) and \((B,\beta,\omega_B,u_B)\) by correspondences is defined, similarly, by requiring the unitary intertwiners~\(V_g\) to piece together to a unitary intertwiner \(V\colon f\otimes_B \beta\to \alpha\otimes_A f\).  Once again, continuity is no restriction for modifications.

The results above all extend to the continuous case with little change.

Continuous actions by correspondences still correspond to saturated Fell bundle. Usually, Fell bundles are required to be continuous Banach bundles, but our definition only requires upper semi-continuity.  However, for Fell bundles over groups there is no difference between continuity and upper semi-continuity.  We would like to thank Ruy Exel for explaining the following result to us.

\begin{lemma}
  \label{lem:upperSemiContFellBundle=Continuous}
  Let~\(G\) be a locally compact group and let \(A=(A_g)_{g\in G}\) be an upper semi-continuous Banach bundle.  Suppose there are a multiplication \(\cdot\colon A\times A\to A\) and an involution \({}^*\colon A\to A\) which are continuous for the topology on~\(A\) and satisfy all the algebraic conditions of a Fell bundle.  Then the norm \(a\mapsto \|a\|\) is continuous and hence~\(A\) is a \emph{continuous} Fell bundle in the usual sense.
\end{lemma}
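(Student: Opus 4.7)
The plan is to exploit the $\Cst$-identity to reduce continuity of the norm on all of $A$ to norm-continuity along the unit fiber~$A_1$, where the situation is under tight control.

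First I would record the standard fact that for any upper semi-continuous Banach bundle, convergence inside a single fiber coincides with norm convergence: if $a_i\to a$ in~$A$ with all $a_i,a$ in the same fiber~$A_x$, then $a_i-a\to 0$ in the zero section, and upper semi-continuity of the norm at~$0$ forces $\limsup\norm{a_i-a}\le 0$, hence $\norm{a_i-a}\to 0$. In particular, on the distinguished fiber~$A_1$ the bundle topology and the (already continuous) $\Cst$-algebra norm topology agree.

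The core argument is then very short. Upper semi-continuity of $a\mapsto\norm{a}$ is part of the data, so only lower semi-continuity needs proof. Let $(a_i)$ be a net in~$A$ converging to $a\in A_g$; write $a_i\in A_{g_i}$, so $g_i\to g$. Continuity of the involution gives $a_i^*\to a^*$, and then continuity of multiplication gives $a_i^*a_i\to a^*a$ in the total space. Since $a_i^*a_i\in A_{g_i^{-1}g_i}=A_1$ and $a^*a\in A_1$, the preliminary observation yields norm convergence inside~$A_1$:
\[
\norm{a_i^*a_i-a^*a}_{A_1}\longrightarrow 0.
\]
In particular $\norm{a_i^*a_i}_{A_1}\to\norm{a^*a}_{A_1}$, and the $\Cst$-identity $\norm{b}^2=\norm{b^*b}_{A_1}$ yields $\norm{a_i}^2\to\norm{a}^2$, hence $\norm{a_i}\to\norm{a}$. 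This gives continuity of the norm, and an upper semi-continuous Banach bundle with continuous norm is automatically continuous in the usual sense.

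I expect no serious obstacle: the only non-trivial input beyond the hypotheses is the fact that fiberwise convergence in an upper semi-continuous Banach bundle is norm convergence, and this is immediate from upper semi-continuity applied to the net of differences. Everything else is a direct combination of the continuity of the algebraic operations with the $\Cst$-identity.
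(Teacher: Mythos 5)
Your proof is correct and follows essentially the same route as the paper: factor the norm through $a\mapsto a^*a\in A_1$ using continuity of the involution and multiplication, and conclude via the $\Cst$-identity. The only difference is that you explicitly justify the step the paper calls ``obvious'' --- that the norm on the unit fibre~$A_1$ is continuous for the subspace topology from the total space --- by noting that fibrewise convergence in an upper semi-continuous Banach bundle is norm convergence; this is a worthwhile detail to spell out.
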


\begin{proof}
  It is enough to observe that the norm \(a\mapsto \|a\|\) is the composition of the continuous maps
  \[
  [a\in A]
  \mapsto [(a^*,a)\in A\times A]
  \mapsto [a^*a\in A_1]
  \mapsto \|a^*a\|^{\frac{1}{2}}=\|a\|.
  \]
  The last map is the norm on the \(\Cst\)\nb-algebra~\(A_1\), which is obviously continuous.
\end{proof}

\begin{theorem}
  \label{the:group_act_Corrcat_continuous}
  A continuous group action by correspondences of a locally compact group~\(G\) on a \(\Cst\)\nb-algebra~\(A\) is equivalent to a saturated Fell bundle \((\alpha_g)_{g\in G}\) over~\(G\) with a \(\Cst\)\nb-algebra isomorphism \(\alpha_1\cong A\).
\end{theorem}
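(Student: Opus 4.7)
The plan is to reduce to the discrete case (Theorem~\ref{the:group_act_Corrcat}) by constructing the Fell bundle fibrewise and then matching the continuity data on the two sides.

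Starting from a continuous weak action $(A,\alpha,\omega,u)$ by correspondences, I view $\alpha$ as an upper semi-continuous field of Hilbert $A,A$-bimodules $(\alpha_g)_{g\in G}$, as specified in Definition~\ref{def:continuous_action_correspondence}. I then apply the pointwise construction of Theorem~\ref{the:group_act_Corrcat}: set $A_g\defeq \alpha_{g^{-1}}$, use the fibres of $\omega$ to define the multiplication $\mu(g_1,g_2)\colon A_{g_1}\times A_{g_2}\to A_{g_1g_2}$, and use the adjoint-map construction $\hat v_g$ induced by $v_g\defeq u^{-1}\circ\omega(g^{-1},g)$ to define the involution. All algebraic axioms of a saturated Fell bundle then hold by Theorem~\ref{the:group_act_Corrcat} applied fibrewise. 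The total space inherits the bundle topology from the $\Cont_0(G)$-linear correspondence $\alpha$ pulled back under inversion $g\mapsto g^{-1}$ on $G$.

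Next I verify that this topology makes the multiplication and involution continuous. Continuity of the multiplication is exactly the statement that $\omega$ is a unitary intertwiner of $\Cont_0(G\times G)$-linear correspondences $\pi_2^*\alpha\otimes_{\Cont_0(G,A)}\pi_1^*\alpha\Rightarrow \mu^*\alpha$, built into the definition of a continuous weak action. For the involution, the family $v_g=u^{-1}\circ\omega(g^{-1},g)$ depends continuously on $g$, since $\omega$ is a continuous family of unitary intertwiners and $u$ is fixed. The construction $v\mapsto\hat v$ is natural and characterised by $\braket{\hat v_g(\xi)}{\eta}_B=v_g(\xi\otimes\eta)$, which therefore produces a continuous involution on the total space. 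An application of Lemma~\ref{lem:upperSemiContFellBundle=Continuous} promotes upper semi-continuity to continuity, so $(A_g)_{g\in G}$ is a saturated continuous Fell bundle in the usual sense, equipped with the isomorphism $A\cong\alpha_1=A_1$ coming from $u$.

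For the converse, I start from a saturated continuous Fell bundle $(A_g)_{g\in G}$ together with $\varphi\colon A\congto A_1$. Each $A_g$ is an $A_1,A_1$-imprimitivity bimodule, hence, via $\varphi$, an $A,A$-imprimitivity bimodule $\alpha_{g^{-1}}$. The continuous section space of the Fell bundle, reindexed by $g\mapsto g^{-1}$, forms a $\Cont_0(G)$-linear correspondence $\alpha$ from $\Cont_0(G,A)$ to itself. The multiplication in the Fell bundle is continuous, so it assembles into the required unitary intertwiner $\omega$ of $\Cont_0(G\times G)$-correspondences, and $u$ is given by $\varphi$. The coherence laws \eqref{eq:group_coherence_unit} and~\eqref{eq:group_coherence_associativity} again follow from Theorem~\ref{the:group_act_Corrcat}.

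The main obstacle I anticipate is bookkeeping around the compatibility of the two notions of topology: showing that the $\Cont_0(G)$-module structure on $\alpha$ and the Banach bundle topology on $(A_g)$ are genuinely interchangeable, and in particular that the indirectly defined involution $\hat v_g$ inherits continuity from $v_g$. The rest is a continuous translation of the calculations of Theorem~\ref{the:group_act_Corrcat}, together with the crucial observation from Lemma~\ref{lem:upperSemiContFellBundle=Continuous} that over a group one need not distinguish upper semi-continuous from continuous Fell bundles.
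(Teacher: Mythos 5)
Your proposal is correct and follows essentially the same route as the paper: identify Hilbert modules over \(\Cont_0(G,A)\) with upper semi-continuous bundles of Hilbert \(A\)\nb-modules, transport the fibrewise algebraic construction of Theorem~\ref{the:group_act_Corrcat}, check continuity of the multiplication and involution in terms of continuous sections, and invoke Lemma~\ref{lem:upperSemiContFellBundle=Continuous} to upgrade from upper semi-continuity to continuity. The only difference is cosmetic: you spell out the continuity of the involution via the characterisation of \(\hat v_g\) and write out the converse direction, where the paper compresses both into a citation of the corresponding arguments in Fell--Doran and the discrete case.
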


\begin{proof}
  It is well-known that Hilbert modules over \(\Cont_0(G,A)\) correspond to upper semi-continuous bundles of Hilbert \(A\)\nb-modules, that is, upper semi-continuous Banach space bundles with a continuous right \(A\)\nb-module structure and a continuous \(A\)\nb-valued inner product.  Hence our continuous weak action yields an upper semi-continuous Banach bundle \(\A=(\alpha_{g^{-1}})_{g\in G}\).  We construct a multiplication and an involution on~\(\A\) as in the proof of Theorem~\ref{the:group_act_Corrcat}.  These are continuous because the continuity of these algebraic operations can be expressed in terms of continuous sections.  For instance, the multiplication is continuous if and only if the product \((g,h)\mapsto \xi(g)\eta(h)\) of two continuous sections \(\xi,\eta\) of~\(\A\) is a continuous section of the pull-back of~\(\A\) along the multiplication map \(G\times G\to G\).  A similar statement holds for the involution (see \cite{Doran-Fell:Representations}*{VIII.2.4 and VIII.3.2} for further details).  Although the results in~\cite{Doran-Fell:Representations} are stated only for \emph{continuous} bundles, upper semi-continuity suffices for the proofs. By Lemma~\ref{lem:upperSemiContFellBundle=Continuous}, \(\A\) is a continuous Fell bundle.
\end{proof}

Lemmas \ref{lem:decompose_strong_strict_equivalence} and~\ref{lem:weaken_unit} also have analogues for continuous actions.  Of course, in the construction before Lemma~\ref{lem:decompose_strong_strict_equivalence} we require~\((V_g)\) to be continuous.  Finally, continuous transformations to trivial actions and modifications between them correspond to continuous covariant representations and their intertwiners as in Examples \ref{exa:covariant_representation_as_weakly_equivariant_map} and~\ref{exa:modification_intertwiner}.  For weak actions by correspondences, continuous transformations to trivial actions correspond to continuous representations of Fell bundles.  Recall that a representation \(\pi=(\pi_g)_{g\in G}\) of a Fell bundle \(\A=(A_g)_{g\in G}\) on a Hilbert module~\(\Hilm\) is \emph{continuous} if and only if the map \(g\mapsto \pi_g(a_g)\xi\) is continuous from~\(G\) to~\(\Hilm\) for any continuous section~\((a_g)\) of~\(\A\).

Proposition~\ref{pro:Morita_Fell_transformation} also has an analogue in the topological setting: continuous transformations between two (arbitrary) weak actions of~\(G\) by correspondences (resp. equivalences) correspond to \emph{continuous} correspondences (resp. Morita equivalences) between the associated Fell bundles. Continuity of correspondences and Morita equivalences between Fell bundles is defined by requiring, in addition, that the Banach bundle \(\Gamma=\{\Gamma_g\}_{g\in G}\) be upper semi-continuous and the left and right actions and inner products on~\(\Gamma\) be continuous (see~\cite{Muhly-Williams:Equivalence.FellBundles}).

\section{Actions of higher categories}
\label{sec:generalise_higher}

Now we generalise the definitions of weak action, transformation, and modification to weak \(2\)\nb-groupoids instead of groups.  In particular, for strict \(2\)\nb-groups associated to normal subgroups this combines Busby--Smith and Green twists of group actions.

The basic structure is parallel to the special case of group actions.  But we must allow for more than one object and impose further conditions that take care of the bigons in the weak \(2\)\nb-groupoid that acts.

\subsection{Morphisms}
\label{sec:morphisms_two}

A group action on an object of a category may also be viewed as a functor from the group to the category.  In Section~\ref{sec:weak_group_action}, we weakened the notion of group action.  The resulting notion of weak group action is a special case of weak functors, which we get by weakening the notion of a functor.  These weak functors are just called \emph{morphisms} of \(2\)\nb-categories.

A \emph{strict functor} between two strict \(2\)\nb-categories is a triple of maps on objects, arrows and bigons that preserve units and the three products.  Strict functors between strict \(2\)\nb-groupoids correspond bijectively to morphisms of crossed modules; of course, the latter are pairs of morphisms \(G_1\to G_2\), \(H_1\to H_2\) such that the following two diagrams commute:
\[
\xymatrix{
  H_1\ar[d]\ar[r]^{\tcm_1}&G_1\ar[d]\\
  H_2\ar[r]^{\tcm_2}&G_2
}\qquad
\xymatrix{
  G_1\times H_1\ar[d]\ar[r]^-{\acm_1}&H_1\ar[d]\\
  G_2\times H_2\ar[r]^-{\acm_2}&H_2.
}
\]
The equations regarding arrows that may be weakened are the two characteristic properties of a functor regarding composition of arrows and unit arrows.  Correspondingly, a morphism of weak categories has bigons that replace these two equations as additional data.  These are subject to certain coherence laws.  In detail:

\begin{definition}
  \label{def:morphism}
  A \emph{morphism} between two weak \(2\)\nb-categories \(\cC\) and~\(\cC'\) consists of the following data:
  \begin{itemize}
  \item a map \(F\colon \cC_0 \to \cC'_0\) between the object sets;
  \item a functor \(F(x, y)\colon \cC(x, y) \to \cC'\bigl(F(x), F(y)\bigr)\) for each pair of objects \(x,y\);
  \item a natural bigon \(\omega(f_1, f_2)\colon F(f_1) \cdot F(f_2) \Rightarrow F(f_1 \cdot f_2)\) for each pair \((f_1, f_2)\) of composable arrows; naturality means that the following diagrams commute for all pairs of bigons \(n_1\colon f_1\Rightarrow g_1\), \(n_2\colon f_2\Rightarrow g_2\) between composable arrows:
    \begin{equation}
      \label{eq:naturality-composition}
      \begin{gathered}
        \xymatrix@C+2em{
          F(f_1)  \cdot F(f_2) \ar@{=>}[r]^-{\omega(f_1, f_2)}
          \ar@{=>}[d]_{F(n_1)\cdot F(n_2)} & F(f_1 \cdot f_2)
          \ar@{=>}[d]^{F(n_1\cdot n_2)} \\
          F(g_1)  \cdot F(g_2) \ar@{=>}[r]_-{\omega(g_1, g_2)} &   F(g_1 \cdot g_2);
        }
      \end{gathered}
    \end{equation}
    in more categorical language, these are natural transformations \(\omega_{x, y, z}\)
    \[
    \xymatrix{
      \cC(x, y)  \times \cC(y, z) \ar[r]^-{\cdot} \ar[d]_{F(x,y) \times F(y, z)}
      \ar@{=>}[dr]^{\omega_{x, y, z}}&
      \cC(x, z) \ar[d]^{F(x, z)}\\
      \cC'(F(x), F(y)) \times \cC'(F(y), F(z)) \ar[r]_-{\cdot'}&
      \cC'(F(x), F(z))}
    \]
    for all triples of objects \(x\), \(y\), \(z\) of~\(\cC\);

  \item bigons \(u_x\colon 1_{F(x)} \Rightarrow F(1_x)\) for all objects~\(x\).
  \end{itemize}
  The above data is subject to the following coherence conditions:
  \begin{gather}
    \label{eq:wcat_coherence_associativity}
    \begin{gathered}
      \xymatrix@C+7em@R-.5em{
        (F(g_1)\cdot F(g_2))\cdot F(g_3)
        \ar@{=>}[r]^{a'(F(g_1),F(g_2), F(g_3))}
        \ar@{=>}[d]_{\omega(g_1,g_2)\cdot \Id_{F(g_3)}}&
        F(g_1)\cdot (F(g_2)\cdot F(g_3))
        \ar@{=>}[d]^{\Id_{F(g_1)}\cdot\omega(g_2,g_3)}\\
        F(g_1\cdot g_2)F(g_3) \ar@{=>}[d]_{\omega(g_1\cdot g_2,g_3)}
        &
        F(g_1)F(g_2\cdot g_3)
        \ar@{=>}[d]^{\omega(g_1,g_2\cdot g_3)}
        \\
        F((g_1\cdot g_2) \cdot g_3)
        \ar@{=>}[r]_{F(a(g_1,g_2, g_3))} &
        F(g_1\cdot (g_2\cdot g_3))
      }
    \end{gathered}\\
    \label{eq:wcat_coherence_unit}
    \begin{gathered}
      \xymatrix@C+1em{
        F(1_x) \cdot F(g) \ar@{=>}[r]^-{\omega(1_x,g)}
        \ar@{<=}[d]_{u\cdot \Id_{F(g)}}&
        F(1_x\cdot g )\ar@{=>}[d]^{F(l)}\\
        1_{F(x)}\cdot F(g) \ar@{=>}[r]_{l'}&
        F(g)
      }\qquad
      \xymatrix@C+1em{
        F(g)\cdot F(1_y) \ar@{=>}[r]^-{\omega(g,1_y)}
        \ar@{<=}[d]_{\Id_{F(g)}\cdot u}&
        F(g\cdot 1_y) \ar@{=>}[d]^{F(r)}\\
        F(g)\cdot 1_{F(y)} \ar@{=>}[r]_{r'}&
        F(g).
      }
    \end{gathered}
  \end{gather}

  If, in addition, the bigons \(u_x\) and \(\omega(f,g)\) above are all invertible, we speak of a \emph{homomorphism} of weak \(2\)\nb-categories.  If they are all identities, we get a \emph{strict functor} (this notion is only interesting if both \(2\)\nb-categories involved are strict).
\end{definition}

In the \(2\)\nb-categories of \(\Cst\)\nb-algebras introduced above, all bigons are invertible, so that there is no difference between morphisms and homomorphisms into \(\Csttwocat\) and \(\Corrcat\).  Strict functors from strict \(2\)\nb-groupoids into \(\Csttwocat\) are equivalent to the actions of crossed modules studied in~\cite{Buss-Meyer-Zhu:Non-Hausdorff_symmetries}.

\subsubsection{Morphisms to \texorpdfstring{$\Csttwocat$}{C*(2)}}
\label{sec:morphisms_Csttwounit}

Now we make this notion of a morphism concrete for the target category \(\cC=\Csttwocat\) and a weak category~\(G\).  There is no difference between morphisms and homomorphisms because all arrows in~\(\Csttwocat\) are invertible.  A morphism consists of the following data:
\begin{itemize}
\item for each object \(x\in G_0\), a \(\Cst\)\nb-algebra~\(A_x\);
\item for each arrow \(g\in G_1\), a \Star{}isomorphism \(\alpha_g\colon A_{\source(g)} \to A_{\target(g)}\) (use Proposition~\ref{pro:equivalence_Cstar});
\item for each object \(x\in G_0\), a unitary multiplier~\(u_x\) with \(u_x \cdot \alpha_{1_x}(a) \cdot u_x^* =a\) for all \(a \in A_x\);
\item for each pair of composable arrows \((g_1, g_2) \in G_1 \times_{\source, G_0, \target} G_1\), a unitary multiplier~\(\omega(g_1, g_2)\) of~\(A_{\target(g_1)}\) with
  \[
  \omega(g_1,g_2) \cdot \alpha_{g_1}\bigl(\alpha_{g_2}(a)\bigr) \cdot \omega(g_1,g_2)^*
  = \alpha_{g_1g_2}(a)
  \qquad \text{for all \(a\in A_{\source(g_2)}\);}
  \]
\item for each bigon \(n\in G_2\), \(n\colon f\Rightarrow g\), a unitary multiplier~\(\twc_n\) of \(A_{\target(f)}= A_{\target(g)}\) with \(\alpha_g(a) = \twc_n \cdot \alpha_f(a) \cdot\twc_n^*\) for all \(a\in A_{\source(g)}\).
\end{itemize}
These satisfy the following naturality conditions:
\begin{itemize}
\item \(\twc_{m\vertprod n}=\twc_m\cdot\twc_n\) for all composable bigons \(m\) and~\(n\); in particular, this implies \(\twc_{\Id_f}=1\) for all \(f\in G_1\) and \(\twc_{n^{-1}}=\twc_n^*\) for all \(n\in G_2\) which have an inverse;
\item given a horizontal composition in~\(G\),
  \[
  \xymatrix@1@C+2em{
    z &
    \ar@/_1pc/[l]_{f_1}_{}="0"
    \ar@/^1pc/[l]^{g_1}^{}="1"
    \ar@{=>}"0";"1"^{a}
    y &
    \ar@/_1pc/[l]_{f_2}_{}="2"
    \ar@/^1pc/[l]^{g_2}^{}="3"
    \ar@{=>}"2";"3"^{b}
    x}
  \quad\mapsto\quad
  \xymatrix@1@C+2em{
    z & \ar@/_1pc/[l]_{f_1f_2}_{}="4"
    \ar@/^1pc/[l]^{g_1g_2}^{}="5"
    \ar@{=>}"4";"5"^{a\horizprod b}
    x,
  }
  \]
  the following two diagrams in~\(\Csttwocat\) have the same composition:
  \[
  \xymatrix@1@C+2em{
    A_z &
    \ar@/_2pc/[l]_{\alpha_{f_1}}^{}="0"
    \ar[l]^{\alpha_{g_1}}_{}="1"
    \ar@{=>}"0";"1"^{\twc_a}
    A_y &
    \ar@/_2pc/[l]_{\alpha_{f_2}}^{}="2"
    \ar[l]^{\alpha_{g_2}}_{}="3"
    \ar@/^3pc/[ll]^{\alpha_{g_1g_2}}^{}="5"
    \ar@{=>}"2";"3"^{\twc_b}
    \ar@{=>}[l];"5"|{\omega(g_1,g_2)}
    A_x}\qquad
  \xymatrix@1@C+3em{
    A_z & \ar@/_2pc/[l]_{\alpha_{f_1}\alpha_{f_2}}^{}="4"
    \ar[l]|{\alpha_{f_1f_2}}_{}^{}="5"
    \ar@/^2pc/[l]^{\alpha_{g_1 g_2}}_{}="6"
    \ar@{=>}"4";"5"|{\omega(f_1,f_2)}
    \ar@{=>}"5";"6"|{\twc_{a\horizprod b}}
    A_x;
  }
  \]
  that is,
  \begin{equation}
    \label{eq:weak_action_naturality_horizontal}
    \omega(g_1, g_2) \cdot (\twc_a \horizprod \twc_b)
    = \omega(g_1, g_2) \cdot \twc_a\cdot \alpha_{f_1}(\twc_b)
    = \twc_{a \horizprod b} \cdot \omega(f_1, f_2).
  \end{equation}
\end{itemize}
Finally, the coherence conditions \eqref{eq:wcat_coherence_associativity} and~\eqref{eq:wcat_coherence_unit} amount to the following conditions:
\begin{itemize}
\item \(\omega(g_1\cdot g_2,g_3) \cdot \omega(g_1,g_2) = \omega(g_1,g_2\cdot g_3)\cdot \alpha_{g_1}\bigl(\omega(g_2,g_3)\bigr)\cdot \alpha\bigl(a(g_1, g_2, g_3)\bigr)\) for three composable arrows \(g_1\), \(g_2\) and~\(g_3\) in~\(G\), where~\(a\) denotes the associator of~\(G\);
\item \(\twc_{l(g)}\cdot \omega(1_x, g) \cdot u_x = 1\) and \(\twc_{r(g)} \cdot \omega(g, 1_y) \cdot \alpha_g(u_y) = 1\), where~\(g\) is an arrow \(x\to y\) in~\(G\) and \(l\) and~\(r\) denote the unitors in~\(G\).
\end{itemize}
These conditions and data only make explicit the definition of a morphism from a \(2\)\nb-category to~\(\Csttwocat\). This defines weak actions of a weak \(2\)\nb-category~\(G\) in the \(2\)\nb-category \(\Csttwocat\).  It is a routine exercise to do the same for morphisms to \(\Corrcat\).  In fact, the latter category is not much more special than a general weak \(2\)\nb-category, so that almost no simplification of the general definition is possible.

We now specialise to morphisms from strict \(2\)\nb-groupoids to~\(\Csttwocat\) and reformulate the above definition in terms of crossed modules.  This yields a notion of weak \(2\)\nb-groupoid action that combines Green twists and Busby--Smith twists and generalises both to groupoids.

Let~\(\cC\) be a strict \(2\)\nb-groupoid and let \((G, H, \acm, \tcm)\) be the corresponding crossed module of groupoids (see also~\cite{Buss-Meyer-Zhu:Non-Hausdorff_symmetries} for the definition of crossed modules of groupoids).  Such a crossed module acts on \(\Csttwocat\) by \Star{}isomorphisms \(\actA_g\colon A_{\source(g)}\to A_{\target(g)}\) for \(g\in G\) and unitary multipliers \(u_h\in \U\Mult(A_x)\) for \(h\in H_x\), such that the~\((\actA_g)\) form an action of the groupoid~\(G\) and the~\((u_h)\) a homomorphism of group bundles \(H_\bullet\to \U\Mult(A_\bullet)\).  It is easy to check that this is equivalent to a strict homomorphism from~\(\cC\) to~\(\Csttwocat\).  We concentrate on the more difficult case of weak actions of~\(\cC\).

Let~\(X\) be the common object space of \(G\) and~\(H\).  Recall that \(\cC_0=X\), \(\cC_1= G\), and \(\cC_2 = G\times_X H\).  Thus a weak action of~\(\cC\) involves \(\Cst\)\nb-algebras~\(A_x\) for all \(x\in X\), \Star{}isomorphisms \(\alpha_g\colon A_{\source(g)}\to A_{\target(g)}\) for \(g\in G\), and unitary multipliers \(u_x\in \U\Mult(A_x)\) for \(x\in X\), \(\omega(g_1,g_2)\in \U\Mult(A_{\target(g_1)})\) for composable arrows \(g_1,g_2\in G\), and \(\twc_{(h,g)}\in \U\Mult(A_{\target(g)})\) for all \((h,g)\in H\times_X G\).

The data \((A_x,\alpha_g,\omega(g_1,g_2),u_x)\) without the~\(\twc_{(h,g)}\) for \((h,g)\in H\times_X G\) defines a weak action of the groupoid \(\cC_1\rightrightarrows \cC_0\) on \(\Csttwocat\).  We have already identified such weak actions with Busby--Smith twisted actions (the extension from groups to groupoids is straightforward).  In particular, we have seen that the unitaries~\(u_x\) are redundant because \(u_x=\omega(1_x,1_x)\).  Observe that the coherence conditions in the definition of a weak action of~\(\cC\) only involve the actions of \(\cC_1\rightrightarrows \cC_0\).

It remains to analyse the naturality conditions for the unitary multipliers~\(\twc_{(h,g)}\) for \((h,g)\in H\times_X G\).  First we show that these unitaries depend on~\(g\) in a specific way.  We always embed \(H\to\cC_2\) by mapping \(h\in H\) to the bigon \((h,1)\colon 1\to \tcm(h)\), and we let \(v_h \defeq v_{(h,1)}\).  Since the bigon \((h,g)\colon g\to \tcm(h)g\) represents \(h\horizprod1_g\), the naturality conditions above imply first \(\twc_{(1,g)}=1\) for all \(g\in G\) and then
\begin{equation}\label{eq:reduce-g}
\twc_{(h,g)} = \omega(\tcm(h),g)\cdot \twc_h\cdot \omega(1,g)^*.
\end{equation}
Thus it suffices to specify~\(\twc_h\) for \(h\in H\).

If \(\omega(g_1,g_2)=1\) for all \(g_1,g_2\in G\), we get a strict action, which is equivalent to an action of a crossed module.  Such an action is characterised by three conditions.  First, \(\alpha_{\tcm(h)}(a) = \twc_h a \twc_h^*\) for all \(x\in X\), \(h\in H_x\), \(a\in A_x\); secondly, \(\twc_{h_1h_2} = \twc_{h_1}\twc_{h_2}\) for all \(x\in X\), \(h_1,h_2\in H_x\); and thirdly, \(\twc_{\acm_g(h)} = \alpha_g(\twc_h)\) for all \(x\in X\), \(g\in G_x^y\), \(h\in H_x\).

The first condition for a crossed module action expresses that~\(\twc_h\) is a bigon from~\(\Id\) to~\(\alpha_{\tcm(h)}\).  For a general weak action, this becomes
\begin{equation}
  \label{eq:weak_action_crossed_module_1}
  \alpha_{\tcm(h)}(a) = \twc_h \alpha_1(a) \twc_h^*\qquad
  \text{for all \(x\in X\), \(h\in H_x\), \(a\in A_x\).}
\end{equation}
Here \(\alpha_1(a) = \omega(1,1)^* a \omega(1,1)\) for all \(a\in A\).  Equation~\eqref{eq:weak_action_crossed_module_1} ensures that~\(\twc_{(h,g)}\) is a bigon from~\(\alpha_g\) to \(\alpha_{\tcm(h)g}\) for all \((h,g)\in H\times_X G\).

The map \((h,g)\mapsto \twc_{(h,g)}\) is compatible with vertical composition if and only if the following diagram in \(\Csttwocat\) commutes for all \((h_1,h_2) \in H\times_X H\):
\begin{equation}
  \label{eq:weak_action_crossed_module_2}
  \begin{gathered}
    \xymatrix@C+3em{
      \alpha_1 \ar@{=>}[r]^-{\twc_{h_2}}
      \ar@{=>}[d]_-{\twc_{h_1h_2}}&
      \alpha_{\tcm(h_2)} \ar@{=>}[r]^-{\omega(1,\tcm(h_2))^*}&
      \alpha_1\alpha_{\tcm(h_2)} \ar@{=>}[d]^-{\twc_{h_1}}\\
      \alpha_{\tcm(h_1h_2)} \ar@{=}[r]&
      \alpha_{\tcm(h_1)\tcm(h_2)}&
      \alpha_{\tcm(h_1)}\alpha_{\tcm(h_2)} \ar@{=>}[l]^-{\omega(\tcm(h_1),\tcm(h_2))}
    }
  \end{gathered}
\end{equation}
This replaces the condition \(\twc_{h_1}\twc_{h_2} = \twc_{h_1h_2}\) for a strict crossed module action. With some effort, one can verify that \eqref{eq:reduce-g} tells us that \eqref{eq:weak_action_crossed_module_2} is equivalent to \(v_n \cdot v_m = v_{n\vertprod m}\).

The third condition \(\alpha_g(\twc_h) = \twc_{\acm_g(h)}\) for a strict crossed module action ensures that the map \((h,g)\mapsto \twc_{(h,g)}\) is compatible with horizontal products.  For general weak actions, this becomes the following commuting diagram:
\begin{equation}
  \label{eq:weak_action_crossed_module_3}
  \begin{gathered}
    \xymatrix@C+2em{
      \alpha_g\alpha_1\alpha_{g^{-1}} \ar@{=>}[r]^-{\omega(g,1)}
      \ar@{=>}[d]^{\alpha_g(\twc_h)}&
      \alpha_g\alpha_{g^{-1}} \ar@{=>}[r]^-{\omega(g,g^{-1})}&
      \alpha_1 \ar@{=>}[d]^{\twc_{\acm_g(h)}}\\
      \alpha_g\alpha_{\tcm(h)}\alpha_{g^{-1}} \ar@{=>}[r]_-{\omega(g,\tcm(h))}&
      \alpha_{g\tcm(h)}\alpha_{g^{-1}} \ar@{=>}[r]_-{\omega(g\tcm(h),g^{-1})}&
      \alpha_{g\tcm(h)g^{-1}}.
    }
  \end{gathered}
\end{equation}
Take \(a=(h_1, f_1)\), \(b=(h_2, f_2)\). Then \eqref{eq:reduce-g} tells us the relation between \(v_{a\horizprod b}\) and \(v_{h_1 \vertprod c_{f_1}(h_2)}\), and \eqref{eq:weak_action_crossed_module_2} tells us the relation between \(v_{h_1 \vertprod c_{f_1}(h_2)}\) and \(v_{h_1} \cdot v_{c_{f_1}(h_2)}\).  However, since~\(\Csttwocat\) is a strict \(2\)\nb-category, the similar formula for crossed modules of groups works here and \(v_a \horizprod v_b\) can be expressed by \(v_{h_1} \cdot \alpha_{f_1} (v_{h_2})\).  Therefore, the naturality condition~\eqref{eq:weak_action_naturality_horizontal} is reduced to \eqref{eq:weak_action_crossed_module_3}.

Summing up, a weak action on \(\Csttwocat\) of the strict \(2\)\nb-groupoid~\(\cC\) associated to a crossed module \((G,H,\tcm,\acm)\) is given by \(\Cst\)\nb-algebras~\(A_x\), \Star{}isomorphisms \(\alpha_g\colon A_{\source(g)}\to A_{\target(g)}\), unitary multipliers \(\omega(g_1,g_2)\in \U\Mult(A_{\target(g_1)})\) for composable \(g_1,g_2\in G\), and \(\twc_h\in\U\Mult(A_x)\) for \(h\in H_x\), such that~\(\alpha\) is a groupoid homomorphism and equations \eqref{eq:weak_action_omega}, \eqref{eq:Busby_Smith_cocycle}, \eqref{eq:weak_action_crossed_module_1}, \eqref{eq:weak_action_crossed_module_2}, and \eqref{eq:weak_action_crossed_module_3} hold.  These rather complicated conditions describe the correct way to combine twists of group actions in the sense of Busby--Smith and Green.

There is also an equivalence between topological crossed modules and topological strict \(2\)\nb-groupoids.  We may add continuity requirements to the weak actions.  This works as in~\cite{Buss-Meyer-Zhu:Non-Hausdorff_symmetries} and in our discussion of weak actions on \(\Corrcat\) in Section~\ref{sec:topological_group}.  We require the spaces \(X\), \(G\), and~\(H\) to be locally compact, and we replace the set of \(\Cst\)\nb-algebras \((A_x)_{x\in X}\) by a \(\Cont_0(X)\)-\(\Cst\)\nb-algebra.  The continuity of~\(\alpha_g\) means that these automorphisms form the fibre restrictions of a \(\Cont_0(G)\)-linear \Star{}isomorphism \(\source^*(A)\to\target^*(A)\).  The continuity of \(\omega(g_1,g_2)\) and~\(\twc_h\) means that they form fibre restrictions of a unitary multiplier of the pull-back of~\(A\) to \(G\times_X G\) and~\(H\), respectively.  It makes no difference whether we require \(h\mapsto \twc_h\) or \((h,g)\mapsto \twc_{(h,g)}\) to be continuous.

Consider the crossed module of a closed normal subgroup \(N\triangleleft G\), so that its strict actions are exactly the twisted actions in Green's sense (see~\cite{Buss-Meyer-Zhu:Non-Hausdorff_symmetries}).  In this case, it is well-known that Green twisted actions of \(N\to G\) may be replaced by (measurable) Busby--Smith twisted actions of the quotient group~\(G/N\).  For a general crossed module, this is not possible because there is no good analogue of the quotient group~\(G/N\).  Conversely, we will see that after stabilisation the Busby--Smith twists can be removed, so that weak actions of a crossed module are stably equivalent to strict actions in a certain sense.

The reduction of Green twisted actions to Busby--Smith twisted actions is related to the Morita equivalence between the weak topological \(2\)\nb-categories \(N\to G\) and~\(G/N\), which \emph{should} induce an equivalence between the corresponding categories of weak actions.  This is indeed true for discrete weak \(2\)\nb-categories, but there are some technical problems with topologies here --~this is why the Busby--Smith twisted action of~\(G/N\) associated to a Green twisted action is, in general, only \emph{measurable}.  This problem appears because we used a simplified notion of weak action.  For topological \(2\)\nb-categories, our definitions involve continuous functors between topological groupoids.  As in more classical situations, these should be replaced by \emph{Hilsum--Skandalis morphisms} (see~\cite{Hilsum-Skandalis:Morphismes}) for best results. But since the definitions above already seem sufficiently complicated, we do not discuss this modification of our definitions any further here.

\subsection{Transformations}
\label{sec:transformations_two}

Now we extend the definition of transformations or weakly equivariant maps in Section~\ref{sec:weakly_equivariant_maps} from group actions to general weak actions of weak \(2\)\nb-groupoids.

Let~\(G\) and~\(\Cattwo\) be weak \(2\)\nb-categories and let \((A,\alpha,\omega_A,u_A)\) and \((B,\beta,\omega_B,u_B)\) be morphisms \(G\rightrightarrows\Cattwo\); we view \(\alpha\) and~\(\beta\) as functors, so that they also contain maps \(G_2\to\Cattwo_2\).

A \emph{transformation} between these two weak actions consists of the following data:
\begin{itemize}
\item arrows \(f_x\colon A_x \to B_x\) for \(x\in G_0\), and

\item bigons \(V_g\colon \beta_g f_{\source(g)} \Rightarrow f_{\target(g)}\alpha_g\) for \(g\in G_1\).
\end{itemize}
We require these bigons to be natural, that is, for each bigon \(n\colon g \Rightarrow h\), we require a commuting diagram
\begin{equation}
  \label{eq:transformation_weak_natural_1}
  \begin{gathered}
    \xymatrix@C+2em{
      \beta_g f_{\source(g)} \ar@{=>}[r]^-{V_g}
      \ar@{=>}[d]_{\beta_n}&
      f_{\target(g)}\alpha_g \ar@{=>}[d]^{f_{\target(g)}(\alpha_n)}\\
      \beta_h f_{\source(h)} \ar@{=>}[r]_-{V_h}&
      f_{\target(h)}\alpha_h.
    }
  \end{gathered}
\end{equation}
And we require exactly the same coherence laws \eqref{eq:weakly_equivariant_twist1} and~\eqref{eq:weakly_equivariant_twist2} as for actions of groups (but now we decorate the maps~\(f\) by appropriate indices indicating objects of~\(G\)).

If the bigons~\(V_g\) are invertible, the transformation is called \emph{strong}.  If all bigons~\(V_g\) are identity bigons, the transformation is called \emph{strict} or an \emph{equivariant map}.  If the transformation is strong with \(A=B\) and \(f_x=\Id_{A_x}\) for all \(x\in G_0\), then we also call it an (exterior) \emph{equivalence} between the two weak actions.

Now we consider the case where a strict \(2\)\nb-groupoid~\(\cC\) corresponding to a crossed module \((G,H,\tcm,\acm)\) acts weakly on \(\Csttwocat\).  Since all bigons in~\(\Csttwocat\) are invertible, any transformation to this category is strong.  It should be clear by now how to formulate continuity conditions in case \(X\), \(G\) and~\(H\) are locally compact.

\begin{remark}
  \label{rem:transformation_from_untaries}
  The construction in Definition~\ref{def:equivalence} also works for weak actions of weak \(2\)\nb-categories: given a weak action \((B,\beta,\omega,u)\) and invertible bigons \(V_g\colon \beta_g\Rightarrow \beta'_g\), there is a unique weak action \((B,\beta',\omega',u')\) such that \((\Id_B,V_g)\) is a strong transformation from \((B,\beta',\omega',u')\) to \((B,\beta,\omega,u)\).
\end{remark}

As a consequence, Lemmas \ref{lem:decompose_strong_strict_equivalence} and~\ref{lem:weaken_unit} remain true for weak actions of weak \(2\)\nb-categories, that is, any (continuous) transformation between (continuous) weak actions in \(\Csttwocat\) decomposes into a (continuous) strictly equivariant map and a (continuous) equivalence, and any (continuous) weak action in \(\Csttwocat\) is (continuously) equivalent to a (continuous) weak action with \(u_x=1\) for all \(x\in X\), so that \(\alpha_{1_x}=\Id_{A_x}\) and \(\omega(g,1_{\source(g)})=1\) and \(\omega(1_{\target(g)},g)=1\) for all \(g\in G\).  Recall that the bigons \(u_x\colon \Id_{A_x}\Rightarrow \alpha_{1_x}\) in a general weak action in \(\Csttwocat\) are redundant: \(u_x= \omega^*(1_x,1_x)\).

Let \((A, \alpha, \omega_A,\twc^A)\) and \((B, \beta, \omega_B,\twc^B)\) be the data describing two weak actions of~\(\cC\) as in Section~\ref{sec:morphisms_two}.  A transformation between them consists of non-degenerate \Star{}homomorphisms \(f_x\colon A_x \to \Mult(B_x)\) for \(x\in G_0\) and unitary multipliers \(V_g \in \U\Mult(B_{\target(g)})\) for \(g\in G_1\), satisfying~\eqref{eq:transformation_intertwine} and~\eqref{eq:transformation_cocycle} --~these are the conditions for transformations between Busby--Smith twisted actions~-- and the additional naturality condition
\begin{equation}
  \label{eq:naturality-csttwounit}
  V_{\tcm(h)} \cdot \twc^B_h = f(\twc^A_h) \cdot V_1,
\end{equation}
for all \(h\in H\); this implies the stronger naturality condition \(V_{\tcm(h)g} \cdot \twc^B_{(h,g)} = f(\twc^A_{(h,g)}) \cdot V_g\) for all \((h,g)\in H\times_X G\), which is equivalent to~\eqref{eq:transformation_weak_natural_1}.  Continuity means that the maps~\(f_x\) are the fibre restrictions of a non-degenerate \Star{}homomorphism \(f\colon A\to \Mult(B)\) and the unitary multipliers~\(V_g\) are the fibre restrictions of a unitary multiplier~\(V\) of the pull-back of~\(B\) to~\(G\).

If our crossed module is just a group, so that we are dealing with Busby--Smith twisted actions, then we get exactly the same notion of transformation as in Section~\ref{sec:weakly_equivariant_maps}.

If our actions are strict, that is, \(\omega_A=1\) and \(\omega_B=1\), then \(f\) and~\((V_g)\) form a transformation if and only if they satisfy \eqref{eq:transformation_intertwine}, \eqref{eq:naturality-csttwounit}, and the condition
\[
V_{g_1g_2} = V_{g_1} \cdot \beta_{g_1}(V_{g_2})
\]
for all \(g_1,g_2\in G\), which comes from~\eqref{eq:transformation_cocycle}.  If we also require~\(f\) to be invertible, this gives us exactly the definition of exterior equivalence of Green's twisted action in~\cite{Echterhoff:Morita_twisted}.

\subsection{Modifications of transformations}
\label{sec:equiv_weak_equivariant_gpd}

Now we weaken equality of transformations to the notion of a modification as in Section~\ref{sec:equiv_weak_equivariant}.  Let \((A,\alpha,\omega_A,u_A)\) and \((B,\beta,\omega_B,u_B)\) be two morphisms from~\(G\) to a \(2\)\nb-category~\(\Cattwo\) and let \((f,V)\) and \((f',V')\) be transformations between them.  A \emph{modification} \((f,V) \Rightarrow (f',V')\) consists of bigons \(W_x\colon f_x\Rightarrow f'_x\) for all \(x\in G_0\) that satisfy the coherence law
\begin{equation}
  \label{eq:modification_gpd_action}
  \begin{gathered}
    \xymatrix{
      \beta_g f_y \ar@{=>}[r]^{V_g}
      \ar@{=>}[d]_{\beta_g\horizprod W}&
      f_x \alpha_g \ar@{=>}[d]^{W\horizprod \alpha_g}\\
      \beta_g f'_y \ar@{=>}[r]^{V'_g}&
      f'_x\alpha_g.
    }
  \end{gathered}
\end{equation}
analogous to~\eqref{eq:modification_group_action} (see~\cite{Leinster:Basic_Bicategories}).  Notice that this naturality condition does not involve the bigons of~\(G\).

An invertible modification \(W\colon (f,V)\Rightarrow (f',V')\) (that is, the bigon~\(W\) is invertible) is also called an \emph{equivalence}, and then \((f,V)\) and~\((f',V')\) are called \emph{equivalent}.

Now we specialise this to the category~\(\Csttwocat\).  Let \((f,V)\) and \((f',V')\) be transformations from \((A,\alpha,\omega_A,u_A,v^A)\) to \((B,\beta,\omega_B,u_A,v^A)\).  Since any bigon in~\(\Csttwocat\) is invertible, any modification is an equivalence.  A modification \((f,V)\Rightarrow (f',V')\) consists of unitary multipliers~\(W_x\) of~\(B_x\) that satisfy
\begin{equation}
  \label{eq:modification_Cstar1}
  f'_x(a) = W_x\cdot f_x(a)\cdot W_x^*
  \qquad\text{and}\qquad
  V'_g = W_x\cdot V_g\cdot \beta_g(W_y^*)
\end{equation}
for all \(a\in A_x\), \(g\in G_1\) with \(\target(g)=x\) and \(\source(g)=y\).  Continuity means that the unitary multipliers~\(W_x\) are the fibre restrictions of a multiplier of~\(A\).

\section{Stabilisation results}
\label{sec:stabilisation}

In this section, we use known results about Hilbert modules and about Busby--Smith twisted actions to show that, after stabilisation, all \emph{weak} actions of \emph{strict} \(2\)\nb-categories by correspondences become equivalent to strict actions.  The starting point is the following well-known Triviality Theorem:

\begin{theorem}
  \label{the:triviality_theorem}
  Let \(A\) and~\(B\) be \(\sigma\)\nb-unital \(\Cst\)\nb-algebras and let~\(\Hilm\) be a correspondence from~\(A\) to~\(B\).  If~\(A\) is stable and~\(\Hilm\) is countably generated and full as a Hilbert \(B\)\nb-module, then the underlying Hilbert \(B\)\nb-module of~\(\Hilm\) is unitarily isomorphic to \(B^\infty \defeq B\otimes \ell^2(\N)\).
\end{theorem}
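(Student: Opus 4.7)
The plan is to combine two classical Hilbert module results.  First, stability of~\(A\) will force~\(\Hilm\) to absorb countable direct sums of itself, giving a Hilbert \(B\)\nb-module isomorphism \(\Hilm\cong\Hilm^\infty\).  Second, a strong form of Kasparov's Stabilisation Theorem applied to the full countably generated Hilbert \(B\)\nb-module~\(\Hilm\) will give \(\Hilm^\infty\cong B^\infty\).  Chaining the two isomorphisms yields the conclusion.

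For the first step, the stability and \(\sigma\)\nb-unitality of~\(A\) provide, via the isomorphism \(A\cong A\otimes\Comp\), a sequence of isometries \(s_n\in\Mult(A)\) with \(s_n^*s_m=\delta_{nm}\) and \(\sum_n s_ns_n^*=1\) in the strict topology; concretely, these arise as the images of the standard isometries inside \(\Mult(\Comp)\cong\Bound(\ell^2(\N))\) under the canonical embedding \(\Mult(\Comp)\hookrightarrow\Mult(A\otimes\Comp)\).  The non-degenerate \Star{}representation \(\phi\colon A\to\Bound(\Hilm)\) extends uniquely to a unital, strictly continuous \Star{}homomorphism \(\bar\phi\colon\Mult(A)\to\Bound(\Hilm)\), producing isometries \(S_n\defeq\bar\phi(s_n)\) in~\(\Bound(\Hilm)\) with orthogonal ranges and strict sum~\(1\).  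The map \((\xi_n)_{n\in\N}\mapsto\sum_n S_n\xi_n\) is then a unitary Hilbert \(B\)\nb-module isomorphism \(\Hilm^\infty\congto\Hilm\): its isometry is immediate from \(S_n^*S_m=\delta_{nm}\), and its surjectivity from \(\sum_n S_nS_n^*=1\).

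For the second step, I invoke Kasparov's Stabilisation Theorem in the strong form that applies to \emph{full} countably generated modules (see e.g.\ Lance's book on Hilbert \(\Cst\)\nb-modules): for a full countably generated Hilbert \(B\)\nb-module~\(\Hilm\) with~\(B\) \(\sigma\)\nb-unital, one has \(\Hilm\otimes\ell^2(\N)\cong B\otimes\ell^2(\N)=B^\infty\) as Hilbert \(B\)\nb-modules.  Combined with the first step this yields \(\Hilm\cong\Hilm^\infty\cong B^\infty\), completing the argument.  The main technical obstacle sits in this second step: the usual form of Kasparov Stabilisation only gives \(\Hilm\oplus B^\infty\cong B^\infty\), and fullness of~\(\Hilm\) is essential to upgrade this to the stronger isomorphism \(\Hilm^\infty\cong B^\infty\) without an extra \(B^\infty\) summand; without fullness the conclusion genuinely fails (for instance when~\(\Hilm\) is supported on a proper ideal of~\(B\)).
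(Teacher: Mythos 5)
Your proof is correct and follows essentially the same route as the paper: both use the stability of~\(A\) together with the non-degeneracy of the left action to write~\(\Hilm\) as a countable orthogonal direct sum of copies of a single module, and then invoke the Mingo--Phillips result that a full, countably generated Hilbert \(B\)\nb-module~\(E\) over a \(\sigma\)\nb-unital~\(B\) satisfies \(E^\infty\cong B^\infty\). The only cosmetic difference is that the paper decomposes~\(\Hilm\) via the matrix units \(e_{ii}\) into corners \(\Hilm_i\cong\Hilm_0\) and applies that theorem to~\(\Hilm_0\), whereas you use a sequence of isometries to get \(\Hilm\cong\Hilm^\infty\) directly and apply it to~\(\Hilm\) itself.
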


\begin{proof}
  Since~\(A\) is stable, \(A\cong \Comp(\ell^2\N) \otimes A_0\) (with \(A\cong A_0\)).  The \Star{}representation of~\(A\) on~\(\Hilm\) extends to a strictly continuous unital \Star{}homomorphism on the multiplier algebra, so that~\(\Hilm\) carries a \Star{}representation \(\pi\colon \Comp(\ell^2\N)\to\Bound(\Hilm)\).  Let~\(e_{ij}\) for \(i,j\in\nobreak\N\) be the matrix units in \(\Comp(\ell^2\N)\).  The operators \(\pi(e_{ii})\) in \(\Bound(\Hilm)\) are orthogonal projections with strict convergence \(\sum_{i\in\N} \pi(e_{ii}) = 1\).  Letting \(\Hilm_i\subseteq\Hilm\) be the range of~\(\pi(e_{ii})\), we get a direct sum decomposition \(\Hilm\cong \bigoplus_{i\in \N} \Hilm_i\).  The operator~\(\pi(e_{ij})\) restricts to a unitary operator between~\(\Hilm_j\) and~\(\Hilm_i\), so that we may identify each summand~\(\Hilm_j\) with~\(\Hilm_0\).  Thus \(\Hilm\cong \Hilm_0^\infty\).  Since~\(\Hilm\) is full, so is~\(\Hilm_0\).  Finally, \cite{Mingo-Phillips:Triviality}*{Theorem 1.9} yields \(\Hilm_0^\infty \cong B^\infty\) because~\(\Hilm_0\) is countably generated and full.
\end{proof}

Next we formulate the Packer--Raeburn Stabilisation Trick in our context (compare \cite{Kaliszewski:Morita_twisted}*{Theorem 2.1}).  Let \(A_\Comp\defeq A\otimes\Comp(\ell^2\N)\) denote the \(\Cst\)\nb-stabilisation of a \(\Cst\)\nb-algebra~\(A\).  A continuous weak or strong action of a weak \(2\)\nb-category on~\(A\) by \Star{}automorphisms or correspondences induces a continuous action of the same kind on its stabilisation~\(A_\Comp\): simply map all \Star{}homomorphisms~\(\alpha\) in the data to \(\alpha_\Comp\defeq \alpha\otimes\Id_\Comp\) and all unitaries~\(u\) in the data to \(u_\Comp\defeq u\otimes 1\).

\begin{proposition}[Packer--Raeburn Stabilisation Trick]
  \label{pro:stable_action_trivialise}
  Let~\(G\) be a second countable locally compact groupoid with Haar system, let~\(X\) be its object space.  Let \((B,\beta,\omega)\) be a continuous weak action of~\(G\) on a \(\Cst\)\nb-algebra~\(B\) over~\(X\).  Then the weak action \((B_\Comp,\beta_\Comp,\omega_\Comp)\) is equivalent to a strict action, that is, to an action of the form \((\beta',\omega')\) with \(\omega'_{g,h}=1\) for all \(g,h\in G\).
\end{proposition}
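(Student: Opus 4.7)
The strategy is the Packer--Raeburn trick \cite{Kaliszewski:Morita_twisted}*{Theorem 2.1} extended from groups to groupoids. By Remark~\ref{rem:transformation_from_untaries} and Definition~\ref{def:equivalence}, it suffices to construct a continuous family of unitary multipliers $V$ of $\target^*B_\Comp$ over~$G$ satisfying
\[
V_{g_1}\cdot\beta^\Comp_{g_1}(V_{g_2}) = V_{g_1g_2}\cdot \omega^\Comp(g_1,g_2)
\]
for every composable pair $(g_1,g_2)\in G\times_X G$; then $\beta'_g\defeq \Ad_{V_g}\circ\beta^\Comp_g$ is a strict action equivalent to $(\beta^\Comp,\omega^\Comp)$.

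The unitaries $V_g$ arise from a regular representation that absorbs the twist~$\omega$. For each $x\in X$, the left Haar measure on the $\target$-fibre $G^x$ equips $\Hilm_x\defeq L^2(G^x)\otimes B_x$ with a Hilbert $B_x$-module structure, and second countability of~$G$ combined with continuity of the Haar system assembles these into a continuous field~$\Hilm$ of Hilbert $B$-modules over~$X$. A Packer--Raeburn-type formula involving left translation by~$g$, the automorphism~$\beta_g$, and the multiplier $\omega(g,-)$ defines unitaries on the appropriate Hilbert bimodules associated to~$\Hilm$; the Busby--Smith cocycle identity~\eqref{eq:Busby_Smith_cocycle} and the defining property of $\omega$ as a bigon $\beta_{g_1}\beta_{g_2}\Rightarrow \beta_{g_1g_2}$ translate, by a direct computation, into the trivialising identity displayed above.

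To transport these unitaries from $\mathcal{K}(\Hilm)$ to $B_\Comp$, invoke the Triviality Theorem~\ref{the:triviality_theorem}. Each $\Hilm_x$ is countably generated (as~$G$ is second countable) and full (as $G^x\ni 1_x$), so, after absorbing an additional copy of $\ell^2(\N)$ if necessary, Theorem~\ref{the:triviality_theorem} provides fibrewise isomorphisms $\Hilm_x\otimes\ell^2(\N)\cong (B_\Comp)_x$ of Hilbert $B_x$-modules. A $\Cont_0(X)$-linear version of this isomorphism glues the fibrewise data into a global unitary $\Hilm\otimes\ell^2(\N)\cong B_\Comp$ under which the unitaries constructed above become the desired family of unitary multipliers~$V_g$ of $\target^*B_\Comp$. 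Continuity of~$V$ is inherited from that of $\beta$, $\omega$, and the Haar system.

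The main technical obstacle is the continuous globalisation of the Triviality Theorem: its statement is fibrewise, whereas we need the trivialising isomorphisms to depend continuously on $x\in X$. In the group case ($X$ a point) this difficulty vanishes, which is why \cite{Kaliszewski:Morita_twisted}*{Theorem 2.1} is essentially an algebraic identity; for general~$X$ one uses a parametric (that is, $\Cont_0(X)$-linear) version of Kasparov's stabilisation theorem applied to the continuous field~$\Hilm$.
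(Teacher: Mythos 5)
Your plan follows the paper's proof essentially verbatim: the paper likewise defines the twisted regular representation \((V_g f)(h) \defeq \omega(g,g^{-1}h)f(g^{-1}h)\) on \(\Cont_c(G,B)\), checks the trivialising identity \(V_{g_1g_2}\cdot\omega(g_1,g_2)=V_{g_1}\cdot\beta_{g_1}(V_{g_2})\) via the cocycle condition~\eqref{eq:Busby_Smith_cocycle}, and absorbs \(\Comp\bigl(L^2(G)\bigr)\) into the stabilisation. The globalisation you flag as the main obstacle costs nothing extra: Theorem~\ref{the:triviality_theorem} is already stated for Hilbert modules over arbitrary \(\sigma\)\nb-unital \(\Cst\)\nb-algebras, so applied over \(\Cont_0(X)\) it identifies the continuous field \(L^2(G)\otimes\ell^2(\N)\) with the constant field \(\ell^2(\N)\), which is exactly how the paper identifies \(B\otimes\Comp\bigl(L^2(G)\otimes\ell^2(\N)\bigr)\) with \(B_\Comp\).
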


\begin{proof}
  Let~\((\lambda^x)_{x\in X}\) be a left invariant Haar system on~\(G\) and let \(L^2(G)\) be the continuous field of Hilbert spaces with fibres \(L^2(G^x,\lambda^x)\).  In our assumption, we use compact operators on the Hilbert space \(\ell^2\N\).  But we may as well use compact operators on the continuous field of Hilbert spaces \(L^2(G)\otimes\ell^2\N\) because the latter is isomorphic to a constant continuous field \(\ell^2\N\) --~this is actually a special case of Theorem~\ref{the:triviality_theorem}.  Since we will not use the tensor factor \(\Comp(\ell^2\N)\) in the following, we subsume it in~\(B\) and assume from now on that we are dealing with a weak action of the form \((\beta\otimes\Id_\Comp,\omega\otimes1)\) on \(B\otimes \Comp\bigl(L^2(G)\bigr)\).

  We want to use the construction in Definition~\ref{def:equivalence} to construct an equivalence \((V_g)_{g\in G}\) between \((\beta\otimes\Id_\Comp,\omega\otimes1)\) and a strict action, that is, a weak action of the form \((\beta',\omega')\) with \(\omega'\equiv1\); here \(V_g\colon \beta_g\Rightarrow\beta'_g\).  Equation~\ref{eq:horizprod_Cstartwocat} yields \(V_{g_1}^{-1}\horizprod V_{g_2}^{-1} =  (\beta\otimes\Id_\Comp)_{g_1}(V_{g_2}^{-1})\cdot V_{g_1}^{-1}\), so that
  \[
  \omega'(g_1,g_2) = V_{g_1g_2}\cdot (\omega\otimes1)(g_1,g_2) \cdot (\beta\otimes\Id_\Comp)_{g_1}(V_{g_2}^{-1}) V_{g_1}^{-1}.
  \]
  Hence we need~\(V_g\) to satisfy \(V_{g_1g_2}\cdot (\omega\otimes1)(g_1,g_2) = V_{g_1}\cdot (\beta\otimes\Id_\Comp)_{g_1}(V_{g_2})\).  We also want the equivalence to be continuous, that is, \(V=(V_g)_{g\in G}\) should be a unitary operator on the pull-back of \(L^2(G)\otimes_X B\) along the range map \(\target\colon G\to X\).

  Combining pointwise multiplication by \(\omega(g,h)\) and the left regular representation of~\(G\), we define
  \[
  (V_g f)(h) \defeq \omega(g,g^{-1}h) f(g^{-1}h)
  \qquad\text{for all \(f\in \Cont_c(G,B)\), \(g\in G\), \(h\in G^{\target(g)}\),}
  \]
  where \(f(g)\in B_{\target(g)}\).  Note that \((\beta\otimes\Id_\Comp)_{g_1}(V_{g_2}) f(h)= \beta_{g_1}\bigl(\omega(g_2,g_2^{-1}h)\bigr) f(g_2^{-1}h)\) and hence
  \begin{align*}
    V_{g_1}\cdot (\beta\otimes\Id_\Comp)_{g_1}(V_{g_2}) f(h)
    &= \omega(g_1,g_1^{-1}h) (\beta\otimes\Id_\Comp)_{g_1}(V_{g_2})
    f(g_1^{-1}h)\\
    &= \omega(g_1,g_1^{-1}h)
    \beta_{g_1}\bigl(\omega(g_2,g_2^{-1}g_1^{-1}h)\bigr)
    f(g_2^{-1}g_1^{-1}h),\\
    (V_{g_1g_2}\cdot (\omega\otimes1)(g_1,g_2) f)(h)
    &= \omega(g_1g_2,(g_1g_2)^{-1}h)\bigl((\omega\otimes1)(g_1,g_2)
    f\bigr) \bigl((g_1g_2)^{-1}h\bigr)\\
    &= \omega(g_1g_2,g_2^{-1}g_1^{-1}h) \omega(g_1,g_2)
    f(g_2^{-1}g_1^{-1}h).
  \end{align*}
  These two are equal by the groupoid generalisation of the cocycle condition~\eqref{eq:Busby_Smith_cocycle} for~\(\omega\), applied to \((g_1,g_2,g_2^{-1}g_1^{-1}h)\).  Therefore, when we use the groupoid generalisations of \eqref{eq:outer_equivalence_intertwine} and~\eqref{eq:outer_equivalence_cocycle} to construct an equivalent weak action, we get \(\beta'_g(b) = V_g (\beta\otimes\Id)_g(b) V_g^*\) for all \(b\in B\otimes\Comp\), \(g\in G\) and \(\omega'(g_1,g_2)=1\) for all \(g_1,g_2\).  Thus~\((\beta',1)\) is a strict continuous action that is continuously equivalent to \((\beta,\omega)\).
\end{proof}

\begin{theorem}
  \label{the:simplify_correspondence_weak}
  Let \((G,H,\tcm,\acm)\) be a crossed module of second countable, locally compact groupoids with object space~\(X\).  Let~\(A\) be a \(\sigma\)\nb-unital stable \(\Cont_0(X)\)-\(\Cst\)\nb-algebra.  Then any continuous action \((\alpha,\omega,u)\) by correspondences of \((G,H,\tcm,\acm)\) on~\(A\) is equivalent to a strict action, that is, there are a continuous strict action of the crossed module \((G,H,\tcm,\acm)\) on~\(A\) in the sense of~\textup{\cite{Buss-Meyer-Zhu:Non-Hausdorff_symmetries}} and an invertible continuous transformation from~\(A\) with the action \((\alpha,\omega,u)\) to~\(A\) with this strict action.
\end{theorem}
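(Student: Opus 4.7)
The plan is to reduce the statement to the Packer--Raeburn Stabilisation Trick (Proposition~\ref{pro:stable_action_trivialise}) in two stages: first trivialise the Hilbert-module part of the weak action by correspondences, thereby landing in $\Csttwocat$, and then apply Packer--Raeburn to eliminate the remaining Busby--Smith twist, checking that what is left is a strict crossed-module action in the sense of~\cite{Buss-Meyer-Zhu:Non-Hausdorff_symmetries}.

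First we encode the given continuous action by correspondences as a $\Cont_0(G)$-linear correspondence~$\alpha$ from $\source^*A$ to $\target^*A$, whose fibres $\alpha_g$ are $A_{\source(g)},A_{\target(g)}$-imprimitivity bimodules by Proposition~\ref{pro:equivalence_correspondence}. Since~$A$ is $\sigma$-unital and stable and~$G$ is second countable, both pull-backs $\source^*A$ and $\target^*A$ are $\sigma$-unital and stable, and the Hilbert $\target^*A$-module~$\alpha$ is countably generated and full. Theorem~\ref{the:triviality_theorem} then supplies a unitary of Hilbert modules $\alpha \congto \target^*A$. Transporting the nondegenerate left action of $\source^*A$ through this unitary gives a $\Cont_0(G)$-linear nondegenerate \Star{}homomorphism $\source^*A\to\Mult(\target^*A)$ whose fibres $\tilde\alpha_g$ must take values in $\Comp(\alpha_g)\cong A_{\target(g)}$ and must be \Star{}isomorphisms, because each $\alpha_g$ is an imprimitivity bimodule. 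By the general construction in Remark~\ref{rem:transformation_from_untaries}, this unitary is a continuous equivalence in~$\Corrcat$ from $(\alpha,\omega,u,v)$ to a weak action $(\tilde\alpha,\tilde\omega,\tilde u,\tilde v)$ all of whose arrows come from~$\Csttwocat$.

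Secondly we apply the groupoid Packer--Raeburn trick to $(\tilde\alpha,\tilde\omega,\tilde u,\tilde v)$. After absorbing a tensor factor $\Comp(L^2(G))$ into~$A$ via stability, exactly as in the proof of Proposition~\ref{pro:stable_action_trivialise}, the unitaries
\[
(V_g f)(h) \defeq \tilde\omega(g,g^{-1}h)\,f(g^{-1}h)
\]
on the pull-back of $L^2(G)\otimes_{\Cont_0(X)}A$ along~$\target$ define a continuous equivalence in~$\Csttwocat$ from $(\tilde\alpha,\tilde\omega,\tilde u,\tilde v)$ to a new weak action $(\alpha',\omega',u',v')$ with $\omega'\equiv 1$ and $u'\equiv 1$. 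The naturality condition~\eqref{eq:naturality-csttwounit}, applied with $f=\Id$, forces the new crossed-module unitaries to be $v'_h \defeq V_{\tcm(h)}\cdot \tilde v_h\cdot V_1^{-1}$. Composing this equivalence with the one from the first step yields a continuous equivalence from $(\alpha,\omega,u,v)$ to $(\alpha',1,1,v')$. Because $\omega'=1$ and $u'=1$, the coherence laws \eqref{eq:weak_action_crossed_module_1}, \eqref{eq:weak_action_crossed_module_2}, and~\eqref{eq:weak_action_crossed_module_3} collapse to precisely the three defining equations of a strict crossed-module action, so $(\alpha',v')$ is the desired strict action.

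The main obstacle I expect is the continuous, parameterised form of the Triviality Theorem in the first step: one must assemble the fibrewise trivialisations into a single unitary of Hilbert $\target^*A$-modules, and this ultimately depends on verifying that~$\alpha$ is countably generated globally over $\target^*A$, not merely fibrewise. This is where the second-countability of~$G$ together with $\sigma$-unitality and stability of~$A$ enter; modulo these hypotheses, \cite{Mingo-Phillips:Triviality} gives exactly the global statement needed. A secondary bookkeeping task is to check that the composed equivalence is genuinely continuous in the sense of Section~\ref{sec:topological_group} and that $v'$ satisfies the remaining crossed-module axioms involving the conjugation~$\acm$, but both of these are routine algebraic manipulations once the explicit formula for $v'_h$ is in place.
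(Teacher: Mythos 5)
Your proposal is correct and follows essentially the same two-step strategy as the paper: first the Triviality Theorem (Theorem~\ref{the:triviality_theorem}) trivialises the underlying Hilbert module of~$\alpha$ and lands the action in~$\Csttwocat$, then the groupoid Packer--Raeburn trick (Proposition~\ref{pro:stable_action_trivialise}) kills the Busby--Smith twist, with Remark~\ref{rem:transformation_from_untaries} and the naturality condition~\eqref{eq:naturality-csttwounit} transporting the crossed-module unitaries~$v$ along the way. The only cosmetic divergence is that you absorb $\Comp(L^2(G))$ directly into the already-stable~$A$, whereas the paper routes through the stabilisation~$A_\Comp$ via the Morita equivalence $\ell^2(\N,A)$ and then returns to~$A$ by observing that an invertible correspondence between stable $\Cst$\nb-algebras is implemented by a \Star{}isomorphism.
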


\begin{proof}
  The proof proceeds in two steps.  First, we use Theorem~\ref{the:triviality_theorem} to construct an equivalence to a weak action by \Star{}automorphisms, that is, a weak action in \(\Csttwocat\).  Then we use the Packer--Raeburn Stabilisation Trick (Proposition~\ref{pro:stable_action_trivialise}) to replace the latter by a strict action.

  Let~\(\cC\) be the weak \(2\)\nb-category associated to the crossed module \((G,H,\tcm,\acm)\) and take a weak action of~\(\cC\) on~\(A\) by correspondences, that is, a continuous morphism \(\cC\to\Corrcat\) that is given on objects by the \(\Cont_0(X)\)-\(\Cst\)\nb-algebra~\(A\).  On the arrows, it is given by a \(\Cont_0(G)\)\nb-linear invertible correspondence~\(\alpha\) from \(\Cont_0(G,A)\) to itself.  For the time being, we concentrate on~\(\alpha\) and may ignore the remaining data.

  Since~\(\alpha\) is an imprimitivity bimodule, it is full as a right Hilbert \(\Cont_0(G,A)\)-module.  The \(\Cst\)\nb-algebra \(\Cont_0(G,A)\) is \(\sigma\)\nb-unital and stable because~\(A\) is \(\sigma\)\nb-unital and~\(G\) is \(\sigma\)\nb-compact.  Hence~\(\alpha\) is countably generated because \(\Comp(\alpha) \cong \Cont_0(G,A)\) is \(\sigma\)\nb-unital (see \cite{Mingo-Phillips:Triviality}*{Corollary 1.5}).  Theorem~\ref{the:triviality_theorem} implies \(\alpha \cong \Cont_0(G,A)^\infty \cong \Cont_0(G,A)\) because~\(A\) is stable.

  This unitary operator provides a strong transformation from the given action \((\alpha,\omega,u,\twc)\) to another weak action by correspondences \((\alpha'',\omega'',u'',\twc'')\) such that \(\alpha''=\Cont_0(G,A)\) as a right Hilbert \(\Cont_0(G,A)\)-module.  The left module structure on~\(\alpha''\) provides a \(\Cont_0(G)\)-linear \Star{}automorphism on \(\Cont_0(G,A)\).  By restriction to the fibres, this corresponds to a strongly continuous map \(\alpha'\colon G\to\Aut(A)\).  By construction, \(\alpha''=[\alpha']\) in the notation of Example~\ref{exa:automorphism_Hilbert_bimodule}.

  Multipliers of \(\Cont_0(G\times G,A)\) are strictly continuous maps from~\(G\times G\) to the multiplier algebra of~\(A\).  Hence we may view the unitary operators~\(\omega''\) on the Hilbert module \(\Cont_0(G\times G,A)\) as a strictly continuous family of unitary multipliers \(\omega'(g,h)\).  Similarly, we view \(u''\) and~\(\twc''\) as unitary multipliers \(u'\) and~\(\twc'\) of \(A\) and the pull-back of~\(A\) to~\(H\).  The resulting triple \((\alpha',\omega',u',\twc')\) is a weak action of~\(\cC\) by \Star{}automorphisms, that is, a morphism \(\cC\to\Csttwocat\).  We have completed the first step and shown that weak actions by correspondences on stable \(\Cst\)\nb-algebras are equivalent to weak actions by \Star{}automorphisms.

  In the second step, we let \((\alpha,\omega,u,\twc)\) be a weak action of~\(\cC\) by \Star{}automorphisms, which we continue to view as a weak action by correspondences.  The standard Morita equivalence \(f\defeq \ell^2(\N,A)\) between \(A\) and~\(A_\Comp\) provides an equivalence between \((\alpha,\omega,u,\twc)\) and \((\alpha_\Comp,\omega_\Comp,u_\Comp,\twc_\Comp)\).  More precisely, we must combine~\(f\) with the canonical isomorphisms
  \[
  V_g\colon \ell^2(\N,A)\otimes_A [\alpha_g] \congto
  \ell^2(\N)\otimes [\alpha_g] \congto
  [\alpha_g\otimes\Id_\Comp] \otimes_{A_\Comp} \ell^2(\N,A).
  \]
  Proposition~\ref{pro:stable_action_trivialise} shows that the weak action \((\alpha_\Comp,\omega_\Comp,u_\Comp)\) of~\(G\) is equivalent to a strict action~\(\alpha'\), that is, a weak action \((\alpha',\omega',u')\) with \(\omega'=1\) and \(u'=1\).  Now Remark~\ref{rem:transformation_from_untaries} shows that the invertible bigons that implement this equivalence also provide an equivalence of weak actions of the crossed module \((G,H)\); that is, we get a unique homomorphism \(\twc'\colon H\to \U\Mult(A_\Comp)\) such that \((\alpha',\omega,u',\twc')\) is a weak action equivalent to \((\alpha_\Comp,\omega_\Comp,u_\Comp,v_\Comp)\).  Composing the two equivalences above, we get a weak equivalence \((\Phi,W)\) between the action \((\alpha,\omega,u,\twc)\) of~\(\cC\) and a weak action \((\alpha',\omega,u',\twc')\) with \(u'=1\) and \(\omega'=1\).  The latter is nothing but a (strict) action of the crossed module~\(\cC\) in the usual sense.

  Since the equivalence \(A_\Comp\simeq A\) is only a correspondence, \(\Phi\) is only a correspondence as well.  But since~\(\Phi\) is an invertible correspondence between two stable \(\Cst\)\nb-algebras, it is of the form~\([\varphi]\) for a \Star{}isomorphism \(\varphi\colon A\to A_\Comp\).  Then \(\alpha''_g(a) \defeq \varphi\alpha_g(a)\varphi^{-1}\) for \(a\in A\) and \(\twc''_h \defeq \varphi\cdot\twc\) defines a continuous strict action of the crossed module on~\(A\) that is equivalent to the original action.
\end{proof}

\begin{corollary}
  If \((\alpha,\omega,u,v)\) is a continuous action by correspondences of a second countable, locally compact crossed module \((G,H,\tcm,\acm)\) with object space~\(X\) on a \(\sigma\)\nb-unital \(\Cont_0(X)\)-\(\Cst\)-algebra~\(A\), then the induced action on its stabilisation~\(A_\Comp\) is equivalent to a continuous strict action of \((G,H,\tcm,\acm)\) by \Star{}homomorphisms and unitary intertwiners.
\end{corollary}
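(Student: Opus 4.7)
The plan is to deduce this immediately from Theorem~\ref{the:simplify_correspondence_weak} by applying it to $A_\Comp$ with the induced action. The corollary only differs from the theorem in that we do not assume~$A$ itself is stable; instead we pass to~$A_\Comp$, which is automatically stable.

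First, I would observe that $A_\Comp = A\otimes\Comp(\ell^2\N)$ inherits a canonical $\Cont_0(X)$-$\Cst$-algebra structure from~$A$, is $\sigma$-unital (the tensor product of two $\sigma$-unital $\Cst$-algebras is $\sigma$-unital, and $\Comp(\ell^2\N)$ is $\sigma$-unital because it is separable), and is stable because $\Comp(\ell^2\N)\otimes\Comp(\ell^2\N)\cong\Comp(\ell^2\N)$. Thus $A_\Comp$ satisfies the hypotheses of Theorem~\ref{the:simplify_correspondence_weak}.

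Next I would verify that the induced structure $(\alpha_\Comp,\omega_\Comp,u_\Comp,v_\Comp)$ is a continuous weak action by correspondences of $(G,H,\tcm,\acm)$ on~$A_\Comp$. Concretely, $\alpha_\Comp$ is the $\Cont_0(G)$-linear correspondence $\alpha\otimes_{\Cont_0(G,A)}\Cont_0(G,A_\Comp)$ obtained by exterior tensoring with the identity correspondence on $\Comp(\ell^2\N)$; equivalently, on each fibre $(\alpha_\Comp)_g = \alpha_g\otimes\Comp(\ell^2\N)$ viewed as a correspondence from~$A_\Comp$ to itself. The unitary intertwiners $\omega,u,v$ extend by tensoring with $1_{\Comp}$ to unitary intertwiners $\omega_\Comp,u_\Comp,v_\Comp$ of the corresponding pull-backs of~$A_\Comp$, and the coherence laws \eqref{eq:group_coherence_unit}, \eqref{eq:group_coherence_associativity}, \eqref{eq:weak_action_crossed_module_1}, \eqref{eq:weak_action_crossed_module_2}, \eqref{eq:weak_action_crossed_module_3} are preserved because tensoring with $\Id_\Comp$ is a (strict) $2$-functor on $\Corrcat$. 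Continuity passes to the tensor product in the same way.

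Finally, I would invoke Theorem~\ref{the:simplify_correspondence_weak} applied to $A_\Comp$ and the continuous weak action $(\alpha_\Comp,\omega_\Comp,u_\Comp,v_\Comp)$ to conclude that this action is equivalent, via a continuous invertible transformation, to a continuous strict action of $(G,H,\tcm,\acm)$ on $A_\Comp$ by $\Star$-isomorphisms and unitary intertwiners. There is no genuine obstacle here: the work is already done in Theorem~\ref{the:simplify_correspondence_weak}, and the only thing to check is that the hypotheses transfer from~$A$ to~$A_\Comp$, which is immediate.
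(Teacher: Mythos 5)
Your proposal is correct and is exactly the argument the paper intends: the corollary is stated as an immediate consequence of Theorem~\ref{the:simplify_correspondence_weak}, obtained by applying that theorem to the stabilisation $A_\Comp$, which is $\sigma$\nb-unital and stable, with the induced weak action. Your verification that the induced data $(\alpha_\Comp,\omega_\Comp,u_\Comp,v_\Comp)$ is again a continuous weak action by correspondences (via tensoring with $\Id_\Comp$) is the only point that needs checking, and you handle it correctly.
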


By Theorem~\ref{the:group_act_Corrcat_continuous}, continuous actions of~\(G\) by correspondences correspond bijectively to saturated Fell bundles over~\(G\).  Moreover, invertible continuous transformations between weak actions by correspondences correspond bijectively to Morita equivalences of the associated Fell bundles.  Thus we immediately get the following consequence:

\begin{corollary}
  \label{cor:stable_Fell_bundle=ordinary_action}
  If~\(G\) is a locally compact, second countable locally compact group, then any saturated Fell bundle over~\(G\) for which the unit fiber is a \(\sigma\)\nb-unital stable \(\Cst\)\nb-algebra is Morita equivalent to a Fell bundle over~\(G\) that is associated to a strict \(G\)\nb-action.  Hence every Fell bundle is, up to stabilisation, Morita equivalent to one associated to a strict group action.
\end{corollary}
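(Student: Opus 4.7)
The plan is to deduce the corollary directly from Theorem~\ref{the:simplify_correspondence_weak}, together with the two translation results that are already in hand: Theorem~\ref{the:group_act_Corrcat_continuous}, which identifies saturated Fell bundles over~$G$ with continuous weak actions of~$G$ on their unit fibres by correspondences, and the continuous version of Proposition~\ref{pro:Morita_Fell_transformation}, which identifies Morita equivalences of Fell bundles with invertible continuous transformations between the associated weak actions by correspondences.

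First I would regard the locally compact second countable group~$G$ as the crossed module $(G,H,\tcm,\acm)$ with trivial group bundle~$H$ and single-point object space~$X$. In this case, the hypotheses of Theorem~\ref{the:simplify_correspondence_weak} on the $\Cont_0(X)$-$\Cst$-algebra reduce to the hypotheses in the corollary on the unit fibre $A\defeq A_1$. Given a saturated Fell bundle $(A_g)_{g\in G}$ with $\sigma$\nb-unital stable unit fibre~$A$, Theorem~\ref{the:group_act_Corrcat_continuous} produces an associated continuous weak action $(\alpha,\omega,u)$ of~$G$ on~$A$ by correspondences. Theorem~\ref{the:simplify_correspondence_weak} then yields a continuous strict action~$\alpha''$ of~$G$ on~$A$ by \Star{}automorphisms and an invertible continuous transformation from $(\alpha,\omega,u)$ to the weak action by correspondences underlying~$\alpha''$. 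Applying the continuous version of Proposition~\ref{pro:Morita_Fell_transformation} turns this invertible transformation into a Morita equivalence between the given Fell bundle $(A_g)_{g\in G}$ and the Fell bundle associated to the strict $G$\nb-action~$\alpha''$ via the construction of Remark~\ref{rem:Fell_from_action}, proving the first assertion.

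For the final claim, given an arbitrary saturated Fell bundle $(A_g)_{g\in G}$ with $\sigma$\nb-unital unit fibre~$A_1$, I would pass to the stabilised Fell bundle $(A_g \otimes \Comp)_{g\in G}$. This is again saturated, and its unit fibre $A_1 \otimes \Comp$ is automatically $\sigma$\nb-unital and stable, so the first part of the corollary applies and produces a Morita equivalence between the stabilised Fell bundle and one coming from a strict $G$\nb-action. This is exactly the statement that every saturated Fell bundle is, up to stabilisation, Morita equivalent to one associated to a strict group action.

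The substantive work has already been carried out inside Theorem~\ref{the:simplify_correspondence_weak} (which in turn depends on the Triviality Theorem~\ref{the:triviality_theorem} and the Packer--Raeburn trick in Proposition~\ref{pro:stable_action_trivialise}); so the proof of the corollary itself is just a dictionary argument. The only points that need a moment of care are verifying that the bijection of Proposition~\ref{pro:Morita_Fell_transformation} does extend to the continuous setting under the standing assumptions, and that the Fell bundle associated via Remark~\ref{rem:Fell_from_action} to a strict group action really is the classical semidirect product Fell bundle; both are straightforward checks given the detailed description in the proof of Theorem~\ref{the:group_act_Corrcat}.
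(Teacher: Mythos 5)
Your proposal is correct and follows essentially the same route as the paper, which derives the corollary immediately from Theorem~\ref{the:simplify_correspondence_weak} specialised to the trivial crossed module, combined with the dictionary of Theorem~\ref{the:group_act_Corrcat_continuous} and the continuous version of Proposition~\ref{pro:Morita_Fell_transformation}. The only minor point worth noting is that the second assertion implicitly still requires the unit fibre to be \(\sigma\)\nb-unital (so that the stabilised fibre is too), which you correctly make explicit.
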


\begin{remark}
  \label{rem:strictify_Fell_alternative}
  The above result is well-known to the specialists.  It can also be proved by combining the Packer--Raeburn Stabilisation Trick in~\cite{Kaliszewski:Morita_twisted} and the classification result of Ruy Exel for Fell bundles in~\cite{Exel:TwistedPartialActions}.  Indeed, the main result in~\cite{Exel:TwistedPartialActions} says that a Fell bundle~\(A\) over~\(G\) which is \emph{regular} --~a technical condition that need not concern us here~-- is isomorphic to a Fell bundle coming from a (Busby--Smith) \emph{twisted partial action} of~\(G\).  Twisted partial actions are natural generalisations of Busby--Smith twisted actions where one allows partially defined isomorphisms between ideals of a given \(\Cst\)\nb-algebra.  Following Exel's construction in~\cite{Exel:TwistedPartialActions}, we get a twisted \emph{global} action if the initial regular Fell bundle is saturated.  Thus Exel's main result in~\cite{Exel:TwistedPartialActions} implies that regular, saturated Fell bundles over~\(G\) correspond to Busby--Smith twisted (global) actions of~\(G\).  And by the Packer--Raeburn Stabilisation Trick, every twisted \(G\)\nb-action is Morita equivalent to an ordinary \(G\)\nb-action.  Moreover, as observed by Exel in~\cite{Exel:TwistedPartialActions}, Fell bundles with stable unit fiber are automatically regular.  All this together yields an alternative proof of Corollary~\ref{cor:stable_Fell_bundle=ordinary_action}.
\end{remark}

\section{Conclusion}
\label{sec:conclusion}

We have defined weak actions of weak \(2\)\nb-categories on \(\Cst\)\nb-algebras, transformations between such actions, and modifications of such transformations.  Our starting point was the notationally simpler case of group actions, where our notions specialise to known concepts like Busby--Smith twisted actions, equivalence of such actions, covariant representations, and unitary intertwiners between covariant representations.  Weak actions of strict \(2\)\nb-groupoids generalise Busby--Smith twisted actions and Green twisted actions at the same time.  Furthermore, when we use the correspondence category of \(\Cst\)\nb-algebras, we get notions related to saturated Fell bundles.

Finally, we used results about Morita equivalence of \(\Cst\)\nb-algebras to strictify weak actions after stabilisation.  This strictification is a rather unusual feature of \(\Cst\)\nb-algebras.  In most contexts, weak actions of \(2\)\nb-groupoids are far from equivalent to strict actions.

\begin{bibdiv}
  \begin{biblist}
\bib{Abadie-Eilers-Exel:Morita_bimodules}{article}{
  author={Abadie, Beatriz},
  author={Eilers, S\o ren},
  author={Exel, Ruy},
  title={Morita equivalence for crossed products by Hilbert $C^*$\nobreakdash -bimodules},
  journal={Trans. Amer. Math. Soc.},
  volume={350},
  date={1998},
  number={8},
  pages={3043--3054},
  issn={0002-9947},
  review={\MRref {1467459}{98k:46109}},
}

\bib{Baez:Introduction_n}{article}{
  author={Baez, John C.},
  title={An introduction to $n$\nobreakdash -categories},
  conference={ title={Category theory and computer science}, address={Santa Margherita Ligure}, date={1997}, },
  book={ series={Lecture Notes in Comput. Sci.}, volume={1290}, publisher={Springer}, place={Berlin}, },
  date={1997},
  pages={1--33},
  review={\MRref {1640335}{99h:18008}},
}

\bib{Baez-Dolan:Categorification}{article}{
  author={Baez, John C.},
  author={Dolan, James},
  title={Categorification},
  conference={ title={Higher category theory}, address={Evanston, IL}, date={1997}, },
  book={ series={Contemp. Math.}, volume={230}, publisher={Amer. Math. Soc.}, place={Providence, RI}, },
  date={1998},
  pages={1--36},
  review={\MRref {1664990}{99k:18016}},
}

\bib{Benabou:Bicategories}{article}{
  author={B\'enabou, Jean},
  title={Introduction to bicategories},
  conference={ title={Reports of the Midwest Category Seminar}, },
  book={ publisher={Springer}, place={Berlin}, },
  date={1967},
  pages={1--77},
  review={\MRref {0220789}{36\,\#3841}},
}

\bib{Brouwer:Bicategorical}{article}{
  author={Brouwer, Rachel M.},
  title={A bicategorical approach to Morita equivalence for von Neumann algebras},
  journal={J. Math. Phys.},
  volume={44},
  date={2003},
  number={5},
  pages={2206--2214},
  issn={0022-2488},
  review={\MRref {1972774}{2004b:46085}},
}

\bib{Busby-Smith:Representations_twisted_group}{article}{
  author={Busby, Robert C.},
  author={Smith, Harvey A.},
  title={Representations of twisted group algebras},
  journal={Trans. Amer. Math. Soc.},
  volume={149},
  date={1970},
  pages={503--537},
  issn={0002-9947},
  review={\MRref {0264418}{41\,\#9013}},
}

\bib{Buss-Meyer-Zhu:Non-Hausdorff_symmetries}{article}{
  author={Buss, Alcides},
  author={Meyer, Ralf},
  author={Zhu, Chenchang},
  title={Non-Hausdorff symmetries of \(C^*\)\nobreakdash -algebras},
  status={eprint},
  note={\arxiv {0907.0339}},
  date={2009},
}

\bib{Doran-Fell:Representations}{book}{
  author={Fell, James M. G.},
  author={Doran, Robert S.},
  title={Representations of $^*$\nobreakdash -algebras, locally compact groups, and Banach $^*$\nobreakdash -algebraic bundles. Vol. 1},
  series={Pure and Applied Mathematics},
  volume={125},
  note={Basic representation theory of groups and algebras},
  publisher={Academic Press Inc.},
  place={Boston, MA},
  date={1988},
  pages={xviii+746},
  isbn={0-12-252721-6},
  review={\MRref {936628}{90c:46001}},
}

\bib{Echterhoff:Morita_twisted}{article}{
  author={Echterhoff, Siegfried},
  title={Morita equivalent twisted actions and a new version of the Packer--Raeburn stabilization trick},
  journal={J. London Math. Soc. (2)},
  volume={50},
  date={1994},
  number={1},
  pages={170--186},
  issn={0024-6107},
  review={\MRref {1277761}{96a:46118}},
}

\bib{Echterhoff-Kaliszewski-Quigg-Raeburn:Categorical}{article}{
  author={Echterhoff, Siegfried},
  author={Kaliszewski, Steven P.},
  author={Quigg, John},
  author={Raeburn, Iain},
  title={A categorical approach to imprimitivity theorems for $C^*$\nobreakdash -dynamical systems},
  journal={Mem. Amer. Math. Soc.},
  volume={180},
  date={2006},
  number={850},
  pages={viii+169},
  issn={0065-9266},
  review={\MRref {2203930}{2007m:46107}},
}

\bib{Exel:TwistedPartialActions}{article}{
  author={Exel, Ruy},
  title={Twisted partial actions: a classification of regular $C^*$\nobreakdash -algebraic bundles},
  journal={Proc. London Math. Soc. (3)},
  volume={74},
  year={1997},
  number={2},
  pages={417--443},
  issn={0024-6115},
  review={\MRref {1425329}{98d:46075}},
}

\bib{Exel:PartialActionsGroupsAndInverseSemigroups}{article}{
  author={Exel, Ruy},
  title={Partial actions of groups and actions of inverse semigroups},
  journal={Proc. Amer. Math. Soc.},
  volume={126},
  year={1998},
  number={12},
  pages={3481--3494},
  issn={0002-9939},
  review={\MRref {1469405}{99b:46102}},
}

\bib{Green:Local_twisted}{article}{
  author={Green, Philip},
  title={The local structure of twisted covariance algebras},
  journal={Acta Math.},
  volume={140},
  date={1978},
  number={3-4},
  pages={191--250},
  issn={0001-5962},
  review={\MRref {0493349}{58\,\#12376}},
}

\bib{Hilsum-Skandalis:Morphismes}{article}{
  author={Hilsum, Michel},
  author={Skandalis, Georges},
  title={Morphismes \(K\)\nobreakdash -orient\'es d'espaces de feuilles et fonctorialit\'e en th\'eorie de Kasparov \textup (d'apr\`es une conjecture d'A. Connes\textup )},
  language={French, with English summary},
  journal={Ann. Sci. \'Ecole Norm. Sup. (4)},
  volume={20},
  date={1987},
  number={3},
  pages={325--390},
  issn={0012-9593},
  review={\MRref {925720}{90a:58169}},
}

\bib{Kaliszewski:Morita_twisted}{article}{
  author={Kaliszewski, Steven P.},
  title={A note on Morita equivalence of twisted $C^*$\nobreakdash -dynamical systems},
  journal={Proc. Amer. Math. Soc.},
  volume={123},
  date={1995},
  number={6},
  pages={1737--1740},
  issn={0002-9939},
  review={\MRref {1239797}{95g:46124}},
}

\bib{Landsman:Bicategories}{article}{
  author={Landsman, Nicolaas P.},
  title={Bicategories of operator algebras and Poisson manifolds},
  conference={ title={Mathematical physics in mathematics and physics}, address={Siena}, date={2000}, },
  book={ series={Fields Inst. Commun.}, volume={30}, publisher={Amer. Math. Soc.}, place={Providence, RI}, },
  date={2001},
  pages={271--286},
  review={\MRref {1867561}{2002h:46099}},
}

\bib{Leinster:Basic_Bicategories}{article}{
  author={Leinster, Tom},
  title={Basic Bicategories},
  date={1998},
  status={eprint},
  note={\arxiv {math/9810017}},
}

\bib{MacLane:Associativity}{article}{
  author={Mac Lane, Saunders},
  title={Natural associativity and commutativity},
  journal={Rice Univ. Studies},
  volume={49},
  date={1963},
  number={4},
  pages={28--46},
  issn={0035-4996},
  review={\MRref {0170925}{30\,\#1160}},
}

\bib{Mingo-Phillips:Triviality}{article}{
  author={Mingo, James A.},
  author={Phillips, William J.},
  title={Equivariant triviality theorems for Hilbert \(C^*\)\nobreakdash -modules},
  journal={Proc. Amer. Math. Soc.},
  volume={91},
  year={1984},
  number={2},
  pages={225--230},
  issn={0002-9939},
  review={\MRref {740176}{85f:46111}},
}

\bib{Muhly:BundlesGroupoids}{article}{
  author={Muhly, Paul S.},
  title={Bundles over groupoids},
  booktitle={Groupoids in analysis, geometry, and physics (Boulder, CO, 1999)},
  series={Contemp. Math.},
  volume={282},
  pages={67--82},
  publisher={Amer. Math. Soc.},
  address={Providence, RI},
  year={2001},
  review={\MRref {1855243}{2003a:46085}},
}

\bib{Muhly-Williams:Equivalence.FellBundles}{article}{
  author={Muhly, Paul S.},
  author={Williams, Dana P.},
  title={Equivalence and disintegration theorems for Fell bundles and their \(C^*\)\nobreakdash -algebras},
  journal={Dissertationes Math. (Rozprawy Mat.)},
  volume={456},
  year={2008},
  pages={1--57},
  issn={0012-3862},
  review={\MRref {2446021}{}},
}

\bib{Raeburn-Sims-Williams:Twisted_obstructions}{article}{
  author={Raeburn, Iain},
  author={Sims, Aidan},
  author={Williams, Dana P.},
  title={Twisted actions and obstructions in group cohomology},
  conference={ title={$C^*$\nobreakdash -algebras}, address={M\"unster}, date={1999}, },
  book={ publisher={Springer}, place={Berlin}, },
  date={2000},
  pages={161--181},
  review={\MRref {1798596}{2001k:46107}},
}

\bib{Yamagami:IdealStructure}{article}{
  author={Yamagami, Shigeru},
  title={On primitive ideal spaces of $C^*$\nobreakdash -algebras over certain locally compact groupoids},
  booktitle={Mappings of operator algebras (Philadelphia, PA, 1988)},
  series={Progr. Math.},
  volume={84},
  pages={199--204},
  publisher={Birkh\"auser Boston},
  address={Boston, MA},
  year={1990},
  review={\MRref {1103378}{92j:46110}},
}
  \end{biblist}
\end{bibdiv}
\end{document}